\DeclarePairedDelimiter\floor{\lfloor}{\rfloor}
\newcommand{\R}{\mathbb{R}}
\newcommand{\N}{\mathbb{N}}
\newcommand{\calJ}{\mathcal{J}}
\newcommand{\calL}{\mathcal{L}}
\newcommand{\calD}{\mathcal{D}}
\newcommand{\calH}{\mathcal{H}}
\newcommand{\calB}{\mathcal{B}}
\newcommand{\calI}{\mathcal{I}}
\newcommand{\calS}{\mathcal{S}}
\newcommand{\calR}{\mathcal{R}}
\newcommand{\spt}{\operatorname{spt}}
\newcommand{\Hd}{\dim_{\mathrm{H}}}
\newcommand{\spa}{\operatorname{span}}
\newcommand{\diam}{\operatorname{diam}}
\newcommand{\card}{\operatorname{card}}
\newcommand{\dist}{\operatorname{dist}}
\newcommand{\sgn}{\operatorname{sgn}}
\newcommand{\calC}{\mathcal{C}}
\newcommand{\calW}{\mathcal{W}}
\newcommand{\dd}{\,\mathrm{d}}
\newcommand{\m}{\mathfrak{m}}
\newcommand{\n}{\mathfrak{n}}
\numberwithin{equation}{section}
\theoremstyle{plain}
\newtheorem{thm}{Theorem}[section]
\newtheorem{conjecture}[thm]{Conjecture}
\newtheorem{lemma}[thm]{Lemma}
\theoremstyle{definition}
\newtheorem{definition}[thm]{Definition}
\newtheorem{assumption}[thm]{Assumption}
\theoremstyle{remark}
\newtheorem{remark}[thm]{Remark}
\newcommand{\nref}[1]{(\hyperref[#1]{#1})}
\begin{document}

\pagestyle{headings}

\title{A Marstrand-type restricted projection theorem in $\R^{3}$}

\author{Antti K\"aenm\"aki}
\address{Research Unit of Mathematical Sciences, University of Oulu, P.O. Box 8000, FI-90014, University of Oulu, Finland}
\email{antti.kaenmaki@oulu.fi}

\author{Tuomas Orponen}
\address{Department of Mathematics and Statistics\\ University of Jyv\"askyl\"a,
P.O. Box 35 (MaD)\\
FI-40014 University of Jyv\"askyl\"a\\
Finland}
\email{tuomas.t.orponen@jyu.fi}

\author{Laura Venieri}

\email{laura.venieri.lv@gmail.com}

\date{today}

\thanks{A.K. was partially supported by the ERC grant 306494. T.O. was supported by the Academy of Finland via grants 274512 and 309365. L.V. was supported by the Vilho, Yrj\"o ja Kalle V\"ais\"al\"a Foundation. All authors were supported by the Finnish Center of Excellence in Analysis and Dynamics Research.}
\subjclass[2010]{Primary 28A80; Secondary 28A78.}
\keywords{Projections, Hausdorff dimension, curvilinear Kakeya problems}
\date{\today}

\begin{abstract} Marstrand's projection theorem from $1954$ states that if $K \subset \R^{3}$ is an analytic set, then, for $\calH^{2}$ almost every $e \in S^{2}$, the orthogonal projection $\pi_{e}(K)$ of $K$ to the line spanned by $e$ has Hausdorff dimension $\min\{\Hd K,1\}$. This paper contains the following sharper version of Marstrand's theorem. Let $V \subset \R^{3}$ be any $2$-plane, which is not a subspace. Then, for $\calH^{1}$ almost every $e \in S^{2} \cap V$, the projection $\pi_{e}(K)$ has Hausdorff dimension $\min\{\Hd K,1\}$. For $0 \leq t < \Hd K$, we also prove an upper bound for the Hausdorff dimension of those vectors $e \in S^{2} \cap V$ with $\Hd \rho_{e}(K) \leq t < \Hd K$.
\end{abstract}

\maketitle

\tableofcontents

\section{Introduction}

The purpose of this paper is to investigate a connection between $1$-rectifiable families of projections onto lines in $\R^{3}$, and circular Kakeya problems in $\R^{2}$. The connection is not too complicated, at least on a heuristic level, but seems have gone unnoticed so far. Informally, we demonstrate that the two problems are of the same order of difficulty. The relevant circular Kakeya problem was solved by T. Wolff \cite{Wo2} in 1997. Building on his methods, we manage to gain new insight about projections.

We start by introducing the projection problem, in somewhat more generality than we will eventually need. Consider a $\calC^{2}$-curve $\gamma \colon J \to S^{2}$, where $J \subset \R$ is a bounded open interval, and $S^{2}$ is the unit sphere in $\R^{3}$. Following the framework introduced by K. F\"assler and the second author in \cite{FO}, we assume that $\gamma$ satisfies the following curvature condition:
\begin{equation}\label{curvature}
  \spa\{\gamma(\theta),\dot{\gamma}(\theta),\ddot{\gamma}(\theta)\} = \R^{3}, \qquad \theta \in J.
\end{equation}
A simple consequence of \eqref{curvature} is that $\gamma(I)$ cannot be contained in a fixed $2$-dimensional subspace for any interval $I \subset J$. The curve $\gamma$ gives rise to a $1$-rectifiable family of orthogonal projections $\rho_{\theta} \colon \R^{3} \to \R$ onto the $1$-dimensional subspaces spanned by $\gamma(\theta)$:
\begin{displaymath}
  \rho_{\theta}(z) := \gamma(\theta) \cdot z.
\end{displaymath}
It seems plausible to conjecture that a Marstrand-type projection theorem should hold for the mappings $\rho_{\theta}$. The following is the first part of \cite[Conjecture 1.6]{FO}, $\Hd$ denoting the Hausdorff dimension: 

\begin{conjecture}\label{mainC}
  Suppose that $\gamma$ is a $\mathcal{C}^2$-curve on $S^2$ satisfying \eqref{curvature} and $\rho_\theta \colon \R^3 \to \R$ is the family of orthogonal projections onto the $1$-dimensional subspaces spanned by $\gamma(\theta)$. If $K \subset \R^{3}$ is an analytic set, then $\Hd \rho_{\theta}(K) = \min\{\Hd K,1\}$ for almost every $\theta \in J$.
\end{conjecture}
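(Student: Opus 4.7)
The strategy is the classical Fourier-energy method for Marstrand-type projection theorems, supplemented by a curvilinear Kakeya input. Fix $0 < s < \min\{\Hd K, 1\}$ and, by Frostman's lemma, a Borel probability measure $\mu$ on $K$ with finite $s$-energy $I_s(\mu)$. Writing $\mu_\theta := (\rho_\theta)_\# \mu$, the conjecture reduces by a standard dualisation to the integrated energy estimate
$$\int_J I_s(\mu_\theta)\, d\theta \lesssim I_s(\mu),$$
which, via Plancherel and the identity $\widehat{\mu_\theta}(r) = \hat\mu(r\gamma(\theta))$, is a weighted $L^{2}$-bound for $\hat\mu$ restricted to the cone $\Sigma = \{r\gamma(\theta) : r \in \R,\ \theta \in J\} \subset \R^{3}$.

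Swapping the order of integration and bounding by the kernel gives the above estimate only in the range $s < 1/2$. Indeed, for a unit vector $e \in S^{2}$ normal to the osculating plane $\spa\{\gamma(\theta_{0}),\dot\gamma(\theta_{0})\}$, the map $\theta \mapsto \gamma(\theta)\cdot e$ vanishes to order two at $\theta_{0}$; condition \eqref{curvature} prevents higher vanishing, but an order-two zero already forces the kernel $\int_J |\gamma(\theta)\cdot e|^{-s}\, d\theta$ to diverge once $s \geq 1/2$. Reaching the sharp Marstrand range $s < 1$ therefore requires genuine geometric cancellation rather than a pointwise estimate.

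I would supply that cancellation via Wolff's circular Kakeya theorem from \cite{Wo2}. After a dyadic frequency decomposition and a $\delta$-discretisation in $\theta$, the $L^{2}$ bound on $\Sigma$ becomes, by the uncertainty principle, a spatial overlap estimate for thin plates in $\R^{3}$ tangent to the cone $\Sigma$. For the specific curve $\gamma(\theta) = \tfrac{1}{\sqrt{2}}(\cos\theta,\sin\theta,1)$ the cone is round, and a paraboloidal lift $\R^{2} \hookrightarrow \R^{3}$ identifies these plate arrangements with $\delta$-annular neighbourhoods of circles in $\R^{2}$. Wolff's $L^{3}$-bound for the circular maximal function then provides exactly the overlap estimate required, closing the argument in the full range $s < 1$. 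The bound on the exceptional parameter set $E_{t} = \{\theta : \Hd \rho_\theta(K) \leq t\}$ follows from the same scheme by integrating against a Frostman measure on $E_{t}$ in place of Lebesgue measure, the Kakeya input absorbing the extra dimensional factor.

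The principal obstacle I expect is the transference in the third step: correctly discretising the cone and the curve, balancing the several dyadic parameters, and recovering the precise hypothesis of Wolff's theorem with uniform constants. This step also clarifies why the plan is intrinsically tied to the round cone of the paper: for a general $\mathcal{C}^{2}$ curve $\gamma$ satisfying only \eqref{curvature}, the analogous spatial plates are tangent to a generic smooth family of curves in $\R^{2}$ for which no Kakeya-type bound is currently available. Consequently, I expect this scheme to settle the conjecture for the specific curve in the abstract, while leaving Conjecture \ref{mainC} open in full generality.
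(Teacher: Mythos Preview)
First, a framing remark: Conjecture~\ref{mainC} is stated as an open conjecture, and the paper does \emph{not} prove it in general; it proves only the special case of the curve $\gamma(\theta)=\tfrac{1}{\sqrt 2}(\cos\theta,\sin\theta,1)$ (Theorems~\ref{main} and \ref{main2}). You correctly anticipate this at the end of your proposal, so I will compare your scheme for the special curve against what the paper actually does.

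The paper's argument is entirely in physical space and contains no Fourier transform. It associates to each $z\in K$ the sine wave $\Gamma(z)=\{(\theta,\rho_\theta(z)):\theta\in J/2\}$, so that $\rho_\theta(K)$ is the vertical slice of $\bigcup_{z\in K}\Gamma(z)$ at abscissa $\theta$. The core analytic input is a measure-theoretic Schlag-type weak-type inequality for the multiplicity function $m_\delta^\mu(w)=\mu(\{z':w\in\Gamma^\delta(z')\})$ (Lemma~\ref{incidenceSineCurves}), established by transferring tangencies of sine waves to tangencies of circles (Lemma~\ref{sineCurvesToCircles}) and then invoking Wolff's \emph{combinatorial} tangency-counting lemma from \cite{Wo3}, not the $L^3$ circular maximal function bound from \cite{Wo2}. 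The dimension estimate for the exceptional set is then obtained by a direct pigeonhole and covering argument with two Frostman measures (Theorem~\ref{mainint}).

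Your route is genuinely different, and there is a gap at your third step. You assert that after discretising the cone into plates and passing to circle arrangements, ``Wolff's $L^{3}$-bound for the circular maximal function then provides exactly the overlap estimate required, closing the argument in the full range $s<1$.'' But the quantity you must control is a \emph{weighted $L^{2}$} integral of $\hat\mu$ over the cone, and an $L^{3}$ maximal inequality does not hand you that: the exponents do not match, and the weight $|r|^{s-1}$ together with the Frostman condition on $\mu$ has to be threaded through the wave-packet decomposition in a way you have not indicated. This is not a technicality to be filled in later; the Fourier-energy approach to exactly this family of projections was carried out in \cite{FO} and \cite{OO}, and with the square-function and restriction technology then available it yielded only packing-dimension conclusions or the ``$>\tfrac12$'' threshold, precisely because the passage from cone-Kakeya information to the sharp $L^{2}$ energy bound breaks down past $s=\tfrac12$. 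What the paper gains by working in physical space with the Schlag-type multiplicity lemma and Wolff's tangency-incidence bound is a way to exploit the bilinear structure (pairs of nearly tangent curves) directly, without ever having to close an $L^{2}$ Fourier estimate.
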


The curvature condition \eqref{curvature} is necessary for any positive results. For instance, the curve $\gamma(\theta) = (\cos \theta, \sin \theta,0)$ evidently fails \eqref{curvature}, and every projection $\rho_{\theta}$ maps the set $\{(0,0,r) : r \in \R\}$ onto the singleton $\{0\}$. On the other hand, the prototypical example of a curve $\gamma$ satisfying \eqref{curvature} is given by
\begin{equation}\label{specGamma}
  \gamma(\theta) = \tfrac{1}{\sqrt{2}}(\cos \theta, \sin \theta, 1), \qquad \theta \in [0,2\pi).
\end{equation}
Note that the trace of $\gamma$ lies completely on the plane $\{(x_1,x_2,\tfrac{1}{\sqrt{2}}) : x_1,x_2\in\R\}$, but intersects every $2$-dimensional subspace at most twice. The existing results about, and around, Conjecture \ref{mainC} can be summarised as follows, $K$ denoting an analytic set in $\R^{3}$:
\begin{itemize}
  \item[(i)] It is easy to prove that if $\Hd K \le \tfrac12$, then $\Hd \rho_{\theta}(K) = \min\{\Hd K,\tfrac{1}{2}\}$ for almost every $\theta \in J$; see \cite[Proposition 1.5]{FO}. 
  \item[(ii)] If $\Hd K > \tfrac{1}{2}$, then the packing dimension of $\rho_{\theta}(K)$ strictly exceeds $\tfrac{1}{2}$ for almost every parameter $\theta \in J$; see \cite[Theorem 1.7]{FO}.
  \item[(iii)] The second author \cite[Theorem 1.9]{O} proved the Hausdorff dimension analogue of the previous result, but only for the special curve \eqref{specGamma}. 
  \item[(iv)] Both papers \cite{FO} and \cite{O}, and also the paper \cite{OO} by D. Oberlin and R. Oberlin, prove analogous results for projections onto the perpendicular planes $\spa \gamma(\theta)^{\perp}$.
  \item[(v)] Very recently, C. Chen \cite[Theorem 1.3]{Ch} showed that there exist $1$-dimensional (but not $1$-rectifiable) families of lines in $\R^{3}$, which satisfy Marstrand's projection theorem in the same sense as Conjecture \ref{mainC}. We will discuss C. Chen's result a bit further in Section \ref{s:tangencyParameter} below.
\end{itemize}
The reader is also referred to \cite[Section 5.4]{Mat2} for a related discussion. Our main result in the present paper solves Conjecture \ref{mainC} for the special curve \eqref{specGamma} studied in \cite{O}.

\begin{thm}\label{main}
 Suppose that $\gamma \colon [0,2\pi) \to S^2$ is the curve satisfying \eqref{specGamma}. If $K \subset \R^{3}$ is an analytic set, then $\Hd \rho_{\theta}(K) = \min\{\Hd K,1\}$ for almost every $\theta \in [0,2\pi)$. 
\end{thm}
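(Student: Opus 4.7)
My plan is to reduce Theorem~\ref{main} to a quantitative dimensional bound on the exceptional parameter set: for every $0 < t < s \leq 1$ and every compactly supported Borel probability measure $\mu$ on $\R^{3}$ satisfying the Frostman condition $\mu(B(z,r)) \leq r^{s}$, I aim to prove that
\begin{displaymath}
  E_{t} := \{\theta \in [0,2\pi) : \Hd (\rho_{\theta})_{\sharp}\mu \leq t\}
\end{displaymath}
has Hausdorff dimension at most $1 - \eta(s,t)$ for some $\eta(s,t) > 0$. Combined with Frostman's lemma applied to $K$ and the exhaustion $t \nearrow s \nearrow \min\{\Hd K,1\}$, this immediately yields the theorem.

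To prove such a bound, I would proceed by discretization at dyadic scale $\delta = 2^{-n}$. Membership $\theta \in E_{t}$ means that $\spt \mu$ is covered by $\lesssim \delta^{-t}$ slabs of the form $\rho_{\theta}^{-1}(I)$, each of thickness $\delta$ with normal direction $\gamma(\theta)$. Pigeonholing over a candidate large subset $E' \subset E_{t}$ then produces a configuration of many such $\delta$-slabs, each carrying anomalous $\mu$-mass, whose normals are spread along the curve $\gamma$. The remaining task is an incidence bound: to control the number of $\mu$-typical $\delta$-balls in $\spt \mu$ which can lie in many of these slabs simultaneously.

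The key geometric observation, where the special form $\gamma(\theta) = \tfrac{1}{\sqrt{2}}(\cos\theta,\sin\theta,1)$ enters, is that for a fixed point $z_{0} = (a,b,c) \in \R^{3}$, the set of $(\theta,r)$ for which $z_{0} \in \rho_{\theta}^{-1}(B(r,\delta))$ is a $\delta$-neighbourhood of the sinusoid $r = \tfrac{1}{\sqrt{2}}(a\cos\theta + b\sin\theta + c)$. Under the polar-type change of variables $(\theta,r) \mapsto ((r - c/\sqrt{2})\cos\theta, (r - c/\sqrt{2})\sin\theta)$, such a sinusoid becomes a \emph{circle} through the origin in $\R^{2}$, centred at $(a,b)/(2\sqrt{2})$ with radius $\sqrt{a^{2}+b^{2}}/(2\sqrt{2})$. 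Consequently, the slab--point incidence problem in $\R^{3}$ is isomorphic to a circle--point incidence problem in $\R^{2}$, to which Wolff's $L^{3}$ maximal inequality for $\delta$-neighbourhoods of circles~\cite{Wo2} directly applies.

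The main obstacle will be the careful bookkeeping required to convert the $3$-dimensional ambient configuration into a clean $2$-dimensional circular Kakeya problem and then reverse the discretisation. One must separate the horizontal contribution of each point (where the factor $\sqrt{a^{2}+b^{2}}$ determines the relevant circle's radius) from the vertical coordinate (which only translates the sinusoid), and take care in the degenerate regime where the circle radius shrinks to zero. Combining the resulting circle--point incidence bound with a Borel--Cantelli argument across the scales $\delta = 2^{-n}$ should then yield the required estimate on $\Hd E_{t}$, with the quantitative gap $\eta(s,t) > 0$ inherited from the $L^{3}$ exponent in~\cite{Wo2}.
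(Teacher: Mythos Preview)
Your high-level architecture---reduce Theorem~\ref{main} to a dimension bound on the exceptional set via Frostman measures, discretise at scale $\delta$, and transfer the resulting incidence problem to a circular Kakeya problem---matches the paper exactly. The gap is in the transfer step.

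Your polar change of variables $(\theta,r) \mapsto ((r - c/\sqrt{2})\cos\theta, (r - c/\sqrt{2})\sin\theta)$ depends on the third coordinate $c$ of the point $z_{0}=(a,b,c)$. It therefore straightens \emph{one} sinusoid to a circle through the origin, but it is not a single map on the $(\theta,r)$-plane that sends \emph{all} the curves $\Gamma(z)$, $z\in K$, to circles simultaneously. Under any fixed shift $r\mapsto r-c_{0}/\sqrt{2}$ followed by polar coordinates, the curve $\Gamma(a,b,c)$ becomes the lima\c{c}on $\rho = \tfrac{1}{\sqrt{2}}(a\cos\theta+b\sin\theta)+\tfrac{c-c_{0}}{\sqrt{2}}$, which is a circle only when $c=c_{0}$. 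If instead you allow the shift to vary with $z$, then all points $(a,b,c)$ with the same $(a,b)$ collapse to the same circle, so the third coordinate---and with it the full $3$-dimensional structure of $K$---is lost. Either way, the problem is not isomorphic to a standard planar circle-incidence problem, and Wolff's $L^{3}$ circular maximal inequality from \cite{Wo2} does not apply directly.

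The paper avoids any global coordinate change. Instead, to each $z=(x,r)\in B_{0}$ it associates the genuine circle $S(x,r)$ (a three-parameter family), and shows in Section~\ref{geomInterpretation} and Lemma~\ref{sineCurvesToCircles} that the \emph{tangency parameters} are comparable: $\Delta_{J}(z_{1}-z_{2})$ for the sine waves controls $\Delta'(z_{1}-z_{2})=||x_{1}-x_{2}|-|r_{1}-r_{2}||$ for the circles, and the location of near-tangency transfers as well. This lets one import Wolff's tangency-counting lemma \cite[Lemma~1.4]{Wo3} (not the $L^{3}$ bound of \cite{Wo2}) in its measure-theoretic form, Lemma~\ref{wolffMeasures}, into a Schlag-type weak-type inequality for sine waves, Lemma~\ref{incidenceSineCurves}. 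That lemma then plugs into the pigeonholing argument of Theorem~\ref{mainint} exactly as you outline.
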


In fact, we derive Theorem \ref{main} from the more precise result below:

\begin{thm}\label{main2}
  Suppose that $\gamma \colon [0,2\pi) \to S^2$ is the curve satisfying \eqref{specGamma}. If $K \subset \R^{3}$ is an analytic set with $0 < \Hd K \le 1$ and $0 \leq t < \Hd K$, then $\Hd \rho_{\theta}(K) \ge t$ for all $\theta \in [0,2\pi) \setminus E$, where
  \begin{displaymath}
      \Hd E \leq \frac{\Hd K + t}{2\Hd K} < 1.
  \end{displaymath}
\end{thm}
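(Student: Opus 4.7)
The plan is to prove Theorem \ref{main2} in its contrapositive form, via a standard Frostman / potential-theoretic argument whose quantitative core is Wolff's circular Kakeya bound from \cite{Wo2}. Suppose $\Hd E > (\Hd K + t)/(2\Hd K)$, and fix parameters $s \in (t, \Hd K)$ close to $\Hd K$ together with $u \in ((s+t)/(2s), \Hd E)$. By Frostman's lemma, there are Borel probability measures $\mu$ on $K$ and $\sigma$ on $E$ with
\begin{displaymath}
  \mu(B(x,r)) \lesssim r^{s}, \qquad \sigma(B(\theta, r)) \lesssim r^{u}.
\end{displaymath}
The aim is to establish finiteness of the averaged Riesz energy
\begin{displaymath}
  \int_{E} I_{t'}(\rho_{\theta}\mu) \, d\sigma(\theta) < \infty \qquad \text{for some } t' > t,
\end{displaymath}
which forces $\Hd \rho_{\theta}(K) \geq t' > t$ for $\sigma$-almost every $\theta \in E$, contradicting the definition of $E$. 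Letting $s \nearrow \Hd K$ afterwards recovers the stated exponent.

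Expanding the energy and substituting $v = x - y$, the above integral equals $\iint K_{\sigma, t'}(x - y) \, d\mu(x) \, d\mu(y)$ with $K_{\sigma, t'}(v) := \int |\gamma(\theta)\cdot v|^{-t'} \, d\sigma(\theta)$. Since $\sqrt{2}\,\gamma(\theta) \cdot v = v_{1}\cos\theta + v_{2}\sin\theta + v_{3}$, the level set $\{\theta : |\gamma(\theta)\cdot v| < \delta\}$ consists exactly of those directions $e_{\theta} = (\cos\theta, \sin\theta)$ for which the line $\{w \in \R^{2} : w\cdot e_{\theta} = -v_{3}\}$ lies within distance $\sqrt{2}\,\delta$ of the point $(v_{1}, v_{2})$ --- equivalently, is $\sqrt{2}\,\delta$-tangent to the circle of radius $|v_{3}|$ centred at $(v_{1}, v_{2})$. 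Triples $(x, y, \theta)$ with $|\gamma(\theta)\cdot(x-y)|$ small therefore encode $\delta$-tangency incidences between lines and circles in $\R^{2}$, which is precisely the setting of \cite{Wo2}. By a layer-cake expansion it then suffices to establish a dyadic joint level-set estimate
\begin{displaymath}
  (\mu \times \mu \times \sigma)\bigl\{ (x, y, \theta) : |\gamma(\theta)\cdot(x - y)| < \delta \bigr\} \lesssim \delta^{\alpha}
\end{displaymath}
for some $\alpha > t'$. One would do this by splitting $\R^{3}$ into a \emph{transverse} region $\{v_{3}^{2} \neq v_{1}^{2} + v_{2}^{2}\}$, where the derivative $\partial_{\theta}(\gamma(\theta)\cdot v)$ is non-degenerate and a Frostman bound on $\sigma$ already yields an acceptable $\alpha$, and a \emph{tangent} neighbourhood of the cone $\{v_{3}^{2} = v_{1}^{2} + v_{2}^{2}\}$, where the derivative vanishes and Wolff's $3/2$-power incidence bound supplies the extra $\delta^{1/2}$-type gain arising from the square-root vanishing at the critical point.

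The principal obstacle is the tangent regime. Wolff's theorem is a combinatorial statement about $\delta$-separated families of circles and tangent lines in $\R^{2}$, whereas $\mu$ and $\sigma$ are arbitrary Frostman measures. One therefore pigeonholes them into $\delta$-separated subfamilies of circles (with centres $(v_{1}, v_{2})$ and radii $|v_{3}|$ distributed according to $\mu \times \mu$) and lines (parametrised by $\sigma$), applies \cite{Wo2} at each dyadic scale, and re-sums. Balancing the transverse and tangent contributions and optimising over the additional scale $\tau$ measuring distance to the tangency cone should then produce the exponent $(s+t)/(2s)$: the denominator $2s$ reflects the joint interplay of the $s$-dimensional mass of $\mu$ on both centres $(v_{1}, v_{2})$ and radii $|v_{3}|$ with the square-root tangency gain. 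Carrying out this bookkeeping sharply enough to capture the exponent $(s+t)/(2s)$ without loss is the technical heart of the argument.
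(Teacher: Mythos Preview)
Your outline is a plausible high-level plan, but it is not a proof: as you yourself write, ``carrying out this bookkeeping sharply enough to capture the exponent $(s+t)/(2s)$ without loss is the technical heart of the argument,'' and that heart is entirely absent. The paper shows that this bookkeeping is genuinely difficult and occupies most of its length.

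Two concrete gaps. First, the tool you cite is not quite the right one. The circular Kakeya result of \cite{Wo2} says that a set containing a circle of every radius has full dimension; what is actually needed here is Wolff's \emph{tangency counting lemma} from \cite{Wo3} (Lemma~1.4 there, Lemma~\ref{mainWolff} in the paper), which bounds the number of pairwise incomparable $(\delta,t)$-rectangles tangent to many circles from a bipartite family. This is a much finer statement, and the paper moreover has to extend it from counting measure to arbitrary Frostman measures (Lemma~\ref{wolffMeasures}) before it can be used.

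Second, and more seriously, the energy approach you propose does not obviously close. In the tangent regime the sublevel set $\{\theta:|\gamma(\theta)\cdot v|<\delta\}$ has length $\sim\sqrt{\delta/|v|}$, so the naive bound on $K_{\sigma,t'}(v)$ is only $|v|^{-t'/2}$ times a $\sigma$-dependent factor; this is exactly why the elementary argument stalls at dimension~$\tfrac12$ (see \cite[Proposition~1.5]{FO} and the discussion in Section~\ref{s:tangencyParameter}). Your plan to recover the loss by ``pigeonholing into $\delta$-separated subfamilies and applying \cite{Wo2}'' hides the real obstacle: the contribution of pairs $(x,y)$ with $x-y$ near the cone $\{|v_3|=|(v_1,v_2)|\}$ cannot be controlled by a single-scale incidence bound, because $\mu\times\mu$ may concentrate near that cone in a way that interacts badly with the square-root degeneracy. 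The paper handles this not through energies at all, but by proving a Schlag-type weak $L^\infty$ multiplicity bound first for circles (Lemma~\ref{incidenceCircles}) via an induction on scales, then transferring it to the sine waves $\Gamma(z)$ (Lemma~\ref{incidenceSineCurves}) through the geometric comparison of Section~\ref{tangencySection}; the exceptional-set estimate then follows by a covering/pigeonhole argument on vertical slices (Theorem~\ref{mainint}). None of this machinery is visible in your sketch, and without it the tangent-region estimate remains unproved.
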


\begin{remark}\label{rem1} We note that Theorems \ref{main} and \ref{main2} remain true with the special curve $\gamma$ replaced by any non-degenerate circle of the form $\eta = V \cap S^{2}$, where $V \subset \R^{3}$ is a $2$-plane, which is not a subspace. This version of the results was mentioned in the abstract. Clearly, one may assume that $V$ is a parallel to the $xy$-plane, at height $h \in (-1,1) \setminus \{0\}$, and then $\eta$ can be parametrised by $\eta(\theta) = (c_{h}\cos \theta,c_{h}\sin \theta, h)$, with $c_{h} = \sqrt{1 - h^{2}}$. Consequently,
\begin{displaymath} \eta(\theta) \cdot (z_{1},z_{2},z_{3}) = (c_{h}z_{1} \cos \theta + c_{h}z_{2} \sin \theta + hz_{3}) = 2^{1/2} [\gamma(\theta) \cdot (c_{h}z_{1},c_{h}z_{2},hz_{3})]. \end{displaymath}
This shows the the projections of any set $K \subset \R^{3}$ to the lines spanned by $e \in V \cap S^{2}$ are (up to scaling by $2^{1/2}$) the same as the projections of the $K_{h} = \{(c_{h}z_{1},c_{h}z_{2},hz_{3}) : (z_{1},z_{2},z_{3}) \in K\}$ to the lines spanned by $e \in \gamma$. Now, it remains to note that $\Hd K_{h} = \Hd K$ for any $h \in (-1,1) \setminus \{0\}$, and apply Theorems \ref{main} and \ref{main2}. \end{remark}

\begin{remark} Theorem \ref{main} has been recently applied to solve a variant of the Kakeya problem in the first Heisenberg group $\mathbb{H}$, see \cite{Li}. The author of \cite{Li} proves that if $K \subset \mathbb{H}$ is an arbitrary set containing a horizontal line segment of every orientation, then the (Heisenberg) Hausdorff dimension of $K$ is no smaller than $3$. \end{remark}

Let us next examine how Conjecture \ref{mainC} is connected to curvilinear Kakeya problems in $\R^2$. The circular Kakeya problem asks how large is the Hausdorff dimension of a planar set $B$ which contains a circle of every radius. In 1994, L. Kolasa and T. Wolff \cite{KW} first proved that $\Hd B \geq 11/6$, and in 1997, T. Wolff \cite{Wo2} obtained the optimal result $\Hd B = 2$. The paper \cite{KW} also contains the $11/6$-result for sets containing "generalised circles of every radius" (we refer the reader to \cite{KW} for the precise definitions). The optimal result $\Hd B = 2$ in this setting was obtained quite recently by J. Zahl \cite{Za}.

A natural generalisation of the problem above is the following. A circle $S(x,r) \subset \R^{2}$ determines uniquely its own radius and centre, so the points in $\R^{2} \times \R_{+}$ are in one-to-one correspondence with planar circles. Thus, we can say that a family $\calS$ of planar circles is compact (or Borel, analytic, or $s$-dimensional), if the corresponding pairs $(x,r) \in \R^{2} \times \R_{+}$ form a compact (or respectively Borel, analytic, or $s$-dimensional) subset of $\R^{3}$. We denote the Hausdorff dimension of a circle family $\calS$ by $\Hd \calS := \Hd \{(x,r) \in \R^{3} : S(x,r) \in \calS\}$. Thus, by definition, each family $\calS$ of circles containing a circle of every radius evidently satisfies $\Hd \calS \geq 1$. 

Now, assume that $\calS$ is an analytic family of circles. What can be said about the dimension of $\cup \calS := \bigcup_{S \in \calS} S$? The answer does not appear to be stated explicitly in the literature, but the existing methods yield $\Hd \cup \calS = \min\{\Hd \calS + 1,2\}$ in this situation. We were informed by A. M\'ath\'e that this follows from a slight generalisation of Theorem 2.9 in T. Keleti's survey \cite{Ke}, combined with Corollary 3 in T. Wolff's deep paper \cite{Wo3}. Already in Wolff's earlier paper \cite[Appendix A]{Wo2}, he proved a slightly weaker variant: if the set of centres of the circles in $\mathcal{S}$ has dimension $\alpha \in (0,1]$, then $\Hd \cup \mathcal{S} \geq 1 + \alpha$.

As a corollary of the techniques in the present paper, we are able to give an elementary proof (avoiding the techniques of \cite{Wo3}) of the full result:

\begin{thm}\label{circleUnion}
  If $\calS$ is an analytic family of planar circles, then $\Hd \cup \calS = \min\{\Hd \calS + 1,2\}$.
\end{thm}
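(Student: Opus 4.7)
The upper bound $\Hd \cup \calS \leq \min\{\Hd \calS + 1, 2\}$ is standard: writing $\tilde \calS \subset \R^2 \times (0, \infty) \subset \R^3$ for the parameter set of $\calS$, the incidence set $\{((x,r), y) \in \tilde \calS \times \R^2 : y \in S(x,r)\}$ fibres over $\tilde \calS$ in circles, so its Hausdorff dimension equals $\Hd \calS + 1$, and $\cup \calS$ is its Lipschitz image in $\R^2$. The content of the theorem is the lower bound.

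My plan is to exploit a point--circle duality that recasts the lower bound as a restricted slab (projection) estimate in $\R^3$. Reparameterise circles by $(x, u) \in \R^3$ with $u := |x|^2 - r^2$, a map that is bi-Lipschitz on $\{r \geq r_0\}$ for every $r_0 > 0$; after exhausting in $r_0$, one may assume $\calS$ corresponds to a Borel set $K \subset \R^3$ with $\Hd K = \Hd \calS$. Expanding $|y - x|^2 = r^2$ yields the duality
\begin{equation*}
    y \in S(x, r) \iff (x, u) \in P_y, \qquad P_y := \{(x, u) \in \R^3 : u = 2 y \cdot x - |y|^2\},
\end{equation*}
which puts $\cup \calS$ in bijection with $\{y \in \R^2 : K \cap P_y \neq \emptyset\}$. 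Each $P_y$ is an affine hyperplane with unit normal $n_y := (-2 y, 1)/\sqrt{1 + 4|y|^2} \in S^2$, and for every fixed $\lambda > 0$ the $1$-parameter curve $\theta \mapsto n_{\lambda(\cos\theta, \sin\theta)}$ lies on a horizontal circle on $S^2$ that is (after an invertible linear change of coordinates in $\R^3$) the curve $\gamma$ of \eqref{specGamma}; in particular, it satisfies the curvature condition \eqref{curvature}.

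Let $\mu$ be a Frostman measure on $K$ of exponent $s := \Hd K - \epsilon$, and define a measure $\nu$ on $\cup \calS$ by pushing $\mu \otimes d\theta$ forward through $((x,u), \theta) \mapsto x + r(\cos\theta, \sin\theta)$. A direct computation shows
\begin{equation*}
    \nu(B(y, \delta)) \lesssim \delta \cdot \mu(B(P_y, \delta)) \qquad \text{for small } \delta > 0,
\end{equation*}
where $B(P_y, \delta)$ is the $\delta$-neighbourhood of $P_y$. Granted the \emph{restricted slab estimate}
\begin{equation}\label{eq:slabPlan}
    \mu(B(P_y, \delta)) \lesssim \delta^{\min\{s, 1\}-\epsilon} \qquad \text{for $\nu$-a.e.\ } y \in \R^2
\end{equation}
($\nu$-a.e., rather than $\calL^2$-a.e., is sufficient since $\nu$ is the measure whose dimension we want to lower-bound), it would follow that $\nu$ is Frostman of exponent $\min\{s+1, 2\} - O(\epsilon)$, and letting $\epsilon \to 0$ gives $\Hd \cup \calS \geq \min\{\Hd \calS + 1, 2\}$.

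The main obstacle is \eqref{eq:slabPlan}. Since $B(P_y, \delta)$ is a $\delta$-slab orthogonal to $n_y$, the estimate \eqref{eq:slabPlan} is precisely a Frostman-type bound for the family of orthogonal projections $\rho_{n_y}$ onto the lines $\spa n_y$. For each fixed $\lambda > 0$ the $1$-parameter family $\theta \mapsto \rho_{n_{\lambda(\cos\theta, \sin\theta)}}$ satisfies \eqref{curvature}, hence falls within the scope of Theorem \ref{main2}. Applying Theorem \ref{main2} --- crucially, the \emph{quantitative} exceptional-set bound, not merely the a.e.-statement --- on each $\lambda$-circle and then integrating in $\lambda \in (0, \infty)$ via Fubini in polar coordinates should produce \eqref{eq:slabPlan}. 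The $\lambda$-integration is where the exceptional-set bound is essential: it converts the one-parameter $\theta$-exceptional statements into an honest $2$-dimensional almost-everywhere statement in $y$. This is the step at which the techniques developed for Theorem \ref{main2} are used in a fundamental way; the rest of the argument is an incidence-set and Frostman-measure bookkeeping.
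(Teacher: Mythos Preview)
Your point--circle duality is correct and the identification of the curve $\theta \mapsto n_{\lambda(\cos\theta,\sin\theta)}$ with an affine image of $\gamma$ is valid. The gap is in the passage from Theorem \ref{main2} to the slab estimate \eqref{eq:slabPlan}. Theorem \ref{main2} asserts that $\Hd \rho_\theta(K) \geq t$ for most $\theta$; this is a statement about the \emph{image set} $\rho_\theta(K)$, not about the \emph{pushforward measure} $(\rho_\theta)_\ast\mu$. Your slab estimate $\mu(B(P_y,\delta)) \lesssim \delta^{\min\{s,1\}-\epsilon}$ is exactly a Frostman bound for $(\rho_{n_y})_\ast\mu$ at a single point, and this does not follow from a dimension lower bound on $\rho_{n_y}(K)$: the projection can have full dimension while the pushforward measure is a Dirac mass. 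To get measure-level information you would need the multiplicity lemma (Lemma \ref{incidenceSineCurves} or \ref{incidenceCircles}) rather than Theorem \ref{main2}, at which point the duality detour is superfluous.

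There is a second gap in the ``integrate over $\lambda$'' step. Even granting a slab estimate for all $\theta \notin E_\lambda$ on each circle $|y|=\lambda$, Fubini in polar coordinates yields only a Lebesgue-a.e.\ statement in $y$, since $|E_\lambda|=0$ for each $\lambda$; the quantitative bound $\Hd E_\lambda < 1$ adds nothing here. But you need the estimate $\nu$-a.e., and $\nu$ is typically singular with respect to Lebesgue measure. The disintegration of $\nu$ over the circles $|y|=\lambda$ need not be absolutely continuous with respect to arc-length, so a set $E_\lambda$ of dimension strictly less than $1$ can still carry full $\nu$-mass on that circle. The exceptional-set bound from Theorem \ref{main2} simply does not control $\nu(\bigcup_\lambda \{\lambda e_\theta : \theta \in E_\lambda\})$.

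For comparison, the paper does not route through the sine-wave theorem at all. It proves a circle analogue of Theorem \ref{mainint} directly (Lemma \ref{circleSlices}), using the circle multiplicity lemma (Lemma \ref{incidenceCircles}): almost every \emph{vertical line} $L$ satisfies $\Hd(L \cap \cup\calS) \geq \min\{\Hd\calS,1\}$, and then a standard product-dimension inequality (\cite[Theorem 5.8]{Fa}) gives $\Hd\cup\calS \geq \min\{\Hd\calS,1\}+1$. Slicing by lines works here precisely because the argument stays at the level of incidence/multiplicity estimates and never passes through a projection theorem.
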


We should perhaps emphasise that even though Theorem \ref{circleUnion} can be deduced from \cite{Wo3} via \cite[Theorem 2.9]{Ke}, the same is not true of Theorem \ref{main}, as far as we know. 

Let us finally explain the connection to Conjecture \ref{mainC}. Let $\gamma \colon J \to S^{2}$ be a curve satisfying the non-degeneracy hypothesis \eqref{curvature}. For each $z \in \R^{3}$, consider the planar curve
\begin{displaymath}
  \Gamma(z) := \{(\theta,\rho_{\theta}(z)) : \theta \in J\}.
\end{displaymath}
For the special curve $\gamma(\theta) = \tfrac{1}{\sqrt{2}}(\cos \theta,\sin \theta,1)$ and $z = (x_1,x_2,r) \in \R^{3}$, the set $\Gamma(z)$ is the graph of the function $x_1\cos \theta + x_2\sin \theta + r$ defined on $[0,2\pi)$; we will often refer to these curves as "sine waves". As one shifts $z$ around in $\R^{3}$, the wave $\Gamma(z)$ changes. Note that the same is not true for the degenerate curve $\gamma(\theta) = (\cos \theta, \sin \theta,0)$, as $\Gamma(x_1,x_2,r)$ is then independent of $r$. 

The reader should now think that $\Gamma(z)$ is a "circle" parametrised by $z$. If $K \subset \R^{3}$ is an analytic set, then one might expect, based on Theorem \ref{circleUnion}, that the union $\bigcup_{z\in K}\Gamma(z)$ has Hausdorff dimension $\min\{\Hd K + 1,2\}$. The crucial observation here is that if $L_\theta = \{\theta\}\times\R \subset \R^2$ is the vertical line at $\theta$, then the vertical intersections
\begin{equation}\label{slicesEq}
  L_{\theta} \cap \bigcup_{z \in K} \Gamma(z) = \{(\theta,\rho_{\theta}(z)) : z \in K\}, \qquad \theta \in J,
\end{equation}
are isometric to the projections $\rho_{\theta}(K)$. Thus, if the union $\bigcup_{z \in K} \Gamma(z)$ is $s$-dimensional, with $s \geq 1$, then, by a Fubini-type argument, many projections $\rho_{\theta}(K)$ should have dimension $s - 1$. Strictly speaking this is not correct, since there is no such Fubini theorem for the Hausdorff dimension. Regardless, this gives a reasonable heuristic why Conjecture \ref{mainC} should hold for the projections $\rho_{\theta}$.

Our main result, Theorem \ref{main}, makes the above heuristic rigorous for the curve $\gamma(\theta) = \tfrac{1}{\sqrt{2}}(\cos \theta,\sin \theta,1)$. Observe also that, as an immediate corollary of Theorem \ref{main} and \eqref{slicesEq}, the union $\bigcup_{z \in K} \Gamma(z)$ has Hausdorff dimension $\min\{\Hd K + 1,2\}$; this corresponds to Theorem \ref{circleUnion} for the waves $\Gamma(z)$.

\subsection{Further directions}

It seems plausible that the strategy in this paper, combined with the "cinematic curvature" machinery developed by L. Kolasa and T. Wolff \cite{KW} and J. Zahl \cite{Za1,Za}, could be stretched to prove Conjecture \ref{mainC} for all curves satisfying \eqref{curvature}. There are several technical obstacles, however. One is quite simply verifying (rigorously) the "cinematic curvature hypothesis", see \cite[page 124]{KW}, for the relevant curves, and making sure that the tangency parameter "$\Delta$" in \cite{KW} coincides with the one we introduce in this paper. Another obstacle is verifying that \cite[Lemma 11]{Za} works under the assumption that the "generalised circles" in question are merely $\delta$-separated (and not necessarily $\delta$-separated in the radial variable); this would be needed for the generalised version of Lemma \ref{wolffMeasures} below. J. Zahl [personal communication] has informed us that the proof of \cite[Lemma 11]{Za} does not really rely on the radial separation, but verifying this carefully would result in a fairly long paper.

Another natural question arising from Theorem \ref{main} is the following: if $\Hd K > 1$, then is it true that $\mathcal{H}^{1}(\rho_{\theta}(K)) > 0$ for almost every $\theta \in [0,2\pi)$? This seems plausible, but does not follow from the method of this paper. Given the analogy with circle packing problems discussed above, this result would correspond to the fact that $\Hd \mathcal{S} > 1$ implies $\mathcal{L}^{2}(\cup \mathcal{S}) > 0$. This result established by Wolff \cite{Wo3} in 2000. It requires a combination of Fourier-analytic techniques with the incidence geometric ideas behind Theorem \ref{circleUnion}.

\subsection{Notation}\label{s:notation}

We generally denote points of $\R^{3}$ by $z,z'$, and points in $\R^{2}$ by $x,y$. A closed ball of radius $r > 0$ and centre $z \in \R^{d}$ is denoted by $B(z,r)$. A planar circle of radius $r > 0$ and centre $x \in \R^{2}$ is denoted by $S(x,r)$.

For $A,B > 0$, we use the notation $A \lesssim_{p} B$ to signify that there exists a constant $C \geq 1$, depending only on the parameter $p$, such that $A \leq CB$. If no "$p$" is specified, then the constant $C$ is absolute. We abbreviate the two-sided inequality $A \lesssim_{p} B \lesssim_{q} A$ by $A \sim_{p,q} B$. In general, the letter "$C$" stands for a large constant, whose value may change from line to line inside the proofs. More essential constants will be indexed $C_{1},C_{2},\ldots$ In addition to the "$\lesssim$" notation, we will also need the "$\lessapprox$" notation: this notation is always associated with a "scale" parameter $\delta \in (0,1]$, which will be clear from context. Given this parameter $\delta$, the notation $A \lessapprox B$ means that there exists an absolute constant $C \geq 1$ such that $A \leq C(\log(1/\delta))^{C} B$. In this paper, "$\log$" refers to logarithm of base $2$. The two-sided inequality $A \lessapprox B \lessapprox A$ is abbreviated to $A \approx B$.

The notation $\calH^{s}$ stands for the $s$-dimensional Hausdorff measure, and $\mathcal{H}^{s}_{\infty}$ stands for $s$-dimensional Hausdorff content. The notation $|\cdot |$ can refer to the norm of a vector, or the Lebesgue measure, or the counting measure, depending on the context. 

\subsection*{Acknowledgement} We are grateful to anonymous referees for reading the paper very carefully, and for providing a large number of helpful comments and small corrections. 

\section{The tangency parameter}\label{s:tangencyParameter}

A great deal of what follows has nothing to do with the curve $\gamma(t) = \tfrac{1}{\sqrt{2}}(\cos t,\sin t,1)$, and would work equally well under the general curvature hypothesis \eqref{curvature}. For the moment, we fix any $\calC^{2}$-curve $\gamma \colon J \to S^{2}$ satisfying the curvature condition \eqref{curvature} on $J$. For convenience, we also assume that $\gamma$, $\dot{\gamma}$, and $\ddot{\gamma}$ extend continuously to the closure $\overline{J}$, and \eqref{curvature} holds on $\overline{J}$. 

To motivate the following definitions, we recall a part of Marstrand's classical projection theorem in $\R^{3}$; see \cite{Mar}. Let $e \in S^{2}$, and let $\pi_{e} \colon \R^{3} \to \R$ be the orthogonal projection onto the line spanned by $e$, that is, $\pi_{e}(x) = x \cdot e$. If $K \subset \R^{3}$ is analytic, then Marstrand's classical projection theorem guarantees that $\mathcal{H}^2|_{S^2}$ almost every projection $\pi_{e}(K)$ satisfies $\Hd \pi_{e}(K) = \min\{\Hd K,1\}$. A fundamental ingredient in the proof of this result is the following estimate:
\begin{equation}\label{sublevel}
  \mathcal{H}^2(\{e \in S^{2} : |\pi_{e}(z)| \leq \delta\}) \lesssim \delta/|z|, \qquad z \in \R^{3} \setminus \{0\}.
\end{equation}
In fact, whenever \eqref{sublevel} holds for a (non-trivial) measure $\sigma$ on $S^{2}$, then the usual proof of Marstrand's theorem works for this measure $\sigma$. In \cite{Ch}, C. Chen found that there are $\alpha$-Ahlfors-David regular measures $\sigma$ on $S^{2}$ with $\alpha$ arbitrarily close to $1$, which satisfy \eqref{sublevel}. 

The main difficulty in dealing with the projections $\rho_{\theta}(z) = \gamma(\theta) \cdot z$, $\theta \in J$, is that non-trivial measures on the curve $\gamma \subset S^{2}$ fail to satisfy \eqref{sublevel} (here we also use "$\gamma$" to denote the trace of $\gamma$). In fact, the length measure $\sigma = \calH^{1}|_{\gamma}$ only satisfies the uniform bound \eqref{sublevel} with the right hand side replaced by $(\delta/|z|)^{1/2}$; see \cite[proof of Lemma 3.1]{FO}. As a corollary, the projections $\rho_{\theta}$ conserve almost surely the dimension of at most $\tfrac{1}{2}$-dimensional analytic sets; see \cite[Proposition 1.5]{FO}.

The above explanation implies that, if one wants to consider sets of dimension higher than $\tfrac{1}{2}$, more careful analysis is required. Heuristically, the main observation here is that even though the best possible \textbf{uniform} estimate in \eqref{sublevel} is too weak for our purposes, a much stronger bound holds for "most" points $z \in \R^{3}$. For example, consider the projections $\rho_{\theta}$ associated with the special curve $\gamma(\theta) = \tfrac{1}{\sqrt{2}}(\cos \theta, \sin \theta, 1)$. If $z = (0,0,r)$ with $|r| \sim 1$, then $|\rho_{\theta}(z)| = \tfrac{|r|}{\sqrt{2}} \gtrsim 1$ for all $\theta \in [0,2\pi)$. In particular, the dangerous set on the left hand side of \eqref{sublevel} is empty altogether for $\delta > 0$ sufficiently small.

For each $z \in \R^{3} \setminus \{0\}$, the decay of $\calH^{1}(\{\theta \in J : |\rho_{\theta}(z)| \leq \delta\})$ depends on the maximum order of zeros of the real function
\begin{displaymath}
  \theta \mapsto \rho_{\theta}(z).
\end{displaymath}
As we just saw, the function need not have any zeros, but it can easily have zeros of either first or second order. Third order zeros are ruled out by the curvature condition \eqref{curvature}. If the zeros had order at most one, then \eqref{sublevel} would hold, and hence the second order zeros are revealed as the main adversary. So, when do second order zeros occur? Recall that $\rho_{\theta}(z) = \gamma(\theta) \cdot z$. Hence, $\rho_{\theta}(z) = 0 = \partial_{\theta} \rho_{\theta}(z)$, if and only if $z \perp \gamma(\theta)$ and $z \perp \dot{\gamma}(\theta)$. This is further equivalent to 
\begin{displaymath}
  \pi_{V_{\theta}}(z) = 0,
\end{displaymath}
where $V_{\theta} = \spa\{\gamma(\theta),\dot{\gamma}(\theta)\}$ and $\pi_{V_\theta}$ is the orthogonal projection onto the plane $V_\theta$. So, the function $\theta \mapsto \rho_{\theta}(z)$ has a second order zero at some $\theta \in \overline{J}$, if and only if
\begin{equation}\label{form77}
  \Delta(z) := \min_{\theta \in \overline{J}} |\pi_{V_{\theta}}(z)| = 0.
\end{equation}
The quantity $\Delta(z)$ is the \emph{tangency parameter} of $\gamma$ at $z$. In practice, "almost" second order zeros are also a challenge in the proofs below. It turns out that the size of $\Delta(z)$ is a good tool for quantifying the word "almost".

\subsection{Geometric interpretation of the tangency parameter}\label{geomInterpretation}

Condition \eqref{form77} tells us when second order zeros occur, but we will now give a more geometric characterisation. We only consider the special curve $\gamma(\theta) = \tfrac{1}{\sqrt{2}}(\cos \theta,\sin \theta,1)$. By a straightforward calculation, we see that $\dot{\gamma}(\theta) = \tfrac{1}{\sqrt{2}}(-\sin \theta, \cos \theta,0)$ and 
\begin{displaymath}
  \eta(\theta) := \gamma(\theta) \times \dot{\gamma}(\theta) = -\tfrac{1}{2}(\cos \theta,\sin \theta,-1), \quad \theta \in [0,2\pi).
\end{displaymath}
Thus, $\pi_{V_{\theta}}(z) = 0$, if and only if $z$ is parallel to $\ell_{\theta} := \spa\{\eta(\theta)\} = V_{\theta}^{\perp}$, and hence $\Delta(z) = 0$, if and only if\footnote{It may seem like a natural question, whether we could now prove Theorem \ref{main} separately for sets lying on $\calC$, and sets avoiding $\calC$. Unfortunately, the classical proof of Marstrand's theorem requires \eqref{sublevel} to hold for all $z = z_{1} - z_{2} \in (K - K) \setminus \{0\}$, and not just $z \in K \setminus \{0\}$. So, the classical proof would work for such sets $K$, where every non-zero vector in $K - K$ forms an angle $\epsilon > 0$ with the conical surface $\calC$. It would be interesting to understand the structure of such sets.}
\begin{equation}\label{form36a}
  z \in \calC := \bigcup_{\theta \in [0,2\pi)} \ell_{\theta} = \{(x,r) \in \R^{3} : |x| = |r|\}.
\end{equation}
Here $\Delta(z)$ is defined as in \eqref{form77}, with $J = [0,2\pi)$. There is another interesting (and useful) interpretation for $\Delta(z)$. Pick $\theta \in [0,2\pi)$ such that 
\begin{displaymath}
  \dist(z,\ell_{\theta}) = |\pi_{V_{\theta}}(z)| = \Delta(z).
\end{displaymath}
Then, pick $z' = (y,s) \in \ell_{\theta}$ with $|z - z'| = \Delta(z)$, and note that $|y| = |s|$ by \eqref{form36a}. Write $z = (x,r)$. Since $|x - y| \leq \Delta(z)$ and $|r - s| \leq \Delta(z)$, we infer that
\begin{equation}\label{form40a}
  \Delta'(z) := ||x| - |r|| \leq |x - y| + |r - s| \leq 2\Delta(z).
\end{equation} 
We also note that a converse to \eqref{form40a} holds. Fix $z = (x,r) \in \R^{3}$, and let $x = (r' \cos \theta,r' \sin \theta)$ in polar coordinates with $\theta \in [0,2\pi)$ and $r' = |x| \geq 0$. We note that $(|r| \cos \theta, |r| \sin \theta, r) \in \calC$, and
\begin{displaymath}
  |z - (|r| \cos \theta, |r| \sin \theta, r)| = |r' - |r|| = ||x| - |r|| = \Delta'(z).
\end{displaymath} 
This means that $z$ is at distance $\Delta'(z)$ from one of the lines $\ell_{\theta} = V_{\theta}^{\perp}$ on $\calC$, and hence
\begin{equation}\label{deltaPrimeA}
  \Delta(z) \leq \Delta'(z).
\end{equation}
Consequently, by \eqref{form40a} and \eqref{deltaPrimeA}, the numbers $\Delta(z)$ and $\Delta'(z)$ are comparable, and $\Delta(z) = 0$, if and only if $\Delta'(z) = 0$. This is useful, because the number $\Delta'(z)$ plays a major role in Wolff's investigation of circular Kakeya problems; see for instance \cite[Lemma 3.1]{Wo}. If $z_{1} = (x_{1},r_{1}),z_{2} = (x_{2},r_{2}) \in \R^{3}$ are distinct points with $r_{1},r_{2} \geq 0$, then 
\begin{displaymath}
  0 = \Delta'(z_{1} - z_{2}) = ||x_{1} - x_{2}| - |r_{1} - r_{2}||,
\end{displaymath}
if and only if the planar circles $S(x_{1},r_{1})$ and $S(x_{2},r_{2})$ are internally tangent.

\section{Geometric lemmas}

For technical reasons to be clarified in this section, it is easier (and sufficient) to prove Theorem \ref{main} for every sufficiently short compact subinterval $J \subset [0,2\pi)$ separately. We will adopt the notation
\begin{equation}\label{tangency}
  \Delta_{J}(z) = \min_{\theta \in J} |\pi_{V_{\theta}}(z)|, 
\end{equation}
where, as before, $V_{\theta} = \spa\{\gamma(\theta),\dot{\gamma}(\theta)\}$. Since $\{\gamma(\theta),\dot{\gamma}(\theta)\}$ is an orthonormal basis of $V_\theta$ (we can achieve this by re-parametrising $\gamma$ by arc-length), we have the estimate
\begin{equation} \label{Delta-gamma-est}
  \Delta_{J}(z) \le |\pi_{V_\theta}(z)| \le |\gamma(\theta) \cdot z| + |\dot{\gamma}(\theta) \cdot z| \le 2|\pi_{V_\theta}(z)|
\end{equation}
for all $\theta \in J$. We also trivially have
\begin{displaymath}
  \Delta_{J}(z) \leq |z|.
\end{displaymath}
The definition \eqref{tangency} makes sense for the general $\gamma$ satisfying the curvature condition \eqref{curvature}, as long as $J$ is contained in the domain of definition. In fact, until further notice, we work in that generality: the only standing assumptions are that $\gamma$, $\dot{\gamma}$, and $\ddot{\gamma}$ are continuous and well-defined on a compact interval $J$, and the curvature condition \eqref{curvature} is satisfied on $J$.

The compactness of $J$ and the curvature condition \eqref{curvature} together imply that there exists a constant $\kappa = \kappa(\gamma,J) > 0$ such that
\begin{equation}\label{kappa}
  \max\{|\gamma(\theta) \cdot w|,|\dot{\gamma}(\theta) \cdot w|, |\ddot{\gamma}(\theta) \cdot w|\} \geq \kappa, \qquad (w,\theta) \in S^{2} \times J.
\end{equation}
The following lemma is a simple consequence of uniform continuity:

\begin{lemma}\label{uCont}
  There exists a constant $\lambda = \lambda(\kappa,\gamma) > 0$ with the following property: If $I \subset J$ is an interval of length $|I| \leq \lambda$, $z \in \R^{3}$, and $\theta \mapsto \phi_{z}(\theta)$ is one of the functions $\phi_{z}(\theta) = \gamma(\theta) \cdot z$ or $\phi_{z}(\theta) = \dot{\gamma}(\theta) \cdot z$ or $\phi_{z}(\theta) = \ddot{\gamma}(\theta) \cdot z$, then one of the following alternatives holds (depending on the choice of $I$ and $\phi_{z}$):
  \begin{itemize}
    \item[\textup{(S)}] $|\phi_{z}(\theta)| < \kappa |z|$ for all $\theta \in I$.
    \item[\textup{(L)}] $|\phi_{z}(\theta)| \geq \kappa |z|/2$ for all $\theta \in I$.
  \end{itemize} 
\end{lemma}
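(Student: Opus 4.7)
The plan is to reduce to a uniform-oscillation estimate on $\phi_z$ and then argue by contradiction with alternative (L). The key observation is that if $\phi_z \equiv 0$, which is the case precisely when $z = 0$, then (L) holds trivially; so throughout the argument I may assume $|z| > 0$ and divide everything by $|z|$. Setting $w := z/|z| \in S^{2}$ and $\psi_{w}(\theta) := \phi_{z}(\theta)/|z|$, I see that $\psi_{w}$ is one of $\gamma(\theta)\cdot w$, $\dot\gamma(\theta)\cdot w$, or $\ddot\gamma(\theta)\cdot w$, and the conclusion becomes the $z$-independent statement that either $|\psi_{w}(\theta)| < \kappa$ for all $\theta\in I$ (case S), or $|\psi_{w}(\theta)| \geq \kappa/2$ for all $\theta \in I$ (case L).

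Now I exploit uniform continuity. Since $\gamma, \dot\gamma, \ddot\gamma$ are continuous on the compact interval $J$, they are uniformly continuous there. Choose $\lambda = \lambda(\kappa,\gamma) > 0$ so small that
\begin{displaymath}
  |\theta_{1} - \theta_{2}| \leq \lambda \ \Longrightarrow\ \max\bigl\{|\gamma(\theta_{1})-\gamma(\theta_{2})|,\,|\dot\gamma(\theta_{1})-\dot\gamma(\theta_{2})|,\,|\ddot\gamma(\theta_{1})-\ddot\gamma(\theta_{2})|\bigr\} \leq \tfrac{\kappa}{2}.
\end{displaymath}
Then for any unit vector $w$, the Cauchy--Schwarz inequality $|(\gamma(\theta_{1})-\gamma(\theta_{2}))\cdot w| \leq |\gamma(\theta_{1}) - \gamma(\theta_{2})|$ (and likewise for the derivatives) yields the uniform oscillation bound
\begin{displaymath}
  |\psi_{w}(\theta_{1}) - \psi_{w}(\theta_{2})| \leq \tfrac{\kappa}{2}, \qquad \theta_{1},\theta_{2} \in I, \quad |I| \leq \lambda.
\end{displaymath}

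The dichotomy follows at once. If (L) fails on $I$, then there is some $\theta_{0}\in I$ with $|\psi_{w}(\theta_{0})| < \kappa/2$, and the oscillation bound gives
\begin{displaymath}
  |\psi_{w}(\theta)| \leq |\psi_{w}(\theta_{0})| + \tfrac{\kappa}{2} < \kappa, \qquad \theta \in I,
\end{displaymath}
which is exactly (S). Multiplying through by $|z|$ returns the original statement. (If both (S) and (L) happen to hold simultaneously, we simply assign the pair $(I,\phi_{z})$ to, say, (L), to make the dichotomy exhaustive and disjoint as in the statement.) There is no genuine obstacle here: the only thing that must be checked carefully is that the modulus of continuity is uniform in $w\in S^{2}$, which is why I normalise $z$ first and then invoke uniform continuity of $\gamma,\dot\gamma,\ddot\gamma$ on the compact set $J$; the constant $\kappa$ from \eqref{kappa} plays no role in selecting $\lambda$ beyond providing the threshold $\kappa/2$ for the oscillation.
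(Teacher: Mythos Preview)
Your proof is correct and follows essentially the same approach as the paper: normalise $z$ to a unit vector, use uniform continuity to get an oscillation bound of $\kappa/2$, and conclude that the failure of (L) forces (S). The only cosmetic difference is that the paper invokes uniform continuity of $(w,\theta)\mapsto\gamma(\theta)\cdot w$ on the compact product $S^{2}\times J$, whereas you (equivalently, and slightly more directly) use uniform continuity of $\gamma,\dot\gamma,\ddot\gamma$ on $J$ together with Cauchy--Schwarz.
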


\begin{proof}
  The maps $(w,\theta) \mapsto \gamma(\theta) \cdot w$, $(w,\theta) \mapsto \dot{\gamma}(\theta) \cdot w$ and $(w,\theta) \mapsto \ddot{\gamma}(\theta) \cdot w$ are uniformly continuous on $S^{2} \times J$. So, there is a constant $\lambda$ such that if $|(w,\theta) - (w',\theta')| \leq \lambda$, then $|\gamma(\theta) \cdot w - \gamma(\theta') \cdot w'| \leq \kappa/2$, and the same holds with $\gamma$ replaced by either $\dot{\gamma}$ or $\ddot{\gamma}$. Now, fix $I \subset J$ with $|I| \leq \lambda$, $z \in \R^{3}$, and $\phi_{z}$. Assume, for instance, that $\phi_{z}(\theta) = \gamma(\theta) \cdot z$. If $z = 0$, then evidently the alternative (L) holds. Otherwise, assume that $|z| > 0$, and alternative (L) fails. So, there exists $\theta_{0} \in I$ such that $|\phi_{z}(\theta_{0})| < \kappa|z|/2$. Then, if $\theta \in I$ is arbitrary, we have $|((z/|z|),\theta) - ((z/|z|),\theta_{0})| \leq \lambda$, and so
  \begin{displaymath}
    \frac{|\phi_{z}(\theta)|}{|z|} = \left|\gamma(\theta) \cdot \frac{z}{|z|} \right| \leq \left|\gamma(\theta_{0}) \cdot \frac{z}{|z|} \right| + \left| \gamma(\theta) \cdot \frac{z}{|z|} - \gamma(\theta_{0}) \cdot \frac{z}{|z|} \right| < \frac{\kappa}{2} + \frac{\kappa}{2} = \kappa.
  \end{displaymath}
  This means that alternative (S) holds for $I$ and $\phi_{z}$.
\end{proof}

Combined with \eqref{kappa}, the previous lemma has the following useful consequence:

\begin{lemma}
  Let $\lambda > 0$ be as in Lemma \ref{uCont}. If $I \subset J$ is an interval of length $|I| \leq \lambda$ and $z \in \R^{3} \setminus \{0\}$, then the map $\theta \mapsto \gamma(\theta) \cdot z$ has at most two zeros on $I$. Moreover, if $\theta \mapsto \dot{\gamma}(\theta) \cdot z$ has two zeros on $I$, then the alternative \textup{(L)} holds for $I$ and $\theta \mapsto \gamma(\theta) \cdot z$.
\end{lemma}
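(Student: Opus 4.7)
My plan is to prove both assertions in parallel by a Rolle's theorem argument combined with \eqref{kappa} and the dichotomy in Lemma \ref{uCont}. The key observation is that the alternatives (S) and (L) are mutually exclusive, so the presence of \emph{any} zero of $\phi_z$ on $I$ forces the alternative (S) to hold for that function on $I$. Meanwhile, \eqref{kappa}, applied with $w = z/|z| \in S^2$, gives
\begin{displaymath}
  \max\{|\gamma(\theta) \cdot z|,|\dot{\gamma}(\theta) \cdot z|,|\ddot{\gamma}(\theta) \cdot z|\} \geq \kappa |z|, \qquad \theta \in J,
\end{displaymath}
which shows that alternative (S) cannot hold \emph{simultaneously} for all three of the functions $\gamma(\theta) \cdot z$, $\dot\gamma(\theta) \cdot z$, $\ddot\gamma(\theta)\cdot z$ on the same interval $I$.

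For the first assertion, suppose for contradiction that $\theta \mapsto \gamma(\theta) \cdot z$ has three zeros $\theta_1 < \theta_2 < \theta_3$ in $I$. Rolle's theorem applied to the two pairs $(\theta_1,\theta_2)$ and $(\theta_2,\theta_3)$ produces two zeros of $\theta \mapsto \dot\gamma(\theta) \cdot z$ in $I$; a further application of Rolle's theorem then yields a zero of $\theta \mapsto \ddot\gamma(\theta) \cdot z$ in $I$. By the exclusivity of (S) and (L), alternative (S) must hold for all three of these functions on $I$, contradicting the displayed inequality above.

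For the second assertion, assume that $\theta \mapsto \dot\gamma(\theta) \cdot z$ has two zeros in $I$ and suppose, towards a contradiction, that alternative (L) fails for $\theta \mapsto \gamma(\theta) \cdot z$, so that (S) holds for it. Applying Rolle's theorem to the two zeros of $\dot\gamma(\theta)\cdot z$ yields a zero of $\ddot\gamma(\theta)\cdot z$ in $I$, and hence (S) also holds for $\ddot\gamma(\theta)\cdot z$ on $I$. The hypothesis forces (S) for $\dot\gamma(\theta)\cdot z$ as well, and we again contradict \eqref{kappa}.

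I do not foresee a substantial obstacle here: the whole argument rests on the Rolle / curvature bookkeeping, and both alternatives in Lemma \ref{uCont} fit together with \eqref{kappa} exactly as needed. The only mild point to be careful about is that one must apply \eqref{kappa} with the unit vector $z/|z|$ (noting $z\neq 0$) in order to translate the pointwise lower bound $\kappa$ on $S^2$ into the bound $\kappa|z|$ used to rule out the simultaneous occurrence of (S) for all three functions.
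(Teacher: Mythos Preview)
Your proof is correct and uses essentially the same ingredients as the paper's proof: Rolle's theorem to descend from zeros of $\gamma\cdot z$ to zeros of $\dot\gamma\cdot z$ and $\ddot\gamma\cdot z$, the dichotomy of Lemma \ref{uCont}, and the curvature bound \eqref{kappa}. The only organisational difference is that the paper proves the second assertion first (directly, concluding $|\gamma(\theta)\cdot z|\geq\kappa|z|$ from \eqref{kappa} once (S) holds for $\dot\gamma\cdot z$ and $\ddot\gamma\cdot z$) and then derives the first assertion as a corollary, whereas you prove both assertions independently by contradiction via the observation that (S) cannot hold for all three functions simultaneously; this is a cosmetic rearrangement rather than a different idea.
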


\begin{proof}
  We start with the second claim. Assume that $|I| \leq \lambda$ and $\theta \mapsto \dot{\gamma}(\theta) \cdot z$ has two zeros on $I$, for some $z \in \R^{3} \setminus \{0\}$. This implies, by Rolle's theorem, that $\theta \mapsto \ddot{\gamma}(\theta) \cdot z$ has a zero on $I$. Now Lemma \ref{uCont} implies that the alternative (S) holds for $I$ and both $\theta \mapsto \dot{\gamma}(\theta) \cdot z$ and $\theta \mapsto \ddot{\gamma}(\theta) \cdot z$. Consequently, by \eqref{kappa}, we have $|\gamma(\theta) \cdot z| \geq \kappa |z|$ for all $\theta \in I$, so alternative (L) holds for $\theta \mapsto \gamma(\theta) \cdot z$. 

  The first claim follows from the second one: If $\theta \mapsto \gamma(\theta) \cdot z$ had three zeros on $I$, then $\theta \mapsto \dot{\gamma}(\theta) \cdot z$ would have two zeros on $I$ again by Rolle's theorem. But then, by the second claim, $\theta \mapsto \gamma(\theta) \cdot z$ satisfies the alternative (L) on $I$, and hence cannot have zeros on $I$.
\end{proof}

Since the short subintervals $I \subset J$ have such pleasant properties, we restrict our attention to one of them. For notational convenience, we redefine $J$ to be any subinterval of the initial interval of length $|J| \leq \lambda/2$, and such that $2J$ is still contained inside the initial interval. This change in notation also affects the definition of $\Delta_{J}$ in \eqref{tangency}.

\begin{assumption}\label{ass1}
  We assume that the interval $2J$ satisfies the conclusion of Lemma \ref{uCont}: for every $z \in \R^{3}$, and each of the three possible choices of $\phi_{z}$, either alternative (L) or (S) is satisfied on the interval $2J$.
\end{assumption}

The next lemma is a close relative of Lemma 3.1 in \cite{KW}, and proof is virtually the same.

\begin{lemma}\label{Edelta}
  Fix $\delta > 0$ and $z \in \R^{3}$ with $|z| \geq C\delta$, where $C = C(\gamma,J) \geq 1$ is a sufficiently large constant. Define $E_{\delta}(z) := \{\theta \in J/2 : |\gamma(\theta) \cdot z| \le \delta\}$. 
  \begin{itemize}
    \item[(1)] The set $E_{\delta}(z)$ is contained in a single interval of length at most a constant times 
    \begin{displaymath} \sqrt{(\Delta_{J}(z) + \delta)/|z|}. \end{displaymath}
    Moreover, if $\Delta_{J}(z) \leq |z|/C$, and $C = C(\gamma,J)$ is sufficiently large, then this interval can be centred at a point $\theta_{0} \in 2J$ with $\dot{\gamma}(\theta_{0}) \cdot z = 0$ and $|\pi_{V_{\theta_{0}}}(z)| \lesssim \Delta_{J}(z)$.
    \item[(2)] The set $E_{\delta}(z)$ consists of at most two intervals $I_{1},I_{2}$, whose lengths are bounded by
    \begin{equation*}
      |I_{j}| \lesssim \frac{\delta}{\sqrt{(\Delta_{J}(z) + \delta)|z|}}.
    \end{equation*}
  \end{itemize}
  The implicit constants in the estimates above depend only on $\gamma$ and $J$.
\end{lemma}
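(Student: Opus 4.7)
The plan is to study the real function $\phi(\theta):=\gamma(\theta)\cdot z$, whose first two derivatives are $\phi'(\theta)=\dot{\gamma}(\theta)\cdot z$ and $\phi''(\theta)=\ddot{\gamma}(\theta)\cdot z$, and to exploit the fundamental identity $|\pi_{V_\theta}(z)|^2=\phi(\theta)^2+\phi'(\theta)^2$ coming from the fact that $\{\gamma(\theta),\dot{\gamma}(\theta)\}$ is an orthonormal basis of $V_\theta$. Note that $E_\delta(z) = \{\theta\in J/2:|\phi(\theta)|\le \delta\}$ by definition. I would first apply Assumption \ref{ass1} to each of $\phi$, $\phi'$, $\phi''$ on $2J$, and observe that by \eqref{kappa} at least one of the three functions must satisfy the large alternative (L) on $2J$: otherwise $|\phi|,|\phi'|,|\phi''|<\kappa|z|$ at every point of $2J$, contradicting \eqref{kappa}.

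From here, split into three cases. If $\phi$ itself satisfies (L), then $|\phi|\ge\kappa|z|/2>\delta$ everywhere on $2J$ (provided the constant $C$ in $|z|\ge C\delta$ is at least $2/\kappa$), so $E_\delta(z)=\emptyset$ and both conclusions are vacuous. If $\phi$ satisfies (S) while $\phi'$ satisfies (L), then $|\phi'|\ge \kappa|z|/2$ on $2J$: $\phi$ is strictly monotone on $J/2$, so $E_\delta(z)$ is a single interval with $|E_\delta(z)|\lesssim \delta/|z|$, and the identity $|\pi_{V_\theta}(z)|^2\ge \phi'(\theta)^2$ yields $\Delta_J(z)\gtrsim|z|$, which upgrades the bound $\delta/|z|$ to both $\sqrt{(\Delta_J(z)+\delta)/|z|}$ and $\delta/\sqrt{(\Delta_J(z)+\delta)|z|}$ as required. (If the main case below produces $\phi'$ with no zero in $2J$, the same reasoning applies using that $J/2$ is separated from $\partial(2J)$ by a fixed distance $\asymp |J|$ and $|\phi''|\gtrsim|z|$, so $|\phi'|\gtrsim|z|$ on $J/2$.)

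The main case is when both $\phi$ and $\phi'$ satisfy (S); then \eqref{kappa} forces $\phi''$ to satisfy (L), so $|\phi''|\ge\kappa|z|/2$ with fixed sign on $2J$. Consequently $\phi'$ is strictly monotonic and admits a unique zero $\theta_0\in 2J$. Taylor's formula gives
\[
\phi(\theta)=\phi(\theta_0)+\int_{\theta_0}^{\theta}(\theta-s)\phi''(s)\dd s,
\]
so $\phi$ is a perturbation of the exact quadratic $\phi(\theta_0)+\tfrac{1}{2}\phi''(\theta_0)(\theta-\theta_0)^2$ with leading coefficient of size $\asymp|z|$ and fixed sign. Reading off $\{|\phi|\le\delta\}$ from this picture (whose exact shape depends on whether $\phi(\theta_0)$ and $\phi''(\theta_0)$ have equal or opposite signs) produces at most two intervals contained in
\[
\{\theta:|\theta-\theta_0|\lesssim \sqrt{(|\phi(\theta_0)|+\delta)/|z|}\},
\]
and the elementary identity $\sqrt{a+\delta}-\sqrt{a-\delta}=2\delta/(\sqrt{a+\delta}+\sqrt{a-\delta})$ bounds the length of each component by $\lesssim \delta/\sqrt{|z|(|\phi(\theta_0)|+\delta)}$. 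Since $\phi'(\theta_0)=0$, we have $|\pi_{V_{\theta_0}}(z)|=|\phi(\theta_0)|$, and the comparability $|\phi(\theta_0)|\lesssim\Delta_J(z)$ can be established by Taylor-expanding $\phi$ between $\theta_0$ and a minimiser $\theta_*\in\overline{J}$ of $|\pi_{V_\cdot}(z)|$: the relation $|\phi'(\theta_*)|=|\int_{\theta_0}^{\theta_*}\phi''|\gtrsim|z||\theta_*-\theta_0|$ combined with $|\phi'(\theta_*)|\le \Delta_J(z)$ forces $|\theta_*-\theta_0|\lesssim \Delta_J(z)/|z|$, whence $|\phi(\theta_0)-\phi(\theta_*)|\lesssim \Delta_J(z)^2/|z|\lesssim \Delta_J(z)$, giving $|\phi(\theta_0)|\lesssim \Delta_J(z)$. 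Substituting $\Delta_J(z)$ in place of $|\phi(\theta_0)|$ yields both conclusions of the lemma.

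The main obstacle, as I see it, is the reverse comparison $|\phi(\theta_0)|\gtrsim\Delta_J(z)$ that one needs in order to actually replace $(|\phi(\theta_0)|+\delta)$ by $(\Delta_J(z)+\delta)$ inside the square root in (2). This is immediate when the critical point $\theta_0$ lies in $\overline{J}$, because then $|\pi_{V_{\theta_0}}(z)|\ge\Delta_J(z)$ by definition; but if $\theta_0\in 2J\setminus\overline{J}$ one must argue more carefully, for instance by showing that the minimiser $\theta_*$ is then an endpoint of $\overline{J}$ and re-running the Taylor estimate to control $\Delta_J(z)$ from above in terms of $|\phi(\theta_0)|$ up to an additive error of size $\delta$, which is absorbed into $\Delta_J(z)+\delta$. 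The remaining computations are routine Taylor estimates that closely mirror the strategy of \cite[Lemma 3.1]{KW}.
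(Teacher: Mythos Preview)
Your approach is essentially the paper's: reduce to the case where $|\phi''|\gtrsim|z|$ on $2J$, locate the unique critical point $\theta_0$ of $\phi$, Taylor-expand, and read off the two estimates with $|\phi(\theta_0)|$ in place of $\Delta_J(z)$. The comparison $|\phi(\theta_0)|\lesssim\Delta_J(z)$ is obtained in the paper exactly as you describe.

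The one place where you diverge from the paper is in resolving the obstacle you correctly flag, namely the reverse inequality $|\phi(\theta_0)|\gtrsim\Delta_J(z)$ when $\theta_0\in 2J\setminus\overline{J}$. Your proposed route (arguing that the minimiser $\theta_*$ of $\theta\mapsto|\pi_{V_\theta}(z)|$ on $\overline{J}$ is then an endpoint) is not obviously correct: interior critical points of $\phi^2+(\phi')^2$ occur where $\phi+\phi''=0$, and since $|\phi''|\ge\kappa|z|/2$ while alternative (S) only gives $|\phi|<\kappa|z|$, such points are not excluded. The paper sidesteps the issue entirely: in the relevant regime $2\delta\le\Delta_J(z)\le c|z|$ with $c$ small, the interval from part (1) containing $E_\delta(z)$ has diameter at most $C\sqrt{(\Delta_J(z)+\delta)/|z|}<|J|/10$, so if $\theta_0\in 2J\setminus J$ this interval cannot reach $J/2$ and hence $E_\delta(z)=\emptyset$. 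One may therefore assume $\theta_0\in J$, where $|\phi(\theta_0)|=|\pi_{V_{\theta_0}}(z)|\ge\Delta_J(z)$ is immediate from the definition. This shortcut is both simpler and avoids the doubtful endpoint claim.
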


\begin{proof}
  Write $\Delta := \Delta_{J}(z)$. First of all, we may assume that
  \begin{equation}\label{form93}
    \Delta \leq c|z|
  \end{equation} 
  for a suitable small constant $c = 1/C(\gamma,J) \in (0,\kappa/4)$, to be determined a bit later. Indeed, otherwise $|\gamma(\theta) \cdot z| + |\dot{\gamma}(\theta) \cdot z| \ge \Delta > c|z| \geq 2\delta$ for all $\theta \in J$ by \eqref{Delta-gamma-est} and the assumption $|z| \geq C\delta$, and in particular $|\dot{\gamma}(\theta) \cdot z| \gtrsim_{\gamma,J} |z|$ for $\theta \in E_{\delta}(z)$. If this is the case, both claims of the lemma are easy to verify.

  Since $c < \kappa/4$, the estimates \eqref{Delta-gamma-est} and \eqref{form93} imply that $|\gamma(\theta) \cdot z| + |\dot{\gamma}(\theta) \cdot z| \le 2\Delta < \kappa|z|/2$ for some $\theta$. Therefore, both $\theta \mapsto \gamma(\theta) \cdot z$ and $\theta \mapsto \dot{\gamma}(\theta) \cdot z$ satisfy the alternative (S) on $2J$. Hence, by the quantitative curvature condition \eqref{kappa}, we have
  \begin{equation}\label{form26}
    |\ddot{\gamma}(\theta) \cdot z| \geq \kappa|z|, \qquad \theta \in 2J.
  \end{equation}
  Thus, $\theta \mapsto \gamma(\theta) \cdot z$ is either strictly convex or strictly concave on $2J$, and $E_{\delta}(z)$ consists of at most two intervals $I_{1}$ and $I_{2}$. Thus, the situation is reduced to the fairly simple case depicted in Figure \ref{fig2}.

  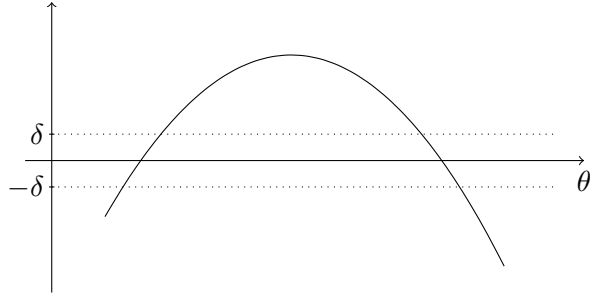
\begin{figure}[t!]
  \begin{center}
    \begin{tikzpicture}[scale=0.7]
      \draw [->] (-0.5,0) -- (10,0) node [below] {$\theta$};
      \draw [->] (0,-2.5) -- (0,3) node [] {};
      \draw (-0.05,-0.5) -- (0.05,-0.5) node [left] {$-\delta$};
      \draw (-0.05,0.5) -- (0.05,0.5) node [left] {$\delta$};
      \draw [domain=1:8.5,smooth,variable=\x] plot ({\x},{(-0.25)*(\x-4.5)^2+2});
      \draw [dotted] (0,-0.5) -- (9.5,-0.5);
      \draw [dotted] (0,0.5) -- (9.5,0.5);
    \end{tikzpicture}
    \caption{The picture depicts the map $\theta \mapsto \gamma(\theta) \cdot z$ and the set $E_\delta(z)$ in Lemma \ref{Edelta}.}
    \label{fig2}
  \end{center}
  \end{figure}

  Let $\theta_{\Delta} \in J$ be such that
  \begin{displaymath}
    |\pi_{V_{\theta_{\Delta}}}(z)| = \Delta.
  \end{displaymath}
  Then \eqref{Delta-gamma-est} implies that $|\dot\gamma(\theta_\Delta) \cdot z| \le 2\Delta$. By \eqref{form26}, and assuming that $c$ in \eqref{form93} satisfies $c < \kappa|J|/10$, the mapping $\theta \mapsto \dot{\gamma}(\theta) \cdot z$ has a unique zero at some point $\theta_{0} \in 2J$ with $|\theta_{0} - \theta_{\Delta}| \leq 2\Delta/(\kappa |z|) < |J|/5$. Observe that
  \begin{equation}\label{form27}
    |\gamma(\theta_{0}) \cdot z| \leq |\gamma(\theta_{\Delta}) \cdot z| + \int_{\theta_{\Delta}}^{\theta_{0}} |\dot{\gamma}(s) \cdot z| \, \dd s \leq \Delta + |z||\theta_{0} - \theta_{\Delta}| \leq C\Delta,
  \end{equation}
  where $C = C_{\gamma,J} \geq 1$, so in particular $|\pi_{V_{\theta_{0}}}(z)| \lesssim \Delta$.

  Write $[a,b] := 2J$. Note that neither $\theta \mapsto \dot{\gamma}(\theta) \cdot z$ nor $\theta \mapsto \ddot{\gamma}(\theta) \cdot z$ changes sign on $[a,\theta_{0}]$ or $[\theta_{0},b]$. Thus, if $\theta_{1} \in [\theta_{0},b]$, then we can use \eqref{form27} and \eqref{form26} to estimate
  \begin{equation*}
  \begin{split}
    |\gamma(\theta_{1}) \cdot z| &\geq \left| \int_{\theta_{0}}^{\theta_{1}} \dot{\gamma}(s) \cdot z \dd s \right| - C\Delta = \int_{\theta_{0}}^{\theta_{1}} |\dot{\gamma}(s) \cdot z| \dd s - C\Delta \\
    &= \int_{\theta_{0}}^{\theta_{1}} \int_{\theta_0}^{s} |\ddot{\gamma}(r) \cdot z| \dd r \dd s - C\Delta \ge \frac{\kappa |z|}{2}(\theta_{1} - \theta_{0})^{2} - C\Delta.
  \end{split}
  \end{equation*}
  Thus, $\theta_{1} \in E_{\delta}(z)$ can only occur, if $\kappa |z|(\theta_{1} - \theta_{0})^{2}/2 - C\Delta \leq \delta \leq C\delta$, which gives 
  \begin{displaymath}
    \theta_{1} - \theta_{0} \leq \left(\frac{2C}{\kappa} \right)^{1/2}\sqrt{(\Delta + \delta)/|z|}.
  \end{displaymath} 
  If $\theta_{1} \in [a,\theta_{0}]$, then a similar estimate holds for $\theta_{0} - \theta_{1}$. Hence
  \begin{equation}\label{form29}
    E_{\delta}(z) \subset B(\theta_{0},C\sqrt{(\Delta + \delta)/|z|})
  \end{equation}
  for $C = C_{\gamma,J} \geq 1$, as claimed (here, and in the remainder of the proof, the numerical value of "$C$" is allowed to change from line to line, but it will only depend on $\gamma,J$).

  To prove the second claim, recall that $E_{\delta}(z)$ consists of at most two intervals $I_{1}$ and $I_{2}$, which, by \eqref{form29}, are both located inside $B(\theta_{0},C\sqrt{(\Delta + \delta)/|z|})$. If $\Delta \leq 2\delta$, then the estimate $|I_{j}| \leq |B(\theta_{0},C\sqrt{(\Delta + \delta)/|z|})| \lesssim \sqrt{\delta}/\sqrt{|z|}$ gives the desired bound. So, we may assume that 
  \begin{equation}\label{form79}
    2\delta \leq \Delta \leq c|z|.
  \end{equation}
  Then, if $c > 0$ was taken small enough, depending on $J$, the diameter of the single interval in \eqref{form29} containing both $\theta_{0}$ and $E_{\delta}(z)$ is smaller than $|J|/10$. In particular, if $\theta_{0} \in 2J \setminus J$, then $E_{\delta}(z) \subset J/2$ is empty. So, we may assume that $\theta_{0} \in J$, which gives $|\gamma(\theta_{0}) \cdot z| \geq \Delta$ by \eqref{Delta-gamma-est} and recalling that $\dot{\gamma}(\theta_{0}) \cdot z = 0$. Observe that if $\theta_{1} \in E_{\delta}(z)$, then, by \eqref{form26},
  \begin{align*}
    \delta &\geq |\gamma(\theta_{1}) \cdot z| \geq \Delta - \int_{\theta_{0}}^{\theta_{1}} |\dot{\gamma}(s) \cdot z| \dd s \\
    &\geq \Delta - \int_{\theta_{0}}^{\theta_{1}} \int_{\theta_{0}}^{s} |\ddot{\gamma}(r) \cdot z| \dd r \dd s \geq \Delta - C|z|(\theta_{1} - \theta_{0})^{2}
  \end{align*}
  with $C = \|\ddot{\gamma}\|_{L^{\infty}(J)}$. By \eqref{form79}, this implies
  \begin{displaymath}
    |\theta_{1} - \theta_{0}| \gtrsim \sqrt{\frac{\Delta - \delta}{|z|}} \gtrsim \sqrt{\frac{\Delta}{|z|}}, \qquad \theta_{1} \in E_{\delta}(z).
  \end{displaymath}
  Using \eqref{form26}, we finally infer that
  \begin{equation*}
    |\dot{\gamma}(\theta_{1}) \cdot z| = \int_{\theta_0}^{\theta_1} |\ddot{\gamma}(s) \cdot z| \dd s \ge \kappa|z||\theta_1-\theta_0| \gtrsim \sqrt{\Delta |z|}
  \end{equation*}
  for $\theta_{1} \in E_{\delta}(z)$, which shows that $|I_j| \lesssim \delta/\sqrt{\Delta |z|}$. The proof is complete.
\end{proof}

\subsection{Tangency of circles}
In this section, we gather some estimates on the size and shape of intersections of (circular) annuli. These are harvested verbatim from T. Wolff's paper \cite{Wo3} and survey \cite{Wo}. 

\begin{definition}[The region $\mathbf{B}_{0}$]\label{B0}
  We write $\mathbf{B}_{0} \subset \R^{3}$ for the set 
  \begin{displaymath} \mathbf{B}_{0} = \{(x,r) \in \R^{3} : x \in B(0,\tfrac{1}{4}) \text{ and } \tfrac{1}{2} \leq r \leq 2\}.
  \end{displaymath}
\end{definition}

The set plays the role of "the unit ball" or "the unit cube" in the arguments below: geometric constants stay under control, as long as points are chosen from $\mathbf{B}_{0}$. The next result is from \cite{Wo}, and it is an analogue of Lemma \ref{Edelta} for circles (also the proof is fairly similar). To be precise, the statement of Lemma \ref{circleLemma} contains some details which are not explicit in the statement of \cite[Lemma 3.1]{Wo}, but are apparent from the proof.

\begin{lemma}[\mbox{\cite[Lemma 3.1]{Wo}}]\label{circleLemma}
  Assume that $S(x_{1},r_{1})$ and $S(x_{2},r_{2})$ are planar circles with $(x_{1},r_{1}),(x_{2},r_{2}) \in \mathbf{B}_{0}$. Let $\delta > 0$ and denote by $S^{\delta}(x,r)$ the $\delta$-annulus around the circle $S(x,r)$. Define\footnote{Recall Section \ref{geomInterpretation}.} $\Delta' := \Delta'((x_{1},r_{1})-(x_{2},r_{2})) = ||x_{1} - x_{2}| - |r_{1} - r_{2}||$ and write $t := |(x_{1},r_{1}) - (x_{2},r_{2})|$. Then 
  \begin{itemize}
    \item[(1)] $S^{\delta}(x_{1},r_{1}) \cap S^{\delta}(x_{2},r_{2})$ is contained in a ball centred at
    \begin{displaymath}
      \zeta(x_{1},x_{2}) := x_{1} + \sgn(r_{1} - r_{2})r_{1}\frac{x_{2} - x_{1}}{|x_{2} - x_{1}|},
    \end{displaymath}
    with radius at most a constant times $\sqrt{(\Delta' + \delta)/(t + \delta)}$. 
    \item[(2)] $S^{\delta}(x_{1},r_{1}) \cap S^{\delta}(x_{2},r_{2})$ is contained in the union of the $\delta$-neighbourhoods of at most two arcs on $S(x_{2},r_{2})$, both of length at most a constant times $\delta/\sqrt{(\Delta' + \delta)(t + \delta)}$. In particular, 
    \begin{displaymath}
      |S^{\delta}(x_{1},r_{1}) \cap S^{\delta}(x_{2},r_{2})| \lesssim \frac{\delta^{2}}{\sqrt{(\Delta' + \delta)(t + \delta)}}.
    \end{displaymath}
  \end{itemize}
\end{lemma}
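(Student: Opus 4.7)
The lemma is cited as \cite[Lemma 3.1]{Wo} and, as the paper indicates, its proof is a close analogue of the proof of Lemma \ref{Edelta}. I would mirror that strategy, with the linear projection $\theta \mapsto \gamma(\theta) \cdot z$ replaced by the radial function on a parametrized circle. Concretely, translate $x_{2}$ to the origin and fix coordinates so that $x_{1} = \mu e_{1}$ with $\mu = |x_{1} - x_{2}|$, parametrize $S(x_{2},r_{2})$ by $y(\theta) = r_{2}(\cos \theta, \sin \theta)$, and consider
\begin{displaymath}
  g(\theta) := |y(\theta) - x_{1}| - r_{1} = \sqrt{\mu^{2} + r_{2}^{2} - 2\mu r_{2} \cos \theta} - r_{1}.
\end{displaymath}
A point lies in $S^{\delta}(x_{1},r_{1}) \cap S(x_{2},r_{2})$ iff the corresponding $\theta$ satisfies $|g(\theta)| \leq \delta$, and the full annular intersection is contained in the $\delta$-neighbourhood of the arcs obtained from $\{|g| \leq \delta\}$.

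Direct differentiation gives $g'(\theta) = \mu r_{2} \sin \theta/(g(\theta) + r_{1})$, so $g$ is strictly monotone on $(0,\pi)$ with critical points only at $\theta = 0$ and $\theta = \pi$. Computing the values at the critical points and comparing with the formula for $\zeta(x_{1},x_{2})$, one checks that the critical point $\theta_{0}$ chosen by $\sgn(r_{1} - r_{2})$ satisfies $y(\theta_{0}) = \zeta(x_{1},x_{2})$ and $|g(\theta_{0})| = \Delta'$. A Taylor expansion then yields
\begin{displaymath}
  |g(\theta) - g(\theta_{0})| \geq c|g''(\theta_{0})|(\theta - \theta_{0})^{2},
\end{displaymath}
so that $|g(\theta)| \leq \delta$ forces $|\theta - \theta_{0}|^{2} \lesssim (\Delta' + \delta)/|g''(\theta_{0})|$. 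By monotonicity, the sublevel set $\{|g| \leq \delta\}$ consists of at most one interval about each of $\theta = 0$ and $\theta = \pi$, yielding the claim that the intersection is contained in the $\delta$-neighbourhood of at most two arcs.

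The only real quantitative input is a good lower bound $|g''(\theta_{0})| \gtrsim t + \delta$, and this is the step I expect to be the main obstacle. A direct computation gives $|g''(\theta_{0})| = \mu r_{2}/|\mu \pm r_{2}|$, which in the near-tangency regime ($\Delta'$ small) equals $\mu r_{2}/r_{1} \approx \mu$. Since $\Delta'$ small forces $\mu \approx |r_{1} - r_{2}|$ and hence $t = \sqrt{\mu^{2} + (r_{1} - r_{2})^{2}} \approx \mu$, this gives $|g''(\theta_{0})| \gtrsim t$ in the regime that matters. The complementary case is a short dichotomy: if $\mu \ll t$ then $|r_{1} - r_{2}| \approx t$ and $\Delta' \gtrsim t/2$, so either the annuli fail to meet (because $\delta \ll \Delta'$) or the target bound $\sqrt{(\Delta' + \delta)/(t + \delta)}$ is comparable to $1$ and therefore trivial within $B_{0}$. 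Once $|g''(\theta_{0})| \gtrsim t + \delta$ is established, converting the angular bound $|\theta - \theta_{0}| \lesssim \sqrt{(\Delta' + \delta)/(t + \delta)}$ to an arc length bound on $S(x_{2},r_{2})$ (using $r_{2} \in [\tfrac{1}{2},2]$) produces the ball in part (1) centred at $\zeta = y(\theta_{0})$. For part (2), the same bound combined with the lower estimate $|g'(\theta)| \gtrsim \sqrt{(\Delta' + \delta)(t + \delta)}$ on the boundary of each component — derived by one more application of the fundamental theorem of calculus using the $g''$-bound, in exact analogy with the final step of Lemma \ref{Edelta} — yields the arc length bound $\delta/\sqrt{(\Delta' + \delta)(t + \delta)}$, and the area estimate follows by multiplying with the $\delta$-width.
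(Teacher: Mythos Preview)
The paper does not supply its own proof of this lemma; it is quoted verbatim from Wolff's survey \cite[Lemma~3.1]{Wo}, with only the remark that the argument is ``fairly similar'' to that of Lemma~\ref{Edelta}. Your proposal carries out exactly this analogy, and the strategy is the right one: parametrise one circle, reduce to a sublevel-set estimate for the scalar function $g(\theta) = |y(\theta) - x_{1}| - r_{1}$, exploit that $g$ is monotone on $(0,\pi)$ with $|g''(\theta_{0})| \approx \mu$ in $B_{0}$, and run the same two-step Taylor argument as in Lemma~\ref{Edelta}.

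Two small inaccuracies are worth noting. First, the identity $y(\theta_{0}) = \zeta(x_{1},x_{2})$ is not literally true: $y(\theta_{0})$ lies on $S(x_{2},r_{2})$ while $\zeta$ lies on $S(x_{1},r_{1})$, and a direct computation in your coordinates gives $|y(\theta_{0}) - \zeta| = \Delta'$. This is harmless, since $\Delta' \lesssim \sqrt{(\Delta' + \delta)/(t+\delta)}$ throughout $B_{0}$, so the ball may be recentred at no cost. Second, the sublevel set $\{|g| \le \delta\}$ is not ``one interval about each of $\theta = 0$ and $\theta = \pi$'': by the monotonicity of $g$ on $(0,\pi)$ and on $(-\pi,0)$ it consists of at most two intervals that are reflections of one another under $\theta \mapsto -\theta$, both located near the \emph{single} critical point $\theta_{0}$. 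The other critical value satisfies $|g(\theta_{0}')| = \mu + |r_{1} - r_{2}| \gtrsim t$, so $\theta_{0}'$ is excluded once $\delta \lesssim t$ (and the case $\delta \gtrsim t$ is trivial). With these cosmetic corrections your argument goes through and matches Wolff's.
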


An important special case of the lemma is when $\Delta' \leq \delta$: following T. Wolff \cite{Wo3}, we say that the two circles $S(x_{1},r_{1})$ and $S(x_{2},r_{2})$ are then $\delta$-\emph{incident}, and it follows from Lemma \ref{circleLemma}(1) that $S^{C\delta}(x_{1},r_{1}) \cap S^{C\delta}(x_{2},r_{2})$ can be covered by a single $\delta$-neighbourhood of a circular arc of length $\lesssim_{C} \sqrt{\delta/(t + \delta)}$. This numerology motivates the following definition (which is from \cite[Section 1]{Wo3}):

\begin{definition}[$(\delta,t)$-rectangles]
  Let $0 < \delta \leq t \leq 1$. A \emph{$(\delta,t)$-rectangle} $R \subset \R^{2}$ is a $\delta$-neighbourhood of a circular arc of length $\sqrt{\delta/t}$. Two $(\delta,t)$-rectangles are \emph{$C$-comparable}, if there is a single $(C\delta,t)$-rectangle containing both of them. Otherwise $R_{1}$ and $R_{2}$ are $C$-\emph{incomparable}. A circle $S(x,r)$ is \emph{$C$-tangent} to a $(\delta,t)$-rectangle, if $S^{C\delta}(x,r)$ contains $R$. Finally, fixing some large absolute constant $C_{0} \geq 1$, we say that two rectangles $R_{1},R_{2}$ are simply \emph{comparable}, if they are $C_{0}$-comparable. Similarly, a circle being \emph{tangent} to a rectangle refers to $C_{0}$-tangency. 
\end{definition}

We record a part of \cite[Lemma 1.5]{Wo3}:

\begin{lemma}[Incidence vs.\ tangency]\label{incidenceTangency}
  Assume that $S_{1} = S(x_{1},r_{2})$ and $S_{2} = S(x_{2},r_{2})$ satisfy the hypotheses of Lemma \ref{circleLemma}, with constants $t$ and $\Delta' \leq \delta$, so that the two circles are $\delta$-incident. Then, there exists a $(\delta,t)$-rectangle $R$ such that both $S_{1}$ and $S_{2}$ are tangent to $R$ (assuming that the constant $C_{0} \geq 1$ in the definition above was chosen large enough).
\end{lemma}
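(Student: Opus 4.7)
The plan is to exhibit $R$ explicitly as the $\delta$-neighbourhood of a short arc of $S(x_{1},r_{1})$ and verify tangency to $S(x_{2},r_{2})$ by a direct computation. With $\zeta = \zeta(x_{1},x_{2})$ the point from Lemma~\ref{circleLemma}(1), let $\alpha \subset S(x_{1},r_{1})$ be the arc of length $\sqrt{\delta/t}$ centred at $\zeta$, and put $R := \{y \in \R^2 : \dist(y,\alpha) \leq \delta\}$. Then $R$ is a $(\delta,t)$-rectangle by definition, and since $\alpha \subset S(x_{1},r_{1})$ we have $R \subset S^{\delta}(x_{1},r_{1})$; so $S(x_{1},r_{1})$ is $C_{0}$-tangent to $R$ as soon as $C_{0} \geq 1$.

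The substance is to show $R \subset S^{C_{0}\delta}(x_{2},r_{2})$. Writing $\sigma := \sgn(r_{1}-r_{2})$ and $e := (x_{2}-x_{1})/|x_{2}-x_{1}|$, one has $\zeta - x_{1} = \sigma r_{1} e$ and $\zeta - x_{2} = (\sigma r_{1} - |x_{2}-x_{1}|) e$; in particular $\zeta - x_{2}$ is parallel to the outward unit normal $e_{1} := (\zeta - x_{1})/r_{1}$ of $S(x_{1},r_{1})$ at $\zeta$. A case check on the sign of $\sigma$, using $\Delta' \leq \delta$, shows that $\zeta - x_{2} = a e_{1}$ with $a = r_{2} + O(\delta) > 0$; in particular $\zeta$ already lies within $O(\delta)$ of $S(x_{2},r_{2})$. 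Choosing planar coordinates with $\zeta = 0$ and $e_{1} = (1,0)$, the point of $S(x_{1},r_{1})$ at arc length $s$ from $\zeta$ becomes $y(s) = \bigl(r_{1}(\cos(s/r_{1})-1),\, r_{1}\sin(s/r_{1})\bigr)$ and $x_{2} = (-a,0)$, so a direct expansion gives the exact identity
\begin{equation*}
  |y(s) - x_{2}|^{2} \;=\; a^{2} + 2 r_{1}(r_{1} - a)\bigl(1 - \cos(s/r_{1})\bigr).
\end{equation*}
Using $a^{2} = r_{2}^{2} + O(\delta)$, $|r_{1} - a| = |r_{1} - r_{2}| + O(\delta) \lesssim t$, $1 - \cos(s/r_{1}) \lesssim s^{2}$, and the uniform bounds $r_{i} \in [\tfrac12, 2]$ from $B_{0}$, this yields $\bigl| |y(s) - x_{2}| - r_{2}\bigr| \lesssim \delta + t s^{2}$. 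For $|s| \leq \sqrt{\delta/t}$ the right-hand side is $\lesssim \delta$, hence $\alpha \subset S^{C\delta}(x_{2},r_{2})$; thickening by $\delta$ yields $R \subset S^{(C+1)\delta}(x_{2},r_{2})$, and choosing $C_{0} \geq C+1$ completes the proof.

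The main obstacle is the middle step: verifying $a = r_{2} + O(\delta)$ in both sign cases of $\sigma$, and reading off the $s^{2}$-coefficient of order $t$ from the identity above. The underlying geometric content is that two circles with $\Delta' \leq \delta$ are essentially internally tangent at $\zeta$ and separate from one another at rate $|1/r_{1} - 1/r_{2}| \approx |r_{1} - r_{2}| \lesssim t$; this is precisely why $\sqrt{\delta/t}$ is the correct arc-length scale on which the two circles stay $\delta$-close, and no analytic input beyond what already appears in the proof of Lemma~\ref{circleLemma} is needed.
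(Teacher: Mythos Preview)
Your argument is correct. The paper does not actually prove this lemma: it is recorded without proof as ``a part of \cite[Lemma 1.5]{Wo3}''. Your direct construction---taking $R$ to be the $\delta$-neighbourhood of the arc of $S(x_{1},r_{1})$ of length $\sqrt{\delta/t}$ centred at $\zeta$, and verifying $R \subset S^{C\delta}(x_{2},r_{2})$ via the exact identity $|y(s)-x_{2}|^{2} = a^{2} + 2r_{1}(r_{1}-a)(1-\cos(s/r_{1}))$---is a clean self-contained substitute for the citation. The case check giving $a = r_{2} + O(\delta)$ is correct in both sign regimes (with $a = r_{1} - \sigma|x_{2}-x_{1}|$ after rewriting $\zeta - x_{2}$ in terms of $e_{1}$), and the bound $|r_{1}-a| \lesssim t$ uses $|r_{1}-r_{2}| \leq t$ together with $\delta \leq t$. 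The only cosmetic point is that the edge case $r_{1}=r_{2}$ leaves $\sigma$ undefined, but then $\zeta$ can be taken as either intersection point of $S_{1}$ with the line through $x_{1},x_{2}$ and the same computation goes through.
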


\subsection{Tangency of sine waves}\label{tangencySection}

In this section, we apply the discussion above to the special curve we are considering in the present paper, namely 
\begin{displaymath}
  \gamma(\theta) = \tfrac{1}{\sqrt{2}}(\cos \theta, \sin \theta, 1).
\end{displaymath}
We keep assuming that $J \subset 2J \subset [0,2\pi)$ is a compact subinterval such that Assumption \ref{ass1} holds for $2J$. Note that
\begin{equation}\label{form80}
  \Delta'(z) \leq 2\Delta(z) \leq 2\Delta_{J}(z)
\end{equation}
by \eqref{form40a}. The converse inequality $\Delta_{J}(z) \leq \Delta'(z)$ is no longer true. Heuristically, $\Delta_{J}(z_{1} - z_{2})$ only measures the tangency between certain arcs of $S(x_{1},r_{1}),S(x_{2},r_{2})$, determined by $J$; even if the circles $S(x_{1},r_{1}),S(x_{2},r_{2})$ happened to be tangent, that is $\Delta'(z_{1} - z_{2}) = 0$, the point of tangency need not occur on this arc.
We define
\begin{displaymath}
  \Gamma(z) := \Gamma_{J}(z) := \{(\theta,\rho_{\theta}(z)) : \theta \in \tfrac{J}{2}\} \quad \text{and} \quad \Gamma^{\delta}(z) := \{(\theta,\theta') \in \tfrac{J}{2} \times \R : |\rho_{\theta}(z) - \theta'| \leq \delta\}.
\end{displaymath}
So, formally, $\Gamma(z) = \Gamma^{0}(z)$.

For later application, we are interested in the following problem. Fix $\epsilon,t \in (0,1]$ with $2C\epsilon \leq t$, where $C = C(\gamma,J) \geq 1$ is the constant from Lemma \ref{Edelta}. Assume that $z_{1} = (x_{1},r_{1}) \in \mathbf{B}_{0}$, $z_{2} = (x_{2},r_{2}) \in \mathbf{B}_{0}$, and $w \in \R^{2}$ are points satisfying
\begin{equation}\label{form82}
  w = (w_{1},w_{2}) \in \Gamma^{\epsilon}(z_{1}) \cap \Gamma^{\epsilon}(z_{2}), \quad t \leq |z_{1} - z_{2}| \leq 2t, \quad \text{and} \quad \Delta_{J}(z_{1} - z_{2}) \leq \epsilon.
\end{equation}
Note that $\Delta'(z_{1} - z_{2}) \leq 2\Delta_{J}(z_{1} - z_{2}) \leq 2\epsilon \leq |z_{1} - z_{2}|/C$ implies $|x_{1} - x_{2}| \sim t$. The heuristic meaning of \eqref{form82} is that the curves $\Gamma(z_{1})$ and $\Gamma(z_{2})$ intersect fairly tangentially at $w \in \R^{2}$, and by \eqref{form80} and the discussion at the end of Section \ref{geomInterpretation}, the same is true for the circles $S(x_{1},r_{2})$ and $S(x_{2},r_{2})$. How is the spatial \textbf{location} of the tangency between $S(x_{1},r_{1})$ and $S(x_{2},r_{2})$ related to $w$? The following lemma answers this question: there are at most constant many rectangles satisfying \eqref{form83}, so the location of tangency, at scale $\epsilon$, between $S(x_{1},r_{1})$ and $S(x_{2},r_{2})$ is roughly determined by the first coordinate of any point in the intersection $\Gamma^{\epsilon}(z_{1}) \cap \Gamma^{\epsilon}(z_{2})$.

\begin{lemma}\label{sineCurvesToCircles}
  Suppose that $z_{1} = (x_{1},r_{1}) \in \mathbf{B}_{0}$, $z_{2} = (x_{2},r_{2}) \in \mathbf{B}_{0}$, and $w \in \R^{2}$ satisfy \eqref{form82}, with $t \geq 2C\epsilon$. Then both circles $S(x_{1},r_{1})$ and $S(x_{2},r_{2})$ are tangent to an $(\epsilon,t)$-rectangle $R$ with 
  \begin{equation}\label{form83}
    R \subset S^{C_{0}\epsilon}(x_{1},r_{1}) \cap B(x_{1} + r_{1}(\cos w_{1},\sin w_{1}),C\sqrt{\epsilon/t}).
  \end{equation}
\end{lemma}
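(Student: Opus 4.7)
The plan is to translate the hypothesis \eqref{form82} into geometric information about the two circles $S(x_1,r_1)$ and $S(x_2,r_2)$, and then invoke the existing Wolff-type Lemmas \ref{circleLemma} and \ref{incidenceTangency} to produce the rectangle. The key conceptual point is that, because the \emph{sine wave} $\Gamma(z_1-z_2)$ nearly vanishes at $\theta=w_1$, the first coordinate $w_1$ encodes the angular direction in which the two circles are (nearly) internally tangent.

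First I would unpack \eqref{form82}. The intersection condition $w\in\Gamma^\epsilon(z_1)\cap\Gamma^\epsilon(z_2)$ yields $|\rho_{w_1}(z_1-z_2)|\le 2\epsilon$, which by the definition of $\rho_\theta$ reads
\begin{equation*}
  \bigl|(x_1-x_2)\cdot(\cos w_1,\sin w_1)+(r_1-r_2)\bigr|\le 2\sqrt{2}\,\epsilon.
\end{equation*}
The hypothesis $\Delta_J(z_1-z_2)\le\epsilon$ combined with \eqref{form80} gives $\Delta'(z_1-z_2)=\bigl||x_1-x_2|-|r_1-r_2|\bigr|\le 2\epsilon$. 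Because $|z_1-z_2|\approx t$ and $2C\epsilon\le t$, this forces $|x_1-x_2|\approx t$ and $|r_1-r_2|\approx t$; in particular the sign $\sigma:=\sgn(r_1-r_2)$ is well defined. Since the circles $S(x_1,r_1)$ and $S(x_2,r_2)$ are therefore $2\epsilon$-incident in the sense of Section 3.1, Lemma \ref{incidenceTangency} (with constants absorbed into $C_0$) produces an $(\epsilon,t)$-rectangle $R$ to which both circles are tangent. Lemma \ref{circleLemma}(1), applied with $\delta=C_0\epsilon$, then confines
\begin{equation*}
  R\subset S^{C_0\epsilon}(x_1,r_1)\cap S^{C_0\epsilon}(x_2,r_2)\subset B\bigl(\zeta(x_1,x_2),\,C\sqrt{\epsilon/t}\bigr),
\end{equation*}
where $\zeta(x_1,x_2)=x_1+\sigma r_1\,u$ with $u:=(x_2-x_1)/|x_2-x_1|$.

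It remains to show that the Wolff centre $\zeta(x_1,x_2)$ and the point $x_1+r_1(\cos w_1,\sin w_1)$ differ by at most $O(\sqrt{\epsilon/t})$; the desired inclusion \eqref{form83} then follows by the triangle inequality and an adjustment of $C$. Combining the two displayed inequalities above with $|x_2-x_1|\approx t$, I obtain
\begin{equation*}
  (\cos w_1,\sin w_1)\cdot u=\frac{r_1-r_2}{|x_2-x_1|}+O(\epsilon/t)=\sigma+O(\epsilon/t),
\end{equation*}
using that $|x_2-x_1|=|r_1-r_2|+O(\epsilon)$ and both are of size $\approx t$. Since $(\cos w_1,\sin w_1)$ and $\sigma u$ are unit vectors, the identity $1-v\cdot v'=\tfrac12|v-v'|^2$ upgrades this inner-product estimate to $|(\cos w_1,\sin w_1)-\sigma u|\lesssim\sqrt{\epsilon/t}$, whence
\begin{equation*}
  \bigl|\,(x_1+r_1(\cos w_1,\sin w_1))-\zeta(x_1,x_2)\,\bigr|=r_1\bigl|(\cos w_1,\sin w_1)-\sigma u\bigr|\lesssim\sqrt{\epsilon/t},
\end{equation*}
since $r_1\le 2$ on $B_0$. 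This is the only nontrivial step; the rest is bookkeeping. The main (small) obstacle is correctly identifying the sign $\sigma$, i.e.\ checking that the two cases $r_1>r_2$ and $r_1<r_2$ both land the rectangle on the same side of $x_1$ as predicted by Wolff's formula for $\zeta$, which is precisely what the sign of $(\cos w_1,\sin w_1)\cdot u$ records.
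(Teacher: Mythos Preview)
Your argument is correct and in fact somewhat more direct than the paper's. The paper first invokes Lemma \ref{Edelta} to locate a point $\theta_0\in 2J$ with $\dot\gamma(\theta_0)\cdot(z_1-z_2)=0$ and $|\pi_{V_{\theta_0}}(z_1-z_2)|\lesssim\epsilon$, then uses the geometric description of $\ell_{\theta_0}$ from Section \ref{geomInterpretation} to find $s$ with $|(z_1-z_2)-s(\cos\theta_0,\sin\theta_0,1)|\lesssim\epsilon$, and from this derives the sharper estimate $|(\cos\theta_0,\sin\theta_0)-e(x_1,x_2)|\lesssim\epsilon/t$; only afterwards does it transfer to $w_1$ via $|w_1-\theta_0|\lesssim\sqrt{\epsilon/t}$, losing back to $O(\sqrt{\epsilon/t})$. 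You bypass $\theta_0$ entirely: the single scalar equation $(x_2-x_1)\cdot(\cos w_1,\sin w_1)=(r_1-r_2)+O(\epsilon)$, divided by $|x_2-x_1|\approx t$, gives $(\cos w_1,\sin w_1)\cdot(\sigma u)=1+O(\epsilon/t)$ directly, and the unit-vector identity does the rest. Your route uses less machinery (no Lemma \ref{Edelta}, no $\ell_\theta$ description) at the cost of never seeing the sharper $\epsilon/t$ bound at the intermediate point $\theta_0$; since only the $\sqrt{\epsilon/t}$ conclusion is needed here, nothing is lost. The sign worry you flag is already handled by your computation: $|r_1-r_2|\ge|x_1-x_2|-2\epsilon\gtrsim t$, so $\sigma$ is well defined and the inner product automatically picks it out.
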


\begin{proof}
  Since $w = (w_1,w_2) \in \Gamma^{\epsilon}(z_1) \cap \Gamma^{\epsilon}(z_2)$, we trivially have $w_{1} \in J/2$ and 
  \begin{displaymath} |\gamma(w_{1}) \cdot (z_{1} - z_{2})| = |\rho_{w_{1}}(z_{1}) - \rho_{w_{1}}(z_{2})| \leq |\rho_{w_{1}}(z_{1}) - w_{2}| + |w_{2} - \rho_{w_{1}}(z_{2})| \leq 2\epsilon, \end{displaymath}
  and, therefore,
  \begin{displaymath}
    w_{1} \in E_{2\epsilon}(z_{1} - z_{2}) = \{\theta \in J/2 : |\gamma(\theta) \cdot (z_{1} - z_{2})| \le 2\epsilon\}.
  \end{displaymath}
  By Lemma \ref{Edelta}, the set $E_{2\epsilon}(z_{1} - z_{2})$ is contained in a single interval of length at most a constant times $\sqrt{\epsilon/t}$ around a certain point $\theta_{0} \in 2J$ with $|\pi_{V_{\theta_{0}}}(z_{1} - z_{2})| \lesssim \Delta_J(z_1-z_2)$. In particular,
  \begin{equation}\label{form70}
    |w_{1} - \theta_{0}| \lesssim \sqrt{\epsilon/t}.
  \end{equation}
  By Lemma \ref{circleLemma}(1), the intersection
  \begin{equation} \label{inclusion}
    S^{C_{0}\epsilon}(x_{1},r_{1}) \cap S^{C_{0}\epsilon}(x_{2},r_{2})
  \end{equation}
  is contained in a disc centred at
  \begin{displaymath}
    \zeta(x_{1},x_{2}) = x_{1} + r_{1}\sgn(r_{1} - r_{2}) \frac{x_{2} - x_{1}}{|x_{2} - x_{1}|} =: x_{1} + r_{1}e(z_{1},z_{2}).
  \end{displaymath}
  and radius at most a constant times $\sqrt{\epsilon/t}$.

  Now, we claim that 
  \begin{equation}\label{form81}
    |(\cos \theta_{0},\sin \theta_{0}) - e(z_{1},z_{2})| \lesssim \frac{\epsilon}{t},
  \end{equation}
  so that, by \eqref{form70},
  \begin{equation}\label{form84}
    |(\cos w_{1},\sin w_{1}) - e(z_{1},z_{2})| \lesssim \sqrt{\frac{\epsilon}{t}}.
  \end{equation}
  Start by recalling from Section \ref{geomInterpretation} that $|\pi_{V_{\theta_{0}}}(z_{1} - z_{2})| \lesssim \Delta_J(z_1-z_2) \leq \epsilon$ implies $\dist(z_{1} - z_{2},\ell_{\theta_{0}}) \lesssim \epsilon$, so we may find $s \in \R$ such that
  \begin{displaymath}
    |(z_{1} - z_{2}) - s(\cos \theta_{0},\sin \theta_{0},-1)| \lesssim \epsilon.
  \end{displaymath}
  It follows that
  \begin{equation}\label{form41a}
    |(x_{1} - x_{2}) - s(\cos \theta_{0},\sin \theta_{0})| \lesssim \epsilon \quad \text{and} \quad |(r_{1} - r_{2}) - s| \lesssim \epsilon.
  \end{equation}
  Abbreviate $\sigma := \sgn(r_{1} - r_{2})$ and $e := e(z_{1},z_{2})$. Then,
  \begin{align*}
    |e - (\cos \theta_{0},\sin \theta_{0})| \leq \left|\frac{x_{2} - x_{1}}{|x_{2} - x_{1}|} - \frac{s(\cos \theta_{0},\sin \theta_{0})}{|x_{2} - x_{1}|} \right|
    + \left| \frac{\sigma s(\cos \theta_{0},\sin \theta_{0})}{|x_{2} - x_{1}|} - (\cos \theta_{0},\sin \theta_{0}) \right|
  \end{align*} 
  Using \eqref{form41a} and the fact that $|x_{1} - x_{2}| \sim t$ (see the discussion after \eqref{form82}), the first term in the right-hand side of the above inequality is bounded by a constant times $\epsilon/t$. The second term admits the same estimate, using \eqref{form41a}:
  \begin{displaymath}
    \left|\frac{\sigma s}{|x_{2} - x_{1}|} -1 \right| \lesssim \frac{|\sigma s - |x_{2} - x_{1}||}{t} \leq \frac{|s-(r_{1} - r_{2})|}{t} + \frac{\Delta'(z_{2} - z_{1})}{t} \lesssim \frac{\epsilon}{t}.
  \end{displaymath}
  This proves \eqref{form81} and hence \eqref{form84}.

  Finally, by Lemma \ref{incidenceTangency}, both circles $S(x_{1},r_{1})$ and $S(x_{2},r_{2})$ are tangent to a certain $(\epsilon,t)$-rectangle $R$, which, by the definition of tangency, the inclusion of \eqref{inclusion}, and \eqref{form84}, means that
  \begin{align*}
    R &\subset S^{C_{0}\epsilon}(x_{1},r_{1}) \cap S^{C_{0}\epsilon}(x_{2},r_{2})\\
    &\subset B(x_{1} + r_{1}e(x_{1},x_{2}),C\sqrt{\epsilon/t})\\
    &\subset B(x_{1} + r_{1}(\cos(w_{1}),\sin (w_{1})),C\sqrt{\epsilon/t}).
  \end{align*}
  This completes the proof of the lemma.
\end{proof}

\section{A measure-theoretic variant of Wolff's incidence bound for tangencies}

One of the main technical innovations in T. Wolff's paper \cite{Wo3} is Lemma 1.4. It bounds the number of incomparable $(\delta,t)$-rectangles, which are tangent to a family of circles. To make the statement precise, we recall some definitions from \cite{Wo3}:

\begin{definition}[Bipartite sets]
  Let $t > 0$. A subset of $\mathbf{B}_{0}$ (recall Definition \ref{B0}) is called \emph{$t$-bipartite}, if it can be written as a disjoint union $\mathcal{W} \cup \mathcal{B}$ ("white" and "black" points) with
  \begin{equation*}
    \max\{\diam(\mathcal{B}),\diam(\mathcal{W})\} \leq t \leq \dist(\mathcal{W},\mathcal{B}) \quad \text{and} \quad \diam(\mathcal{W} \cup \mathcal{B}) \leq 100t.
  \end{equation*}
\end{definition}

We will make an attempt to denote finite $t$-bipartite sets by $\mathcal{W} \cup \mathcal{B}$, and infinite ones by $W \cup B$. It should not cause confusion that $B$ is also a common letter for a ball (the only concrete black set is defined below \eqref{ratio}, and it is in fact an annulus).

\begin{definition}[Type]\label{def:type}
  Assume that $W \cup B \subset \mathbf{B}_{0}$ is a $t$-bipartite set, let $\mu$ be a finite measure on $\R^{3}$, and let $\mathfrak{m},\mathfrak{n} > 0$ be positive real numbers. A $(\delta,t)$-rectangle $R \subset \R^{2}$ is of \emph{type} $\geq \m$ with respect to $\mu,W$ if
  \begin{displaymath}
    \mu(\{(x,r) \in W : S(x,r) \text{ is tangent to } R\}) \geq \m.
  \end{displaymath}
Similarly, $R$ is of type $\geq \n$ with respect to $\mu,B$ if
  \begin{displaymath}
    \mu(\{(x,r) \in B : S(x,r) \text{ is tangent to } R\}) \geq \n.
  \end{displaymath}
  We also define that $R$ is of type $(\geq \m,\geq \n)$ with respect to $\mu,W,B$ if $R$ satisfies both of the requirements above simultaneously. We often omit writing "with respect to $\mu,W,B$", if these parameters are clear from the context. 
\end{definition}

\begin{lemma}[\mbox{\cite[Lemma 1.4]{Wo3}}]\label{mainWolff}
  Let $0 < t < 1$, $0 < \delta \leq ct$, where $c > 0$ is a small absolute constant, let $\m,\n \in \N$, let $\mathcal{W} \cup \mathcal{B}$ be a finite $t$-bipartite set, and let $\mu := \mathcal{H}^{0}|_{\mathcal{W} \cup \mathcal{B}}$. If $\epsilon > 0$, then there is a constant $C_{\epsilon} \geq 1$ such that the cardinality of any collection of pairwise incomparable $(\delta,t)$-rectangles of type $(\geq \m,\geq \n)$ with respect to $\mu,\mathcal{W},\mathcal{B}$ is bounded by
  \begin{displaymath}
    C_{\epsilon}(|\mathcal{W}||\mathcal{B}|)^{\epsilon} \left( \left(\frac{|\mathcal{W}||\mathcal{B}|}{\m\n} \right)^{3/4} + \frac{|\mathcal{W}|}{\m} + \frac{|\mathcal{B}|}{\n} \right).
  \end{displaymath}
\end{lemma}

The purpose of this section is to deduce a variant of Wolff's lemma for arbitrary finite measures; the proof is a straightforward reduction to Lemma \ref{mainWolff}. 

\begin{lemma}\label{wolffMeasures}
 Let $0 < t < 1$, $0 < \delta \leq ct$, where $c > 0$ is a small absolute constant, let $W \cup B \subset \mathbf{B}_{0}$ be a $t$-bipartite set, and let $\mu$ be a probability measure on $\R^{3}$, and let $\m,\n \in (0,1]$. For $\epsilon > 0$, there exists a constant $C_{\epsilon} \geq 1$ such that the cardinality of any set of pairwise incomparable $(\delta,t)$-rectangles of type $(\geq \m, \geq \n)$ is bounded by
  \begin{equation}\label{form47}
    C_{\epsilon}(\m\n\delta)^{-\epsilon} \left( \left(\frac{\mu(W)\mu(B)}{\m\n} \right)^{3/4} + \frac{\mu(W)}{\m} + \frac{\mu(B)}{\n} \right).
  \end{equation}
\end{lemma}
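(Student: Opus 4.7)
The strategy is to reduce Lemma \ref{wolffMeasures} to the counting-measure version, Lemma \ref{mainWolff}, via dyadic pigeonholing on the $\mu$-mass of $\delta$-balls around net points. Because the discretization only introduces scales between $\delta$ and $1$, the logarithmic losses can be absorbed into the $(mn\delta)^{-\epsilon}$ factor after readjusting $\epsilon$.

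First I would fix maximal $\delta$-separated nets $\tilde{W} \subset W$ and $\tilde{B} \subset B$; since both lie in $B_{0}$, they satisfy $|\tilde{W}|, |\tilde{B}| \lesssim \delta^{-3}$. Partition them dyadically by $\mu$-mass at scale $\delta$: for $k \geq 0$ set
\[
  \tilde{W}_{k} := \{z_{i} \in \tilde{W} : 2^{-k-1} < \mu(B(z_{i},2\delta) \cap W) \leq 2^{-k}\},
\]
and similarly for $\tilde{B}_{k}$. The bounded multiplicity of the cover $\{B(z_{i},2\delta)\}_{z_{i} \in \tilde{W}}$ gives $|\tilde{W}_{k}| \lesssim 2^{k} \mu(W)$ and $|\tilde{B}_{k}| \lesssim 2^{k} \mu(B)$.

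Now fix any rectangle $R$ of $\mu$-type $(\geq m, \geq n)$. Every $z \in W$ whose circle is tangent to $R$ lies within $\delta$ of some $z_{i} \in \tilde{W}$, and then $S(z_{i})$ is $C'$-tangent to $R$ for some absolute constant $C' \geq C_{0}$. Writing $\tilde{W}(R)$ for the set of such net-points,
\[
  m \leq \mu(\{z \in W : S(z) \text{ tangent to } R\}) \lesssim \sum_{z_{i} \in \tilde{W}(R)} \mu(B(z_{i},2\delta) \cap W).
\]
Truncating at $K = \lceil \log_{2}(C|\tilde{W}|/m) \rceil \lesssim \log(1/(m\delta))$ discards at most $m/2$ from the right-hand side, so pigeonholing over $k \leq K$ yields some $k_{W}(R) \leq K$ with $|\tilde{W}(R) \cap \tilde{W}_{k_{W}}| \gtrsim m 2^{k_{W}}/K$, and symmetrically some $k_{B}(R)$. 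Partition the original $C_{0}$-incomparable family into $(K+1)^{2}$ subfamilies by $(k_{W},k_{B})$, further split each into a bounded number of $C'$-incomparable subfamilies, and apply Lemma \ref{mainWolff} to the counting measure on $\tilde{W}_{k_{W}} \cup \tilde{B}_{k_{B}}$ with thresholds $m' \gtrsim m 2^{k_{W}}/K$ and $n' \gtrsim n 2^{k_{B}}/K$. Substituting $|\tilde{W}_{k_{W}}| \lesssim 2^{k_{W}}\mu(W)$ (and the $B$-analogue) cancels the $2^{k_{W}}, 2^{k_{B}}$ factors against $m'$ and $n'$, leaving a bound of the form $K^{O(1)}\bigl((\mu(W)\mu(B)/(mn))^{3/4} + \mu(W)/m + \mu(B)/n\bigr)$. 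The prefactor $(|\tilde{W}_{k_{W}}||\tilde{B}_{k_{B}}|)^{\epsilon} \lesssim 2^{(k_{W}+k_{B})\epsilon} \leq (mn\delta)^{-O(\epsilon)}$, and summation over the $(K+1)^{2}$ subfamilies adds another polylog factor; after readjusting $\epsilon$, all polylogarithms in $1/(mn\delta)$ are absorbed, giving \eqref{form47}.

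The main obstacle is the careful bookkeeping of the tangency constant during discretization: one must ensure (i) the constant $C'$ arising when $\mu$-tangent points are replaced by their net-centres is absolute, and (ii) Lemma \ref{mainWolff} can be invoked with a tangency constant $C'>C_{0}$, for which the standard device is to split a $C_{0}$-incomparable collection into $O(1)$ many $C'$-incomparable subcollections. Apart from this, the argument is routine dyadic pigeonholing.
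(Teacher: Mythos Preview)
Your proposal is correct and follows essentially the same route as the paper: discretize at scale $\delta$ (you use a $\delta$-net, the paper uses dyadic $\delta$-cubes), stratify the net points by dyadic $\mu$-mass, pigeonhole each rectangle into a pair of mass-levels, and apply Lemma~\ref{mainWolff} to the resulting finite bipartite set; the cancellation of the $2^{k}$ factors and the absorption of the polylogarithmic losses into $(mn\delta)^{-\epsilon}$ are exactly as in the paper. Your explicit remark about the tangency constant under discretization is the same observation the paper makes in its first paragraph, and the only cosmetic difference is that the paper selects a single largest $(i,j)$-class rather than summing over all $(k_{W},k_{B})$.
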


\begin{proof}
  Assume without loss of generality that $\delta > 0$ is a small dyadic number, and denote by $\calD_{\delta}$ the dyadic cubes in $\R^{3}$ of side-length $\delta$. For $i,j \geq 0$, let $\calD^{W}_{i} := \{Q \in \calD_\delta : 2^{-i - 1} \leq \mu(Q \cap W) \leq 2^{-i}\}$ and $\calD^{B}_{j} := \{Q \in \calD_\delta : 2^{-j - 1} \leq \mu(Q \cap B) \leq 2^{-j}\}$. Note that if $R$ is a $(\delta,t)$-rectangle $C_{0}$-tangent to any circle $S(x,r)$ with $(x,r) \in Q$, with $Q \cap \mathbf{B}_{0} \neq \emptyset$, then $R$ is $2C_{0}$-tangent to the circle $S(x_{Q},r_{Q})$, where $(x_{Q},r_{Q})$ is the midpoint of $Q$. 

  Let $\calR$ be a maximal collection of incomparable $(\delta,t)$-rectangles of type $(\geq \m,\geq \n)$. Then, for $R \in \calR$, there is a set 
  \begin{displaymath}
    W_{R} := \{(x,r) \in W : S(x,r) \text{ is tangent to } R\} \subset W
  \end{displaymath}
  with $\mu(W_{R}) \geq \m$ such that $R$ is tangent to every circle from $W_{R}$. Let $W_{R}^{i}$ be the set $W_{R}$ intersected with the union of the cubes in $\calD_{i}^{W}$. Define $B_{R}$ and $B_{R}^{j}$ similarly, using $\calD_{j}^{B}$. Then, there exist $i_{R},j_{R} \geq 1$ such that
  \begin{displaymath}
    \mu(W_{R}^{i_{R}}) \gtrsim \frac{\m}{i_{R}^{2}} \quad \text{and} \quad \mu(B_{R}^{j_{R}}) \gtrsim \frac{\n}{j_{R}^{2}}.
  \end{displaymath}
  Since $W \subset \mathbf{B}_{0}$, the total $\mu$-measure of cubes in $\calD^{W}_{i}$ is at most a constant times $\delta^{-3}2^{-i}$. Therefore,
  \begin{displaymath}
    \frac{\m}{i_{R}^{2}} \lesssim \mu(W_{R}^{i_{R}}) \lesssim \delta^{-3}2^{-i_{R}}.
  \end{displaymath}
 Given $\epsilon > 0$, the inequality above implies $i_{R} \leq C_{\epsilon}\delta^{-3\epsilon}\m^{-\epsilon}$ for a constant $C_{\epsilon} \geq 1$ depending only on $\epsilon$. The same reasoning applies to $j_{R}$, with $\m$ replaced by $\n$. Now, for each $i \in \{1,\ldots,C_{\epsilon}\delta^{-3\epsilon}\m^{-\epsilon}\}$ and $j \in \{1,\ldots,C_{\epsilon}\delta^{-3\epsilon}\n^{-\epsilon}\}$, we define
  \begin{displaymath}
    \mathcal{R}^{(i,j)} := \{R \in \mathcal{R} : i_{R} = i \text{ and } j_{R} = j\}.
  \end{displaymath}
  Then we pick $(i,j)$ such that $\mathcal{R}^{(i,j)} =: \calR'$ is the largest to obtain
  \begin{equation}\label{form46}
    |\calR'| \gtrsim_{\epsilon} |\calR| \cdot \delta^{6\epsilon} \m^{\epsilon} \n^{\epsilon}
  \end{equation}
  With these values of $i,j$, denote by $\mathcal{W}^{i}$ and $\mathcal{B}^{j}$ the midpoints of the cubes in $\calD_{i}^{W}$ and $\calD_{j}^{B}$, respectively.

  Fix a rectangle $R \in \calR'$. By the definition, and recalling that $i \in \{1,\ldots,C_{\epsilon}\delta^{-3\epsilon}m^{-\epsilon}\}$,
  \begin{displaymath}
    \mu(W^{i}_{R}) \gtrsim \frac{m}{i^{2}} \gtrsim_{\epsilon} \m^{1 + 2\epsilon}\delta^{6\epsilon}.
  \end{displaymath}
  Since $\mu(Q \cap W^{i}_{R}) \leq \mu(Q \cap W) \sim 2^{-i}$ for all $Q \in \calD_{i}^{W}$, we infer that at least $\overline{\m} \gtrsim_{\epsilon} \max\{1,2^{i}\m^{1 + 2\epsilon}\delta^{6\epsilon}\}$ cubes $Q \in \calD_{i}^{W}$ intersect $W_{R}^{i}$, where $\overline{\m} \in \N$. As discussed above, this means that $R$ is $2C_{0}$-tangent to $S(x_{Q},r_{Q})$ for each of these cubes $Q$; note that $(x_{Q},r_{Q}) \in \calW^{i}$ for these $Q$, by the definition of $\calW^{i}$. The same reasoning applies to $B_{R}^{j}$, and the conclusion is that $R$ is of type 
  \begin{displaymath}
    (\geq \overline{\m},\geq \overline{\n}) \quad \text{for} \quad \overline{\m} \gtrsim_{\epsilon} \max\{1,2^{i}m^{1 + 2\epsilon}\delta^{6\epsilon}\} \text{ and } \overline{\n} \gtrsim_{\epsilon} \max\{1,2^{j}n^{1 + 2\epsilon}\delta^{6\epsilon}\},
  \end{displaymath}
  with respect to the $t$-bipartite family $\calW^{i} \cup \calB^{j}$ and the counting measure. We first assume that $2^{i}\m^{1 + 2\epsilon}\delta^{6\epsilon} \geq 1$ and $2^{j}\n^{1 + 2\epsilon}\delta^{6\epsilon} \geq 1$. Then, using Lemma \ref{mainWolff}, we infer that
  \begin{equation}\label{form45}
    |\calR'| \lesssim_{\epsilon} (|\calW^{i}||\calB^{j}|)^{\epsilon} \left( \left(\frac{|\calW^{i}||\calB^{j}|}{2^{i + j}(\m\n)^{1 + 2\epsilon}\delta^{12\epsilon}} \right)^{3/4} + \frac{|\calW^{i}|}{2^{i}\m^{1 + 2\epsilon}\delta^{6\epsilon}} + \frac{|\calB^{j}|}{2^{j}\n^{1 + 2\epsilon}\delta^{6\epsilon}} \right).
  \end{equation}
  To make the right hand side of \eqref{form45} look more like the right hand side of \eqref{form47}, note that
  \begin{displaymath}
    |\calW^{i}| = \sum_{(x_{Q},r_{Q}) \in \calW^{i}} \frac{2^{-i}}{2^{-i}} \lesssim 2^{i} \sum_{(x_{Q},r_{Q}) \in \calW^{i}} \mu(Q \cap W) \lesssim 2^{i}\mu(W) \quad \text{and} \quad |\calB^{j}| \lesssim 2^{j}\mu(B).
  \end{displaymath}
  Combined with \eqref{form46} and \eqref{form45}, this completes the proof of \eqref{form47} in the case $2^{i}\m^{1 + 2\epsilon}\delta^{6\epsilon} \geq 1$ and $2^{j}\n^{1 + 2\epsilon}\delta^{6\epsilon} \geq 1$. Let us consider the case $2^{i}\m^{1 + 2\epsilon}\delta^{6\epsilon} < 1$ and $2^{j}\n^{1 + 2\epsilon}\delta^{6\epsilon} < 1$, leaving the intermediate cases to the reader. Then $\overline{\m} = 1 = \overline{\n}$, and instead of \eqref{form45}, we may infer from Lemma \ref{mainWolff} that
 \begin{displaymath}  |\calR'| \lesssim_{\epsilon} (|\calW^{i}||\calB^{j}|)^{\epsilon} \left( \left(|\calW^{i}||\calB^{j}| \right)^{3/4} + |\calW^{i}| + |\calB^{j}| \right). \end{displaymath} 
 To conclude the proof of \eqref{form47} from here, one then uses the inequalities $|\mathcal{W}^{i}| \lesssim 2^{i}\mu(W) \leq \mu(W)\m^{-1 - 2\epsilon}\delta^{-6\epsilon}$ and $|\mathcal{B}^{j}| \lesssim 2^{j}\mu(B) \leq \mu(B)\n^{-1 - 2\epsilon}\delta^{-6\epsilon}$, and also $\mu(W),\mu(B) \leq 1$. \end{proof}

\section{A measure-theoretic variant of Schlag's lemma for circles}

Lemma \ref{incidenceCircles} below is the main tool in the proof of Theorem \ref{circleUnion} about unions of circles. It is a continuous version of W. Schlag's weak type inequality in \cite[Lemma 8]{Sc}. The proof follows the same pattern, but the statement is a bit stronger (involving measures, not finite sets), and the argument is a bit simpler; for example, we can omit the case distinction between "$\delta \sim \epsilon$" and "$\delta \gg \epsilon$" altogether, and also the selection of a random $\epsilon$-separated subset; see the proof in \cite{Sc}. 

Aside from being crucial in the proof Theorem \ref{circleUnion}, Lemma \ref{incidenceCircles} is also used within the proof of Lemma \ref{incidenceSineCurves}, which is finally the key ingredient in the proof of the main result, Theorem \ref{main}. 

Recall that $S^{\delta}(x,r)$ stands for the $\delta$-neighbourhood of the planar circle $S(x,r)$ and
\begin{displaymath}
  \Delta'(z) = ||x| - |r||, \qquad z = (x,r).
\end{displaymath}
Given a finite measure $\mu$ on $\R^{3}$ and $\delta > 0$, define the following \emph{multiplicity} function $m_{\delta}^{\mu} \colon \R^{2} \to [0,\mu(\R^{3})]$:
\begin{equation}\label{multcircles}
  m^{\mu}_{\delta}(w) = \mu(\{z' \in \R^{3} : w \in S^{\delta}(z')\}).
\end{equation}

\begin{lemma}\label{incidenceCircles}
  Fix $s \in (0,1]$, $\delta > 0$, $\eta > 0$, $\mathbf{C} \geq 1$, and $A \geq C_{\eta,\mathbf{C},s} \cdot \delta^{-\eta}$, where $C_{\eta,\mathbf{C},s} \geq 1$ is a large constant depending only on $\eta,\mathbf{C}$, and $s$. Let $\mu$ be a probability measure on $\R^{3}$ satisfying the Frostman condition $\mu(B(z,r)) \leq \mathbf{C} r^{s}$ for all $z \in \R^{3}$ and $r > 0$, and with $K := \spt \, \mu \subset \mathbf{B}_{0}$.  Then, for $\lambda \in (0,1]$, there is a set $G(A,\delta,\lambda) \subset K$ with 
  \begin{displaymath}
    \mu(K \setminus G(A,\delta,\lambda)) \leq A^{-s/3}
  \end{displaymath}
  such that the following holds for all $z \in G(A,\delta,\lambda)$:
  \begin{displaymath}
    |S^{\delta}(z) \cap \{w : m^{\mu}_{\delta}(w) \geq A^{s}\lambda^{-2s} \delta^{s}\}| \leq \lambda |S^{\delta}(z)|.
  \end{displaymath}
\end{lemma}

\begin{proof} We start by remarking that the lemma is trivial for all $\delta \gtrsim_{\eta,\mathbf{C},s} 1$ and for all $\lambda \in (0,1]$. Indeed, in this case we may choose the lower bound $C_{\eta,\mathbf{C},s}$ for $A$ so large that $A^{s}\lambda^{-2s}\delta^{s} > 1$. This has the effect that $m_{\delta}^{\mu}(w) \geq A^{s}\lambda^{-2s}\delta^{s}$ can never hold, since $m_{\delta}^{\mu}(w) \leq \mu(\R^{3}) = 1$. So, in the sequel, we may assume that $\delta$ is small, in a manner depending on $\eta,\mathbf{C},s$. For similar reasons, we may assume that $A \leq \delta^{-1}$: otherwise $m_{\delta}^{\mu} \leq 1 < A^{s}\lambda^{-2s}\delta^{s}$. 

  Assume then to the contrary there exists a dyadic number $\delta \in 2^{-\N}$, and a number 
  \begin{equation}\label{form107}
    m \geq A^{s}\lambda^{-2s}\delta^{s}
  \end{equation}
  such that 
  \begin{equation}\label{form49a}
    |S^{\delta}(z) \cap \{w : m_{\delta}^{\mu}(w) \geq m\}| \geq \lambda |S^{\delta}(z)|
  \end{equation}
  for every $z \in D \subset K$, where
  \begin{equation*}
    \mu(D) > A^{-s/3}.
  \end{equation*}
  This will result in a contradiction, if $C_{\eta,\mathbf{C},s}$ in the assumption $A \geq C_{\eta,\mathbf{C},s} \cdot \delta^{-\eta}$ is sufficiently large. For the purposes of induction, we assume that $\delta \in 2^{-\N}$ is the largest dyadic number failing the statement of the lemma for some $\lambda \in (0,1]$ and $A \geq C_{\eta,\mathbf{C},s} \cdot \delta^{-\eta}$ (as we already observed in the first paragraph of the proof, the statement is trivial for $\delta \gtrsim_{\mathbf{C},\eta,s} 1$, for all $\lambda \in (0,1]$, and all $A \geq C_{\eta,\mathbf{C},s}$, so the "base case" of the induction is valid.)
  
  For $z \in \R^{3}$ and dyadic numbers $\epsilon,t \in [\delta,1]$, define
  \begin{equation*}
    K_{\epsilon,t}(z) := \{z' \in K \colon S^{\delta}(z) \cap S^{\delta}(z') \neq \emptyset, \: t \leq |z - z'| < 2t, \text{ and }  \epsilon \leq \Delta'(z - z') < 2\epsilon\}.
  \end{equation*}
  The case $\epsilon = \delta$ is a little special: there we modify the definition so that the two-sided inequality $\epsilon \leq \Delta'(z - z') < 2\epsilon$ is replaced by simply $\Delta'(z - z') < 2\epsilon = 2\delta$. Now, define the restricted multiplicity function
  \begin{displaymath}
    m^{\mu}_{\delta}(w | U) := \mu(\{z' \in U : w \in S^{\delta}(z')\}), \qquad U \subset \R^{3}.
  \end{displaymath}
  Applied with $U := K_{\epsilon,t}(z)$, as we will do in a moment, this multiplicity function only takes into accounts those $z'$, which are at distance $t$ from, and $\epsilon$-tangent to, $z$. If $z \in D$ is fixed, $w \in \R^{2}$ is such that $m^{\mu}_{\delta}(w) \geq m$ (as in \eqref{form49a}), and $C \geq 1$ is a large enough absolute constant (to be determined later), then we consider the inequality
  \begin{align*}
    C_{\eta,\mathbf{C},s}^{s}\delta^{s} \stackrel{\eqref{form107}}{\leq} m \leq m^{\mu}_{\delta}(w) &\leq \mu(B(z,C\delta)) + \mathop{\sum_{t \in [C\delta,1]}}_{\epsilon \in [\delta,1]} m^{\mu}_{\delta}(w | K_{\epsilon,t}(z))\\
    &\leq \mathbf{C}(C\delta)^{s} + \mathop{\sum_{t \in [C\delta,1]}}_{\epsilon \in [\delta,1]} m^{\mu}_{\delta}(w | K_{\epsilon,t}(z)),
  \end{align*} 
  where $\epsilon$ and $t$ only run over dyadic values. We take $C_{\eta,\mathbf{C},s} \geq 1$ (at least) so large that $\mathbf{C}C^{s} \leq \tfrac{1}{2}C^{s}_{\eta,\mathbf{C},s}$. Then the second term in the display above must dominate the left hand side. This implies (after a few rounds of pigeonholing) that there exist dyadic numbers $\epsilon \in [\delta,1]$ and $t \in [C\delta,1]$, $m \lessapprox \bar{m} \leq m$ and $\lambda \lessapprox \bar{\lambda} \leq \lambda$, and a subset $\bar{D} \subset D$ with $\mu(\bar{D}) \gtrapprox \mu(D)$, such that the following holds for all $z \in \bar{D}$:
  \begin{equation}\label{barDa}
    |H^{\delta}(z)| := |S^{\delta}(z) \cap \{w :  m^{\mu}_{\delta}(w | K_{\epsilon,t}(z)) \geq \bar{m}\}| \geq \bar{\lambda}|S^{\delta}(z)|.
  \end{equation}
  We recall from Section \ref{s:notation} that the notation $C_{1} \lessapprox C_{2}$ signifies an inequality of the form
  \begin{displaymath}
    C_{1} \leq C(\log(1/\delta))^{C}C_{2}
  \end{displaymath}
  for some absolute constant $C \geq 1$. Furthermore, by $C_1 \approx C_2$ was the same as $C_1 \lessapprox C_2 \lessapprox C_1$. For the rest of the proof, the numbers $t$ and $\epsilon$ will be the ones we found above, and we abbreviate 
  \begin{equation}\label{form109} K_{\epsilon,t}(z) =: K(z). \end{equation}

  We now make a brief heuristic digression. By the preceding discussion, we have found that a large fraction of the "high density" part of $S^{\delta}(z)$, for $z \in \bar{D}$, is caused by points $z'$, which are at roughly distance $t \gg \delta$ from $z$, and moreover the tangency between $S(z)$ and $S(z')$ is roughly constant, namely $\epsilon$. This means that the circles $S(z)$ and $S(z')$ are $\epsilon$-incident, and hence they are tangent to an $(\epsilon,t)$-rectangle by Lemma \ref{incidenceTangency}. To complete the proof, it suffices to count, just how many incomparable $(\epsilon,t)$-rectangles we can find this way (by varying $z \in \bar{D}$ and $z' \in K_{\epsilon,t}(z)$), and then compare the figure with the upper bound given by Lemma \ref{wolffMeasures} to reach a contradiction. If $\epsilon = \delta$, this is straightforward, but if $\epsilon \gg \delta$, an additional geometric argument is needed: in brief, we will show that a perfect analogue of \eqref{barDa} also holds at scale $\epsilon$, for every $z \in \bar{D}$: see \eqref{form64} below, and note in particular that \eqref{form64} and \eqref{barDa} are essentially the same, if $\epsilon = \delta$. In a sense, the argument leading to \eqref{form64} is just a complicated way of saying that "$\epsilon = \delta$ without loss of generality".

  We continue with the proof. Fix $z \in \bar{D}$, and recall that \eqref{barDa} holds. We claim that there exists a dyadic number $\nu = \nu(z) \in \{1,\ldots,\epsilon/\delta\}$, and an absolute constant $C_{1} \geq 2$, such that
  \begin{equation}\label{form62}
    |S^{\epsilon}(z) \cap \{w : m^{\mu}_{C_{1}\epsilon}(w|K(z)) \gtrsim \nu\bar{m}\}| \gtrapprox \frac{\bar{\lambda}\epsilon}{\nu \delta} \cdot |S^{\epsilon}(z)|.
  \end{equation}
  To see this, we need to recall the geometric fact from Lemma \ref{circleLemma} that if $z,z' \in \R^{3}$ and with $|z - z'| \sim t$ and $\Delta'(z - z') \sim \epsilon$, then $S^{\delta}(z) \cap S^{\delta}(z')$ can be covered by two $\delta$-neighbourhoods of arcs on $S(z)$, each of diameter at most a constant times $C\delta/\sqrt{\epsilon t}$. (If $\epsilon = \delta$, then we only have the one-sided information $\Delta'(z - z') < 2\epsilon = 2\delta$, but the geometric statement above remains valid, even with "two arcs" replaced by "one arc".) Motivated by this, we first divide $S(z)$ into \emph{short arcs} $J_{j}(z)$ of length $C\delta/\sqrt{\epsilon t}$. We write $J_{j}^{\delta}(z)$ for the $\delta$-neighbourhood of $J_{j}(z)$. Since $|J^{\delta}_{j}(z)| \lesssim \delta^{2}/\sqrt{\epsilon t}$, we may, by \eqref{barDa}, find at least a constant times
  \begin{displaymath}
    \frac{\bar{\lambda}|S^{\delta}(z)|}{\delta^{2}/\sqrt{\epsilon t}} \sim \frac{\bar{\lambda}\sqrt{\epsilon t}}{\delta}
  \end{displaymath}
  indices $j$ such that $H^{\delta}(z) \cap J_{j}^{\delta}(z) \neq \emptyset$. Denote these indices by $\calJ(z)$, and for each $j \in \calJ(z)$, pick a point $w_{j} \in H^{\delta}(z) \cap J^{\delta}_{j}(z)$. Thus $m^{\mu}_{\delta}(w_{j}|K(z)) \geq \bar{m}$ for $j \in \calJ(z)$. Throw away at most half of the points $w_{j}$ to ensure that $|w_{i} - w_{j}| \geq C\delta/\sqrt{\epsilon t}$ for all $i \neq j$. Then, the sets
  \begin{displaymath}
    K_{j}(z) := \{z' \in K(z) : w_{j} \in S^{\delta}(z) \cap S^{\delta}(z')\}
  \end{displaymath}
  with 
  \begin{equation}\label{form86}
    \mu(K_{j}(z)) = m_{\delta}^{\mu}(w_{j}|K(z)) \geq \bar{m}
  \end{equation}
  have bounded overlap:
  \begin{equation}\label{bdOverlap}
    \sum_{j \in \calJ(z)} \chi_{K_{j}(z)}(z') \leq 2, \qquad z' \in K(z).
  \end{equation}
  Indeed, if $z' \in K_{j}(z)$, then $w_{j} \in S^{\delta}(z) \cap S^{\delta}(z')$, which implies $w_{j}$ has to lie in one of the at most two sets of diameter at most $C\delta/\sqrt{\epsilon t}$ covering the intersection $S^{\delta}(z) \cap S^{\delta}(z')$. By the separation of the points $w_{j}$, this can happen for at most two values of $j$.

  Next, we group the points $w_{j}$ inside sets of somewhat larger diameter (than $J_{j}(z)$). To this end, divide $S(z)$ into \emph{long arcs} $I_{i}(z)$ of length $C\sqrt{\epsilon/t}$. By adjusting the lengths of both long and short arcs slightly, we may assume that the long arcs $I_{i}(z)$ are sub-divided further into
  \begin{displaymath}
    \frac{\calH^{1}(I_{i}(z))}{\calH^{1}(J_{j}(z))} \leq \frac{\sqrt{\epsilon/t}}{\delta/\sqrt{\epsilon t}} = \frac{\epsilon}{\delta}
  \end{displaymath}
  short arcs $J_{j}(z)$. For each long arc $I_{i}(z)$, write
  \begin{displaymath}
    k(i) := \card \{j \in \calJ(z) : w_{j} \in I^{\delta}_{i}(z)\},
  \end{displaymath} 
  where $I_{i}^{\delta}(z)$ is the $\delta$-neighbourhood of $I_{i}(z)$. Since $0 \leq k(i) \leq \epsilon/\delta$, there is a dyadic number $\nu = \nu(z) \in \{1,\ldots,\epsilon/\delta\}$ such that $\gtrapprox |\calJ(z)| \gtrsim \bar{\lambda}\sqrt{\epsilon t}/\delta$ points $w_{j}$ are contained in the union of the sets $I^{\delta}_{i}(z)$ with $\nu \leq k(i) \leq 2\nu$. Denote the indices of these sets $I^{\delta}_{i}(z)$ by $\calI(z)$. Thus, if $i \in \calI(z)$, then 
  \begin{equation}\label{form87}
    \card\{j \in \calJ(z) : w_{j} \in I_{i}^{\delta}(z)\} \sim \nu.
  \end{equation}
  Since there are $\gtrapprox |\calJ(z)|$ points in total, we conclude that
  \begin{equation}\label{form88}
    |\calI(z)| \gtrapprox \frac{|\calJ(z)|}{\nu} \gtrsim \frac{\bar{\lambda} \sqrt{\epsilon t}}{\nu \delta}.
  \end{equation}

  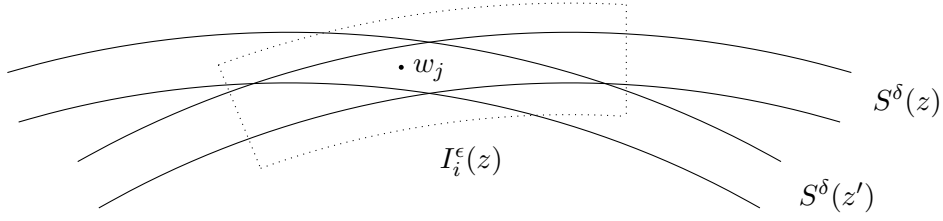
\begin{figure}[t!]
  \begin{center}
    \begin{tikzpicture}[scale=1.85]
      \draw [domain=0:5.5,smooth,variable=\x] plot ({\x},{sqrt{(50-(\x-2)*(\x-2))}});
      \draw [domain=0.08:5.35,smooth,variable=\x] plot ({\x},{sqrt{(45-(\x-2)*(\x-2))}});
      \node[align=center] at (5.9,5.9) {$S^\delta(z')$};

      \draw [domain=0.5:6,smooth,variable=\x] plot ({\x},{sqrt{(50-(\x-4)*(\x-4))}});
      \draw [domain=0.65:5.92,smooth,variable=\x] plot ({\x},{sqrt{(45-(\x-4)*(\x-4))}});
      \node[align=center] at (6.4,6.58) {$S^\delta(z)$};

      \fill (2.8,6.82) circle [radius=0.5pt];
      \node[align=center] at (3,6.8) {$w_j$};

      \draw [domain=1.5:4.4,dotted,smooth,variable=\x] plot ({\x},{sqrt{(53-(\x-4)*(\x-4))}});
      \draw [domain=1.8:4.4,dotted,smooth,variable=\x] plot ({\x},{sqrt{(42-(\x-4)*(\x-4))}});
      \draw [dotted] (1.5,6.8374) -- (1.8,6.096);
      \draw [dotted] (4.4,7.2691) -- (4.4,6.4684);
      \node[align=center] at (3.3,6.15) {$I_i^\epsilon(z)$};
    \end{tikzpicture}
    \caption{An illustration for the proof of Lemma \ref{incidenceCircles}.}
    \label{fig1}
  \end{center}
  \end{figure}

  Fix $i \in \calI(z)$ and $w_{j} \in I_{i}^{\delta}(z)$. We claim that if $z' \in K_{j}(z)$, then 
  \begin{equation}\label{form85}
    I_{i}^{\epsilon}(z) \subset S^{C_{1}\epsilon}(z'),
  \end{equation}
  for some $C_{1} \geq 1$ large enough; see Figure \ref{fig1}. The reason is that $w_{j} \in S^{\delta}(z) \cap S^{\delta}(z')$ and $S(z),S(z')$ are $\epsilon$-incident, so for large enough $C_{1} \geq 1$, they are both $C_{1}$-tangent to the $(\epsilon,t)$-rectangle $I_{i}^{\epsilon}(z) \ni w_{j}$, see Lemma \ref{incidenceTangency} for a similar statement (and its proof in Wolff's paper for more details). Now, for a fixed index $i \in \calI$, and for any $w \in I_{i}^{\epsilon}(z)$, the bounded overlap of the sets $K_{j}(z)$ yields
  \begin{align*}
    m_{C_{1}\epsilon}^{\mu}(w|K(z)) &= \mu(\{z' \in K(z) : w \in S^{C_{1}\epsilon}(z')\})\\
    &\stackrel{\eqref{bdOverlap}}{\geq} \frac{1}{2} \mathop{\sum_{j \in \calJ(z)}}_{w_{j} \in I^{\delta}_{i}(z)} \int_{K_{j}(z)} \chi_{S^{C_{1}\epsilon}(z')}(w) \dd\mu z'\\
    &\stackrel{\eqref{form85}}{=} \mathop{\sum_{j \in \calJ(z)}}_{w_{j} \in I^{\delta}_{i}(z)} \mu(K_{j}(z)) \stackrel{\eqref{form86} \& \eqref{form87}}{\gtrsim} \nu \bar{m}.
  \end{align*}
  Thus, we have proven that whenever $i \in \calI(z)$, the set $I^{\epsilon}_{i}(z) \subset S^{\epsilon}(z)$ is contained in the region where $m_{C_{1}\epsilon}^{\mu}(w|K(z)) \gtrsim \nu \bar{m}$. Recalling \eqref{form88}, this proves that
  \begin{equation}\label{form108}
    |S^{\epsilon}(z) \cap \{w : m_{C_{1}\epsilon}^{\mu}(w|K(z)) \gtrsim \nu \bar{m}\}| \geq |\calI(z)| |I^{\epsilon}_{i}(z)| \stackrel{\eqref{form88}}{\gtrapprox} \frac{\bar{\lambda}\sqrt{\epsilon t}}{\nu \delta} \cdot \sqrt{\epsilon/t} \cdot \epsilon \sim \frac{\bar{\lambda}\epsilon}{\nu \delta} |S^{\epsilon}(z)|,
  \end{equation}
  which is precisely \eqref{form62}. 

  Recall that the dyadic number $\nu = \nu(z) \leq \epsilon/\delta$ still depends on the point $z \in \bar{D}$, but there are only $\lessapprox 1$ possible choices for $\nu(z)$. We replace $\bar{D}$ by a subset of measure $\gtrapprox \mu(\bar{D}) \gtrapprox A^{-s/3}$ to make the choice uniform. Hence, we may assume that \eqref{form62} holds for all $z \in \bar{D}$, for some fixed dyadic number $\nu \in \{1,\ldots,\epsilon/\delta\}$. We now will re-write \eqref{form62} slightly, in such a way that the inequality looks more like \eqref{form49a}, only at scale (roughly) $\epsilon$ instead of $\delta$. For this purpose, we denote
  \begin{equation}\label{form95}
    \lambda_{\epsilon} := \log^{-C}(1/\delta) \frac{\bar{\lambda}\epsilon}{\nu \delta} \quad \text{and} \quad A_{\epsilon} := \log^{C}(1/\delta) \left(\frac{A \epsilon}{\nu \delta} \right) \geq \log^{C}(1/\delta)A,
  \end{equation}
  where $C \geq 1$ is a suitable constant. We may deduce from \eqref{form108} that $(\bar{\lambda} \epsilon)/(\nu\delta) \lessapprox 1$, so $\lambda_{\epsilon} \in (0,1]$ if the constant $C \geq 1$ in the definition above is chosen sufficiently large.
  
 Recall from \eqref{form107}, and the choices $\bar{m} \approx m$ and $\bar{\lambda} \approx \lambda$ just above \eqref{barDa}, that $\bar{m} \gtrapprox A^{s}\bar{\lambda}^{-2s}\delta^{s}$. We define $\m_{\epsilon} := \nu\bar{m}/C$. Then, using the lower bound for $\bar{m}$, and also that $s \in (0,1]$, we record that
  \begin{equation}\label{form63}
    \m_{\epsilon} = \frac{\nu \bar{m}}{C} \gtrapprox \nu A^{s} \bar{\lambda}^{-2s}\delta^{s} \gtrapprox \left(\frac{A \epsilon}{\nu \delta} \right)^{s} \lambda^{-2s}_{\epsilon} \epsilon^{s} \approx A_{\epsilon}^{s}\lambda^{-2s}_{\epsilon}\epsilon^{s}.
  \end{equation}
  Now, if $C$ (in both $\m_{\epsilon} = \nu\bar{m}/C$ and the definition of $\lambda_{\epsilon}$) is large enough, \eqref{form62} implies
  \begin{equation}\label{form64}
    |S^{\epsilon}(z) \cap \{w : m^{\mu}_{C_{1}\epsilon}(w|K(z)) \geq \m_{\epsilon}\}| \geq 2\lambda_{\epsilon} |S^{\epsilon}(z)|, \qquad z \in \bar{D}.
  \end{equation}
  Now \eqref{form63}-\eqref{form64} look like analogues of \eqref{form107}-\eqref{form49a}, only at the (possibly) larger scale $\epsilon$.
   
  Fix a large absolute constant $N \in 2^{\N}$, whose precise value will be determined later, and will only depend on the size of the absolute constant $C_{1} \geq 1$ chosen at \eqref{form85}. Note (using $\nu \leq \epsilon/\delta$ and $\epsilon \geq \delta$) that 
  \begin{displaymath}
    A_{\epsilon} \geq A \geq C_{\eta,\mathbf{C},s} \cdot \delta^{-\eta} \geq C_{\eta,\mathbf{C},s}(N\epsilon)^{-\eta},
  \end{displaymath}
  and we already observed below \eqref{form95} that $\lambda_{\epsilon} \in (0,1]$, so in particular $\lambda_{\epsilon}/(CN) \in (0,1]$. These facts place the induction hypothesis at our disposal, at scale $N\epsilon \geq 2\delta$. Namely, we know that for all points $z \in G := G(A_{\epsilon},N\epsilon,\lambda_{\epsilon}/CN)$, with $\mu(K \setminus G) \leq A_{\epsilon}^{-s/3}$, the following holds:
  \begin{equation}\label{form65}
    |S^{N\epsilon}(z) \cap \{w : m_{N\epsilon}^{\mu}(w) \geq A^{s}_{\epsilon} (\lambda_{\epsilon}/CN)^{-2s}\epsilon^{s}\}| \leq \frac{\lambda_{\epsilon}}{CN} |S^{N\epsilon}(z)| \leq \lambda_{\epsilon}|S^{\epsilon}(z)|.
  \end{equation}
  In particular, since $A_{\epsilon} \geq \log^{C}(1/\delta) A$, and $\mu(\bar{D}) \gtrapprox A^{-s/3}$, the estimate \eqref{form65} holds for at least half of the points $z \in \bar{D}$ (if $C$ is large enough). We restrict attention to this half, so that \eqref{form64}--\eqref{form65} hold simultaneously for all $z \in \bar{D}$. Writing $M_{\epsilon} := A_{\epsilon}^{s}(\lambda_{\epsilon}/CN)^{-2s}\epsilon^{s}$, it follows that
  \begin{equation}\label{form66}
    |S^{\epsilon}(z) \cap \{w : \m_{\epsilon} \leq m^{\mu}_{C_{1}\epsilon}(w|K(z)) \leq m^{\mu}_{N\epsilon}(w) \leq M_{\epsilon}\}| \geq \lambda_{\epsilon} |S^{C\epsilon}(z)|, \qquad z \in \bar{D}.
  \end{equation}
  It should be noted that 
  \begin{equation}\label{form91}
    \m_{\epsilon} \gtrapprox M_{\epsilon}
  \end{equation}
  by \eqref{form63}.

  Now, \eqref{form66} will will give a lower bound for how many circles $S(z)$ are tangent to each other at resolution $\epsilon$. The proof will be completed by comparing this lower bound against the upper bound given by Lemma \ref{wolffMeasures}. For this purpose, we need to extract two sets 
  \begin{displaymath}
    W \subset \bar{D} \quad \text{and} \quad B \subset \R^{3}
  \end{displaymath}
  satisfying the two $t$-bipartite conditions $\max\{\diam(W),\diam(B)\} \leq t \leq \dist(W,B)$ and $\diam(W \cup B) \leq 100t$. We will moreover do this so that 
  \begin{equation}\label{form92}
    \mu(W) \gtrapprox A^{-s/3} \mu(B),
  \end{equation}
  and 
  \begin{equation}\label{form50}
    m_{C_{1}\epsilon}^{\mu}(w|B \cap K(z)) = m_{C_{1}\epsilon}^{\mu}(w|K(z)), \qquad z \in W, \: w \in \R^{2}.
  \end{equation}
  Finding $W$ and $B$ is straightforward. We first cover $\bar{D}$ by $\leq Ct^{-3}$ balls $B(z_{i},t/10)$, such that the balls $B(z_{i},(2 + \tfrac{1}{10})t)$ have bounded overlap. Next, we discard all those balls with $\mu(\bar{D} \cap B(z_{i},t/10)) \leq t^{3}\mu(\bar{D})/(2C)$, and observe that the union of the remaining balls still contains at least half the $\mu$ measure of $\bar{D}$. Next, among the remaining balls, which now all satisfy
  \begin{equation}\label{measureOfW}
    \mu(\bar{D} \cap B(z_{i},t/10)) \geq \frac{t^{3}\mu(\bar{D})}{2C} \gtrapprox A^{-s/3}t^{3},
  \end{equation}
  we set 
  \begin{displaymath} W := \bar{D} \cap B(z_{i},t/10) \end{displaymath}
  for the ball $B(z_{i},t/10)$, which maximises the ratio $\mu(\bar{D} \cap B(z_{i},t/10))/\mu(B(z_{i},(2 + \tfrac{1}{10})t))$. Since the balls $B(z_{i},(2 + \tfrac{1}{10})t)$ have bounded overlap, it follows that
  \begin{equation}\label{ratio}
    \frac{\mu(W)}{\mu(B(z_{i},(2 + \tfrac{1}{10})t))} \gtrsim \mu(\bar{D}) \gtrapprox A^{-s/3}.
  \end{equation}
  Then, we define 
  \begin{displaymath}
    B := B(z_{i},(2 + \tfrac{1}{10})t) \setminus B(z_{i},(1 - \tfrac{1}{10})t) ,
  \end{displaymath}
  and note that $K(z) = B \cap K(z)$ for all $z \in W$, because $z' \in K(z) = K_{\epsilon,t}(z)$ (recall \eqref{form109}) already forces the restriction $t \leq |z - z'| \leq 2t$. Hence, for $w \in \R^{2}$,
  \begin{align*}
    m_{C_{1}\epsilon}^{\mu}(w|B \cap K(z)) &= \mu(\{z' \in B \cap K(z) : w \in S^{C_{1}\epsilon}(z)\})\\
    &= \mu(\{z' \in K(z) : w \in S^{C_{1}\epsilon}(z)\}) = m_{C_{1}\epsilon}^{\mu}(w|K(z)), \qquad z \in W,
  \end{align*}
  as claimed by \eqref{form50}. The inequality $\mu(W) \gtrapprox A^{-s/3} \mu(B)$ follows from \eqref{ratio} and the definition of $B$. The bipartite condition holds with constants slightly worse than $t$. 

  Before continuing, we make a small further refinement of $W$. Cover $W$ by $\leq C(t/\epsilon)^{3}$ disjoint (dyadic) cubes $Q_{i}$ of side-length $\epsilon$. At most half of $\mu(W)$ can be contained in the union of those cubes $Q_{i}$ with $\mu(W \cap Q_{i}) \leq (\epsilon/t)^{3}\mu(W)/(2C)$. We refine $W$ by discarding the part of $W$ covered by these low-density cubes. At least half of the $\mu$ measure of $W$ remains, and now all the points $z \in W$ have the following property: they are contained in a cube $Q_{i} = Q_{i}(z)$ of side-length $\epsilon$ such that
  \begin{equation}\label{form69}
    \mu(W \cap Q_{i}) \gtrsim \left(\frac{\epsilon}{t}\right)^{3}\mu(W) \gtrapprox A^{-s/3}\epsilon^{3},
  \end{equation}
  using \eqref{measureOfW}. 

  At this point we observe that $\lambda_{\epsilon}$, as defined in \eqref{form95}, is fairly large. Namely, if $w$ lies in the high-density set defined in \eqref{form66}, for some $z \in W$, then by \eqref{form63} and \eqref{form50},
  \begin{equation}\label{form94}
    A_{\epsilon}^{s}\lambda_{\epsilon}^{-2s}\epsilon^{s} \lessapprox \m_{\epsilon} \leq m_{C_{1}\epsilon}^{\mu}(w|B) \leq \mu(B) \lesssim \mathbf{C} t^{s},
  \end{equation}
 where $A_{\epsilon}$ is the parameter defined in \eqref{form95}. Rearranging this inequality gives
 \begin{displaymath}
    \lambda_{\epsilon} \gtrapprox_{\mathbf{C},s} A_{\epsilon}^{1/2}\sqrt{\frac{\epsilon}{t}}.
  \end{displaymath}
  Recalling that $A_{\epsilon} \geq (A\epsilon)/(\nu \delta)$, then $\nu \leq \epsilon/\delta$, and finally $A \geq C_{\eta,\mathbf{C},s} \cdot \delta^{-\eta}$, we infer that 
  \begin{equation}\label{rhoDef}
    \rho := \floor{c \cdot \tfrac{\lambda_{\epsilon}}{\sqrt{\epsilon/t}}} \gtrapprox_{\mathbf{C},s} A_{\epsilon}^{1/2} \geq \left(C^{1/2}_{\eta,\mathbf{C},s} \right)\delta^{-\eta/2},
  \end{equation}
  where $c > 0$ is a small absolute constant to be specified momentarily. This shows in particular that $\rho \geq 1$, if $C_{\eta,\mathbf{C},s} \geq 1$ is large enough, depending here on $c,\mathbf{C}$ and $s$. 
  Thus, for a fixed point $z \in W$, \eqref{form66} and \eqref{form50} imply that it takes $\gtrsim \rho/c$ (in particular at least $\rho$) sets $I_{i}^{\epsilon}(z)$ to cover the high density set
  \begin{displaymath}
    H^{\epsilon}_{b}(z) := S^{\epsilon}(z) \cap \{w : m^{\mu}_{C_{1}\epsilon}(w|B \cap K(z)) \geq \m_{\epsilon} \text{ and } m^{\mu}_{N\epsilon}(w) \leq M_{\epsilon}\}.
  \end{displaymath}
  For a fixed point $z \in W$, we may hence choose $\rho$ points $v_{1},\ldots,v_{\rho} \in S^{\epsilon}(z)$, which are separated by a distance at least $C\sqrt{\epsilon/t}$ (here $C$ is another absolute constant, which may be chosen larger by making "$c$" smaller), and which satisfy
  \begin{equation}\label{form67}
    m^{\mu}_{C_{1}\epsilon}(v_{j}|K(z) \cap B) \geq \m_{\epsilon} \quad \text{and} \quad m^{\mu}_{N\epsilon}(v_{j}|W) \leq M_{\epsilon}, \qquad 1 \leq j \leq \rho.
  \end{equation}
  Fix $1 \leq j \leq \rho$, and consider the first condition in \eqref{form67}, which is shorthand for
  \begin{equation}\label{form68}
    \mu(B_{j}(z)) := \mu(\{z' \in B \cap K(z) : v_{j} \in S^{\epsilon}(z) \cap S^{C_{1}\epsilon}(z')\}) \geq \m_{\epsilon}.
  \end{equation}
  Whenever $z' \in B_{j}(z)$, then the circles $S(z)$ and $S(z')$ are $2\epsilon$-incident (since $z' \in K(z) = K_{\epsilon,t}(z)$ implies $\Delta'(z - z') \leq 2\epsilon$), and they are \textbf{both} $(N/2)$-tangent to a certain $(\epsilon,t)$-rectangle $R_{j}(z)$ containing $v_{j}$, for $N \geq 1$ large enough (depending on $C_{1}$, an absolute constant chosen at \eqref{form85}). Moreover, when $j \in \{1,\ldots,\rho\}$ varies, the corresponding $(\epsilon,t)$-rectangles are incomparable by the separation of the points $v_{j}$. We summarise the findings above: every $z \in W$ gives rise to $\rho$ incomparable $(\epsilon,t)$-rectangles $R_{j}(z)$, each being $(N/2)$-tangent to $S(z)$, having type $\geq \m_{\epsilon}$ with respect to the set $B$ according to \eqref{form68} (the notion of type was introduced in Definition \ref{def:type}), and containing a point $v_{j} = v_{j}(z)$. 
  
  To make the following discussion more rigorous, choose a maximal (finite) collection $\mathcal{R}$ of incomparable $(\epsilon,t)$-rectangles in $B(0,100)$. Then, by adjusting the constants appropriately, we may assume that each rectangle $R_{j}(z)$, as above, lies in $\mathcal{R}$.

  At this point, we also run one final pigeonholing argument. For $z \in W$ and $v_{j} = v_{j}(z)$ as above, we have the upper bound $m_{N\epsilon}^{\mu}(v_{j}(z)|W) \leq M_{\epsilon}$ by \eqref{form67}. This implies that
  \begin{equation}\label{form105}
    \mu(\{z' \in W : S(z') \text{ is $N$-tangent to } R_{j}(z)\}) \leq M_{\epsilon},
  \end{equation}
  because any circle $S(z')$ being $N$-tangent to $R_{j}(z)$ satisfies $v_{j}(z) \in R_{j}(z) \subset S^{N\epsilon}(z')$ by definition of $N$-tangency. On the other hand, $S(z)$ is $(N/2)$-tangent to $R_{j}(z)$ by the discussion above, and every circle $S(z')$ with $z' \in Q_{i}(z)$ (see above \eqref{form69}) is $N$-tangent to $R_{j}(z)$, hence
  \begin{displaymath}
    \mu(\{z' \in W : S(z') \text{ is $N$-tangent to } R_{j}(z)\}) \geq \mu(W \cap Q_{i}) \stackrel{\eqref{form69}}{\gtrapprox} A^{-s/3}\epsilon^{3} \geq \delta^{4},
  \end{displaymath}
  as we assumed at the start of the proof that $A \leq \delta^{-1}$. Now, for $z \in W$ fixed, we may pick a dyadic number $\delta^{4} \lessapprox \n_{\epsilon}(z) \leq M_{\epsilon}$ such that $\gtrapprox \rho$ rectangles $R_{j}(z)$ satisfy 
  \begin{displaymath}
    \n_{\epsilon}(z) \leq \mu(\{z' \in W : S(z') \text{ is $N$-tangent to } R_{j}(z)\}) \leq 2\n_{\epsilon}(z).
  \end{displaymath}
  Then, we may finally fix $\delta^{4} \lessapprox \n_{\epsilon} \leq M_{\epsilon}$, and a subset $W' \subset W$ with $\mu(W') \gtrapprox \mu(W)$, such that
  \begin{equation}\label{form90}
    \n_{\epsilon} \leq \mu(\{z' \in W : S(z') \text{ is $N$-tangent to } R_{j}(z)\}) \leq 2\n_{\epsilon}
  \end{equation}
  for $z \in W'$, and for $\gtrapprox \rho$ rectangles $R_{j}(z)$. From now on, the rectangles $R_{j}(z)$, $1 \leq j \leq \rho$, satisfying \eqref{form90} will be called the \emph{children} of $z \in W'$. According to \eqref{form90}, every child $R_{j}(z)$ of $z \in W'$ has type $\geq \n_{\epsilon}$ with respect to $W$, assuming that the notion of "tangency" has been defined as $N$-tangency; this is legitimate, since $N$ is an absolute constant.

  Every point $z \in W' \subset W$ gives rise to $\gtrapprox \rho$ children $R_{j}(z)$, as we just argued. Now, as $z \in W'$ varies, how many children in $\calR$ do we find in total, at least? If ten parents have three children each, and each child has at most two parents, then there are at least $3 \cdot 10/2 = 15$ children in total. For a more general statement, see Lemma \ref{fubiniLemma} below. Now, we do the same computation with "parents" replaced by points $z \in W'$ (children are, of course, the rectangles as before). We already know that every parent $z \in W'$ has $\gtrapprox \rho$ children in $\calR$, so we only need to find an upper bound for the number of parents.

  Fix a child $R = R_{j}(z)$, for some $z \in W'$, satisfying \eqref{form90}. If $z' \in W'$ is another parent with the same child $R$, then $S(z')$ is $(N/2)$-tangent to $R$ by definition, and in particular $N$-tangent to $R$. Thus, by \eqref{form90},
  \begin{displaymath}
    \mu(\{z' \in W' : R_{j}(z) \text{ is a child of } z'\}) \leq 2\n_{\epsilon}.
  \end{displaymath}
  Now, Lemma \ref{fubiniLemma} implies (take $(\Omega_{1},\mu_{1}) = (W',\mu)$, $\Omega_{2}$ the set of all possible rectangles $R_{j}(z) \in \calR$ with $z \in W$ and $1 \leq j \leq \rho$, $\mu_{2}$ the counting measure on $\Omega_{2}$, and $E = \{(z',R) \in \Omega_{1} \times \Omega_{2} : R \text{ is the child of } z'\}$) that the total number of rectangles $R \in \calR$, which are the child of some point $z \in W'$, is at least
  \begin{equation}\label{form72}
    \gtrapprox \frac{\mu(W') \rho}{\n_{\epsilon}} \gtrapprox \frac{\mu(W) \rho}{\n_{\epsilon}}.
  \end{equation}
  Moreover, every such child $R$ has type $(\geq \n_{\epsilon},\geq \m_{\epsilon})$ with respect to the $t$-bipartite set $W \cup B$ by \eqref{form68} and \eqref{form90} (as we already mentioned above, we define the concept of \emph{type}, recall Definition \ref{def:type}, using $N$-tangency). On the other hand, by Lemma \ref{wolffMeasures}, given $\tau > 0$, the maximal cardinality of incomparable $(\epsilon,t)$-rectangles of type $(\geq \n_{\epsilon},\geq \m_{\epsilon})$ is bounded from above by
  \begin{align*}
    \lesssim_{\tau} (\m_{\epsilon}\n_{\epsilon}\epsilon)^{-\tau} &\left( \left(\frac{\mu(W)\mu(B)}{\m_{\epsilon}\n_{\epsilon}} \right)^{3/4} + \frac{\mu(W)}{\n_{\epsilon}} + \frac{\mu(B)}{\m_{\epsilon}} \right)\\
    &\stackrel{\eqref{form92}}{\lessapprox} (\m_{\epsilon}\n_{\epsilon}\epsilon)^{-\tau} \left( \left(\frac{A^{s/3}\mu(W)^{2}}{\m_{\epsilon}\n_{\epsilon}} \right)^{3/4} + \frac{\mu(W)}{\n_{\epsilon}} + \frac{A^{s/3}\mu(W)}{\m_{\epsilon}} \right),
  \end{align*}
  recalling from \eqref{form92} that $\mu(B) \lessapprox A^{s/3}\mu(W)$. One can verify from \eqref{form94} that the hypothesis $\epsilon \leq c t$ in Lemma \ref{wolffMeasures} is satisfied if the lower bound $C_{\eta,\mathbf{C},s}$ for the constant $A$, and the constant "$C$" in the inequality $A_{\epsilon} \geq \log^{C}(1/\delta)A$ (see \eqref{form95}), are chosen large enough.

Now fix $0 < \tau < \eta s/50$. Since $s \in (0,1]$, $\delta^{4} \lesssim \n_{\epsilon} \leq M_{\epsilon} \lessapprox \m_{\epsilon}$ by \eqref{form91} and $\rho \gtrapprox A^{1/2} \geq \delta^{-\eta/2}$ by \eqref{rhoDef}, neither of the two latter terms can dominate \eqref{form72}. But the the first term cannot dominate either, since otherwise (importing the lower estimate for $\m_{\epsilon}$ from \eqref{form63}, recalling that $A_{\epsilon} \geq A$, and recalling the definition of $\rho$ from \eqref{rhoDef}),
  \begin{align*}
    \rho &\lessapprox \delta^{-10\tau}A^{s/4}\mu(W)^{1/2} \frac{\n_{\epsilon}^{1/4}}{\m_{\epsilon}^{3/4}}\\
    &\lessapprox \delta^{-10\tau}A^{s/4} t^{s/2} \m_{\epsilon}^{-1/2}\\
    &\lessapprox \delta^{-10\tau} A^{s/4} A_{\epsilon}^{-s/2} \left(\frac{t}{\epsilon}\right)^{s/2} \lambda^{s}_{\epsilon} = \delta^{-10\tau} A^{-s/4} \rho^{s}.
  \end{align*}
  This gives a contradiction, since $s \in (0,1]$, $\rho \geq 1$, and $A^{-s/4} \leq \delta^{\eta s/4}$. The proof of Lemma \ref{incidenceCircles} is complete.
\end{proof}

To finish this section, we verify the lemma used in the previous proof.

\begin{lemma}\label{fubiniLemma}
  Let $(\Omega_{1},\mu_{1}),(\Omega_{2},\mu_{2})$ be finite measure spaces, let $E \subset \Omega_{1} \times \Omega_{2}$ be a subset, and let $\pi_{1} \colon \Omega_{1} \times \Omega_{2} \to \Omega_{1}$ and $\pi_{2} \colon \Omega_{1} \times \Omega_{2} \to \Omega_{2}$ be the coordinate projections. If $E \subset \Omega_{1} \times \Omega_{2}$ is $\mu_{1} \times \mu_{2}$ measurable,
  \begin{displaymath}
    \mu_{2}(\{\omega_{2} \in \Omega_{2} : (\omega_{1},\omega_{2}) \in E\}) \geq C_{2}
  \end{displaymath}
  for all $\omega_{1} \in \pi_{1}(E)$, and
  \begin{displaymath}
    \mu_{1}(\{\omega_{1} \in \Omega_{1} : (\omega_{1},\omega_{2}) \in E\} \leq C_{1}
  \end{displaymath}
  for all $\omega_{2} \in \pi_{2}(E)$, then
  \begin{displaymath}
    \mu_{2}(\pi_{2}(E)) \geq \frac{C_{2}}{C_{1}} \mu_{1}(\pi_{1}(E)).
  \end{displaymath}
\end{lemma}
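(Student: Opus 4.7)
The plan is a direct double-counting via Tonelli's theorem applied to the indicator $\mathbf{1}_{E}$. First I would slice $E$ in both directions: for $\omega_{1} \in \Omega_{1}$ set $E_{\omega_{1}} := \{\omega_{2} \in \Omega_{2} : (\omega_{1},\omega_{2}) \in E\}$, and symmetrically for $\omega_{2} \in \Omega_{2}$ let $E^{\omega_{2}} := \{\omega_{1} \in \Omega_{1} : (\omega_{1},\omega_{2}) \in E\}$. Measurability of these slices is immediate from the $(\mu_{1} \times \mu_{2})$-measurability of $E$, and all measures involved are finite, so Tonelli applies without any integrability subtleties.

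Next I would compute $(\mu_{1} \times \mu_{2})(E)$ in two ways. Integrating first in $\omega_{2}$, the hypothesis $\mu_{2}(E_{\omega_{1}}) \geq C_{2}$ for $\omega_{1} \in \pi_{1}(E)$ (together with $\mu_{2}(E_{\omega_{1}}) = 0$ for $\omega_{1} \notin \pi_{1}(E)$) yields
\begin{displaymath}
  (\mu_{1}\times\mu_{2})(E) = \int_{\pi_{1}(E)} \mu_{2}(E_{\omega_{1}}) \, \dd\mu_{1}(\omega_{1}) \geq C_{2}\,\mu_{1}(\pi_{1}(E)).
\end{displaymath}
Integrating first in $\omega_{1}$, the hypothesis $\mu_{1}(E^{\omega_{2}}) \leq C_{1}$ for $\omega_{2} \in \pi_{2}(E)$ gives
\begin{displaymath}
  (\mu_{1}\times\mu_{2})(E) = \int_{\pi_{2}(E)} \mu_{1}(E^{\omega_{2}}) \, \dd\mu_{2}(\omega_{2}) \leq C_{1}\,\mu_{2}(\pi_{2}(E)).
\end{displaymath}
Combining the two inequalities via the common quantity $(\mu_{1}\times\mu_{2})(E)$ yields $C_{2}\mu_{1}(\pi_{1}(E)) \leq C_{1}\mu_{2}(\pi_{2}(E))$, which rearranges to the claim.

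There is essentially no main obstacle here; the only potential pitfall is measurability of $\pi_{1}(E)$ and $\pi_{2}(E)$ (projections of measurable sets need not be measurable in general), but since the slices $E_{\omega_{1}}$ and $E^{\omega_{2}}$ are measurable and $\{\omega_{1} : \mu_{2}(E_{\omega_{1}}) > 0\} \supset \pi_{1}(E)$ is a measurable superset with the same integral contribution, the Tonelli computation above goes through using this measurable superset in place of $\pi_{1}(E)$ (and similarly on the other side). This is the standard double-counting trick; given the applications in the paper the measure spaces in question are discrete or very tame, so even this subtlety does not arise.
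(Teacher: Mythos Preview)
Your proposal is correct and is essentially the same as the paper's own proof: both compute $(\mu_{1}\times\mu_{2})(E)$ by Fubini/Tonelli in the two orders and sandwich it between $C_{2}\,\mu_{1}(\pi_{1}(E))$ and $C_{1}\,\mu_{2}(\pi_{2}(E))$. Your remark on the measurability of the projections is a nice addition that the paper omits.
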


\begin{proof}
  This is an easy application of Fubini's theorem:
  \begin{align*}
    C_{2} \mu_{1}(\pi_{1}(E)) &\leq \int_{\pi_{1}(E)} \mu_{2}(\{\omega_{2} \in \Omega_{2} : (\omega_{1},\omega_{2}) \in E\}) \dd\mu_{1} \omega_{1}\\
    &= (\mu_{1} \times \mu_{2})(E) = \int_{\pi_{2}(E)} \mu_{1}(\{\omega_{1} \in \Omega_{1} : (\omega_{1},\omega_{2}) \in E\}) \dd\mu_{2} \omega_{2}\\
    &\leq C_{1} \mu_{2}(\pi_{2}(E)),
  \end{align*} 
  which gives the claim by rearranging.
\end{proof}

\section{A measure-theoretic variant of Schlag's lemma for sine waves}

In this section, we prove a variant of Lemma \ref{incidenceCircles} for the sine waves 
\begin{displaymath}
  \Gamma(z) = \{(\theta,\gamma(\theta) \cdot z) : \theta \in J/2\},
\end{displaymath}
where 
\begin{displaymath}
  \gamma(\theta) = \tfrac{1}{\sqrt{2}}(\cos \theta,\sin \theta,1),
\end{displaymath}
and $J \subset [0,2\pi)$ is a short compact interval with $2J \subset [0,2\pi)$. We assume that $J$ is so short that Lemma \ref{Edelta} applies, and so does the discussion in Section \ref{tangencySection}. In accordance with earlier notation, we write
\begin{displaymath}
  \Delta(z) := \Delta_{J}(z) = \min_{\theta \in J} |\pi_{V_{\theta}}(z)|.
\end{displaymath}
Recall that 
\begin{displaymath}
  E_{\delta}(z) = \{\theta \in J/2 : |\rho_{\theta}(z)| \leq \delta\} \quad \text{and} \quad \Gamma^{\delta}(z) = \{(\theta,\theta') \in \tfrac{J}{2} \times \R : |\theta' - \rho_{\theta}(z)| \leq \delta\},
\end{displaymath}
where $\rho_{\theta}(z) = \gamma(\theta) \cdot z$. Given a finite measure $\mu$ on $\R^{3}$ and $\delta > 0$, we re-define the multiplicity function $m_{\delta}^{\mu} \colon \R^{2} \to [0,\mu(\R^{3})]$ in the obvious way:
\begin{displaymath}
  m^{\mu}_{\delta}(w) = \mu(\{z' \in \R^{3} : w \in \Gamma^{\delta}(z')\}).
\end{displaymath}
With this notation, we have the following perfect analogue of Lemma \ref{incidenceCircles} (the only change is literally that $S$ is replaced by $\Gamma$):

\begin{lemma}\label{incidenceSineCurves}
  Fix $s \in (0,1]$, $\delta > 0$, $\eta > 0$, $\mathbf{C} \geq 1$, and $A \geq C_{\eta,\mathbf{C},s} \cdot \delta^{-\eta}$, where $C_{\eta,\mathbf{C},s} \geq 1$ is a large constant depending only on $\eta,\mathbf{C}$, and $s$. Let $\mu$ be a probability measure on $\R^{3}$ satisfying the Frostman condition $\mu(B(z,r)) \leq \mathbf{C} r^{s}$ for all $z \in \R^{3}$ and $r > 0$, and with $K := \spt \, \mu \subset \mathbf{B}_{0}$.  Then, for $\lambda \in (0,1]$, there is a set $G(A,\delta,\lambda) \subset K$ with 
  \begin{displaymath}
    \mu(K \setminus G(A,\delta,\lambda)) \leq A^{-s/3}
  \end{displaymath}
  such that the following holds for all $z \in G(A,\delta,\lambda)$:
  \begin{displaymath}
    |\Gamma^{\delta}(z) \cap \{w : m^{\mu}_{\delta}(w) \geq A^{s}\lambda^{-2s} \delta^{s}\}| \leq \lambda |\Gamma^{\delta}(z)|.
  \end{displaymath}
\end{lemma}

\begin{remark}
  We will assume that the reader is already familiar with the proof of Lemma \ref{incidenceCircles} above; if so, we can promise that \ref{incidenceSineCurves} is easy reading, as the structure of the argument is exactly the same. Even at the risk of repetition, we will still include most details. Apart from a few notational changes, the main difference occurs at the end of the proof. In the previous argument, we were counting tangent circles in two different ways. Below, the natural analogue would be to count tangent sine waves, but we do not have a "sine wave variant" of Wolff's incidence bound, Lemma \ref{wolffMeasures}, at our disposal. So, instead, we use the discussion in Section \ref{tangencySection} to infer that "many tangent sine waves imply many tangent circles", and then we can literally apply Lemma \ref{wolffMeasures} again. Finally, we also need to apply Lemma \ref{incidenceCircles} on the last few meters of the proof: information from the lemma will replace the appeal to the "induction hypothesis" within Lemma \ref{incidenceCircles} (for somewhat complicated technical reasons, the corresponding induction hypothesis appears to be too weak to settle the proof in the setting below).
\end{remark}

\begin{proof}[Proof of Lemma \ref{incidenceSineCurves}] Just like in the proof of Lemma \ref{incidenceCircles}, we may assume that $\delta > 0$ is small in a manner depending on $\eta,\mathbf{C},s$, in particular $\delta \in (0,\tfrac{1}{2}]$, and $A \leq \delta^{-1}$.

  Assume to the contrary that there exists a dyadic number $\delta \in 2^{-\N}$, and a number 
  \begin{equation}\label{form51-sine}
    m \geq A^{s}\lambda^{-2s}\delta^{s}
  \end{equation}
  such that 
  \begin{equation*}
    |\Gamma^{\delta}(z) \cap \{w : m_{\delta}^{\mu}(w) \geq m\}| \geq \lambda |\Gamma^{\delta}(z)|
  \end{equation*}
  for every $z \in D \subset K$, where
  \begin{equation*}
    \mu(D) > A^{-s/3}.
  \end{equation*}
  This will result in a contradiction provided that the constant $C_{\eta,\mathbf{C},s}$ in the assumption $A \geq C_{\eta,\mathbf{C},s} \cdot \delta^{-\eta}$ is sufficiently large. For $z \in \R^{3}$ and dyadic numbers $\epsilon,t \in (0,1]$, define
  \begin{equation*}
    K_{\epsilon,t}(z) := \{z' \in K \colon \Gamma^{\delta}(z) \cap \Gamma^{\delta}(z') \neq \emptyset, \: t \leq |z - z'| < 2t \text{ and }  \epsilon \leq \Delta(z - z') \leq 2\epsilon\}.
  \end{equation*}
  In the case $\epsilon = \delta$, we again drop the lower constraint from $\Delta(z - z')$ (as in the proof of Lemma \ref{incidenceCircles}). Define also the restricted multiplicity function
  \begin{displaymath}
    m^{\mu}_{\delta}(w | K_{\epsilon,t}(z)) := \mu(\{z' \in K_{\epsilon,t}(z) : w \in \Gamma^{\delta}(z')\}).
  \end{displaymath}
  Proceeding as in the proof of Lemma \ref{incidenceCircles}, if the constant $C_{\eta,\mathbf{C},s} \geq 1$ is taken large enough, we may pigeonhole fixed dyadic numbers $\epsilon \in [\delta,1]$ and $t \in [2C\delta,1]$ (with $C = C(\gamma,J) \geq 1$ now explicitly being the constant from Lemma \ref{Edelta}), $m \lessapprox \bar{m} \leq m$ and $\lambda \lessapprox \bar{\lambda} \leq \lambda$, and a subset $\bar{D} \subset D$ with $\mu(\bar{D}) \gtrapprox \mu(D)$, such that the following holds for all $z \in \bar{D}$:
  \begin{equation}\label{barD}
    |H^{\delta}(z)| := |\Gamma^{\delta}(z) \cap \{w :  m^{\mu}_{\delta}(w | K_{\epsilon,t}(z)) \geq \bar{m}\}| \geq \bar{\lambda}|\Gamma^{\delta}(z)|.
  \end{equation}
  For the rest of the proof, the numbers $t$ and $\epsilon$ will be fixed, and we write $K_{\epsilon,t}(z) =: K(z)$.

  For a heuristic explanation of what happens next, see the corresponding spot in the proof of Lemma \ref{incidenceCircles}. Fix $z \in \bar{D}$, so that \eqref{barD} holds. We claim that there exists a dyadic number $\nu = \nu(z) \in \{1,\ldots,\epsilon/\delta\}$, and an absolute constant $C_{1} \geq 1$, such that
  \begin{equation}\label{form62a}
    |\Gamma^{\epsilon}(z) \cap \{w : m^{\mu}_{C_{1}\epsilon}(w|K(z)) \gtrsim \nu\bar{m}\}| \gtrapprox \frac{\bar{\lambda}\epsilon}{\nu \delta} |\Gamma^{\epsilon}(z)|.
  \end{equation}
  To see this, we recall from Lemma \ref{Edelta} that if $z,z' \in \R^{3}$ and with $|z - z'| \geq t \geq C\delta$ and $\Delta(z - z') \sim \epsilon$, then $\Gamma^{\delta}(z) \cap \Gamma^{\delta}(z')$ can be covered by two vertical tubes of width $\leq C\delta/\sqrt{\epsilon t}$ (this remains true if $\epsilon = \delta$, and merely $\Delta(z - z') \leq 2\epsilon$). Motivated by this, we first divide $J/2$ into \emph{short intervals} $J_{1},\ldots,J_{N}$ of length $C\delta/\sqrt{\epsilon t}$. Consider the corresponding \emph{thin tubes} $T_{j} = J_{j} \times \R$. Since $|T_{j} \cap \Gamma^{\delta}(z)| \leq \delta^{2}/\sqrt{\epsilon t}$, we may, by \eqref{barD}, find at least a constant times
  \begin{displaymath}
    \frac{\bar{\lambda}|\Gamma^{\delta}(z)|}{\delta^{2}/\sqrt{\epsilon t}} \sim \frac{\bar{\lambda}\sqrt{\epsilon t}}{\delta}
  \end{displaymath}
  indices $j$ such that $H^{\delta}(z) \cap T_{j} \neq \emptyset$. Denote these indices by $\calJ(z)$, and for each $j \in \calJ(z)$, pick a point $w_{j} \in H^{\delta}(z) \cap T_{j}$. Thus $m^{\mu}_{\delta}(w_{j}|K(z)) \geq \bar{m}$ for $j \in \calJ$. Throw away at most half of the indices to ensure that $|w_{i} - w_{j}| \geq C\delta/\sqrt{\epsilon t}$ for $i,j \in \calJ(z)$ with $i \neq j$. Then, the sets
  \begin{displaymath}
    K_{j}(z) := \{z' \in K(z) : w_{j} \in \Gamma^{\delta}(z) \cap \Gamma^{\delta}(z')\}
  \end{displaymath}
  with $m_{\delta}^{\mu}(w_{j}|K(z)) = \mu(K_{j}(z))$ have bounded overlap:
  \begin{equation}\label{form97}
    \sum_{j \in \calJ(z)} \chi_{K_{j}(z)}(z') \leq 2, \qquad z' \in K(z).
  \end{equation}
  Indeed, if $z' \in K_{j}(z)$, then $w_{j} \in \Gamma^{\delta}(z) \cap \Gamma^{\delta}(z')$, which implies that $w_{j}$ has to lie in one of the at most two vertical tubes of width at most $\delta/\sqrt{\epsilon t}$ covering the intersection $\Gamma^{\delta}(z) \cap \Gamma^{\delta}(z')$. By the separation of the points $w_{j}$, this can happen for at most two values of $j$.

  Next, we group the points $w_{j}$ inside somewhat thicker vertical tubes. To this end, divide $J/2$ into \emph{long intervals} $I_{1},\ldots,I_{M}$ of length $C\sqrt{\epsilon/t}$. By adjusting the lengths appropriately, we may assume that the long intervals $I_{i}$ are sub-divided further into
  \begin{displaymath}
    \frac{|I_{i}|}{|J_{j}|} \leq \frac{\sqrt{\epsilon/t}}{\delta/\sqrt{\epsilon t}} = \frac{\epsilon}{\delta}
  \end{displaymath}
  short intervals $J_{j}$. For each interval $I_{i}$, write
  \begin{displaymath}
    k(i) := \card \{j \in \calJ : w_{j} \in \mathbf{T}_{i} \cap \Gamma^{\delta}(z)\},
  \end{displaymath} 
  where $\mathbf{T}_{i}$ is the \emph{thick tube} $\mathbf{T}_{i} := I_{i} \times \R$. Since $0 \leq k(i) \leq \epsilon/\delta$, there is a dyadic number $\nu = \nu(z) \in \{1,\ldots,\epsilon/\delta\}$ such that $\gtrapprox |\calJ| \gtrsim \bar{\lambda}\sqrt{\epsilon t}/\delta$ points $w_{j}$ are contained in the union of the thick tubes $\mathbf{T}_{i}$ with $\nu \leq k(i) \leq 2\nu$. Denote the indices of these thick tubes by $\calI(z)$. Thus, if $i \in \calI(z)$, then $\mathbf{T}_{i} \cap \Gamma^{\delta}(z)$ contains at least $\nu$ points $w_{j}$, and
  \begin{equation}\label{form98}
    |\calI(z)| \gtrapprox \frac{|\calJ|}{\nu} \gtrsim \frac{\bar{\lambda} \sqrt{\epsilon t}}{\nu \delta}.
  \end{equation}

  Fix $i \in \calI(z)$ and $w_{j} \in \mathbf{T}_{i} \cap \Gamma^{\delta}(z)$. We claim that whenever $z' \in K_{j}(z)$, then 
  \begin{equation}\label{form85a}
    \mathbf{T}_{i} \cap \Gamma^{\epsilon}(z) \subset \Gamma^{C_{1}\epsilon}(z'),
  \end{equation}
  for some $C_{1} \geq 1$ large enough. To see this, note that by definition of $z' \in K_{j}(z)$, we have 
  \begin{displaymath}
    (w_{j}^{1},w_{j}^{2}) := w_{j} \in \mathbf{T}_{i} \cap \Gamma^{\delta}(z) \cap \Gamma^{\delta}(z').
  \end{displaymath}
  Thus
  \begin{displaymath}
    |\rho_{w_{j}^{1}}(z) - \rho_{w_{j}^{1}}(z')| \leq 2\delta,
  \end{displaymath}
  or, in other words, $w_{j}^{1} \in E_{2\delta}(z - z')$. Since $\Delta(z - z') = \Delta_{J}(z - z') \leq \epsilon$ and $|z - z'| \geq t \geq C(2\delta)$, Lemma \ref{Edelta} says that $w_{j}^{1}$ is at distance at most a constant times $\sqrt{\epsilon/t}$ from a certain point $\theta_{0} \in 2J$ with the properties that 
  \begin{equation}\label{form96}
    \dot{\gamma}(\theta_{0}) \cdot (z - z') = 0 \quad \text{and} \quad |\gamma(\theta_{0}) \cdot (z - z')| \lesssim \epsilon.
  \end{equation}
  Now, we can prove \eqref{form85a}: fix a point $w = (w^{1},w^{2}) \in \mathbf{T}_{i} \cap \Gamma^{\epsilon}(z)$, and note that $|w^{1} - \theta_{0}| \leq |w^{1} - w_{j}^{1}| + |w^{1}_{j} - \theta_{0}| \lesssim \sqrt{\epsilon/t}$, and $|\rho_{w^{1}}(z) - w^{2}| \leq \epsilon$ by definition of $w \in \mathbf{T}_{i} \cap \Gamma^{\epsilon}(z)$. It follows, using \eqref{form96}, that
  \begin{align*}
    |\rho_{w^{1}}(z') - w^{2}| &\leq |\gamma(w^{1}) \cdot (z' - z)| + |\rho_{w^{1}}(z) - w^{2}|\\
    &\leq \int_{\theta_{0}}^{w^{1}} |\dot{\gamma}(s) \cdot (z' - z)| \dd s + |\gamma(\theta_{0}) \cdot (z' - z)| + \epsilon\\
    &\lesssim \int_{\theta_{0}}^{w^{1}} \int_{\theta_{0}}^{s} |\ddot{\gamma}(r) \cdot (z' - z)| \dd r \dd s + \epsilon\\
    &\lesssim |z' - z| |w^{1} - \theta_{0}|^{2} + \epsilon \lesssim \epsilon.
  \end{align*} 
  This is another way of writing $w \in \Gamma^{C_{1}\epsilon}(z')$, so the proof of \eqref{form85a} is complete.

  Now, for $i \in \calI(z)$ and $w \in \mathbf{T}_{i} \cap \Gamma^{\epsilon}(z)$ fixed, we can use the bounded overlap of the sets $K_{j}(z)$ (recall \eqref{form97}) and \eqref{form85a} to obtain
  \begin{displaymath}
    m_{C_{1}\epsilon}^{\mu}(w|K(z)) = \int_{K(z)} \chi_{\Gamma^{C_{1}\epsilon}(z')}(w) \dd\mu z' \gtrsim \sum_{w_{j} \in \mathbf{T}_{i} \cap \Gamma^{\delta}(z)} \mu(K_{j}(z)) \geq \nu \bar{m}.
  \end{displaymath}
  (See the corresponding spot in the proof of Lemma \ref{incidenceCircles}, namely the calculations above \eqref{form108}, for more details.) Thus, we have proven that whenever $i \in \calI(z)$, then $\mathbf{T}_{i} \cap \Gamma^{\epsilon}(z) \subset \Gamma^{\epsilon}(z) \cap \{w : m_{C_{1}\epsilon}^{\mu}(w|K(z)) \gtrsim \nu \bar{m}\}$. Recalling \eqref{form98}, this proves that
  \begin{displaymath}
    |\Gamma^{\epsilon}(z) \cap \{w : m_{C_{1}\epsilon}^{\mu}(w|K(z)) \gtrsim \nu \bar{m}\}| \geq |\calI(z)| |\mathbf{T}_{i} \cap \Gamma^{\epsilon}(z)| \gtrapprox \frac{\bar{\lambda}\sqrt{\epsilon t}}{\nu \delta} \sqrt{\epsilon/t} \epsilon \sim \frac{\bar{\lambda}\epsilon}{\nu \delta} |\Gamma^{\epsilon}(z)|,
  \end{displaymath}
  which is precisely \eqref{form62a}. 

  Recall that the dyadic number $\nu = \nu(z)$ still depends on the point $z \in \bar{D}$. We pass to a subset of measure $\gtrapprox \mu(\bar{D}) \gtrapprox A^{-s/3}$ to make the choice uniform. With this reduction, we may assume that \eqref{form62a} holds for all $z \in \bar{D}$, for some fixed dyadic number $\nu \in \{1,\ldots,\epsilon/\delta\}$. 

  We now re-write \eqref{form62a} slightly, by denoting
  \begin{equation}\label{form110}
    \lambda_{\epsilon} := \log^{-C}(1/\delta) \frac{\bar{\lambda}\epsilon}{\nu \delta} \quad \text{and} \quad A_{\epsilon} := \log^{C}(1/\delta) \left(\frac{A \epsilon}{\nu \delta} \right),
  \end{equation}
  where $C \geq 1$ is a suitable constant. Recall from \eqref{form51-sine} that $\bar{m} \approx m \geq A^{s}\lambda^{-2s}\delta^{s} \approx A^{s}\bar{\lambda}^{-2s}\delta^{s}$. Since also $s \in (0,1]$, and $\nu \geq 1$, we have 
  \begin{equation*}
    \m_{\epsilon} := \frac{\nu \bar{m}}{C} \gtrapprox \nu A^{s} \bar{\lambda}^{-2s}\delta^{s} \gtrapprox \left(\frac{A \epsilon}{\nu \delta} \right)^{s} \lambda_{\epsilon}^{-2s} \epsilon^{s} \approx A_{\epsilon}^{s}\lambda^{-2s}_{\epsilon}\epsilon^{s}.
  \end{equation*}
  Thus, if $C \geq 1$ is large enough, \eqref{form62a} implies that
  \begin{equation}\label{form64a}
    |H^{\epsilon}(z)| := |\Gamma^{\epsilon}(z) \cap \{w : m^{\mu}_{C_{1}\epsilon}(w|K(z)) \geq \m_{\epsilon}\}| \geq \frac{10\lambda_{\epsilon}}{|J|} |\Gamma^{\epsilon}(z)|, \qquad z \in \bar{D}.
  \end{equation}
  We will denote the first coordinates of $H^{\epsilon}(z)$ by $H_{1}^{\epsilon}(z) := \{w^{1} \in \tfrac{J}{2} : (w^{1},w^{2}) \in H^{\epsilon}(z)\}$. We record that \eqref{form64a} implies
  \begin{equation}\label{form99}
    |H_{1}^{\epsilon}(z)| \geq 2\lambda_{\epsilon}.
  \end{equation}
  Otherwise, by Fubini, $|H^{\epsilon}(z)| \leq 2\lambda_{\epsilon} \cdot 2\epsilon < (10\lambda_{\epsilon}/|J|) |\Gamma^{\epsilon}(z)|$.
  
   Next, we extract two sets
  \begin{displaymath}
    W \subset \bar{D} \quad \text{and} \quad B \subset \R^{3}
  \end{displaymath}
  satisfying the two $t$-bipartite conditions $\max\{\diam(W),\diam(B)\} \leq t \leq \dist(W,B)$ and $\diam(W \cup B) \leq 100t$. We will moreover find $W$ and $B$ so that 
  \begin{equation*}
    \mu(W) \gtrapprox A^{-s/3} \cdot \mu(B),
  \end{equation*}
  and
  \begin{equation*}
    m^{\mu}_{C_{1}\epsilon}(w |K(z)) = m^{\mu}_{C_{1}\epsilon}(w |K(z) \cap B), \qquad z \in W, \: w \in \R^{2},
  \end{equation*}
  and every $z \in W$ is contained in a dyadic cube $Q(z)$ of side-length $\epsilon$ and mass
  \begin{equation*}
    \mu(W \cap Q(z)) \gtrapprox A^{-s/3}\epsilon^{3}.
  \end{equation*}
  The sets $W,B$ are found by verbatim the same argument as in the proof of Lemma \ref{incidenceCircles}, so we omit the details. 

  At this point, the proof deviates from its analogue for circles. We apply the variant of the current lemma for circles -- namely Lemma \ref{incidenceCircles} -- to the collection of circles $S(z) = S(x,r)$ with $z \in K$.\footnote{This is difficult to explain heuristically at the moment, but we make the following attempt. The plan is eventually pass from "sine waves with high multiplicity" to "circles with plenty of tangencies", using Lemma \ref{sineCurvesToCircles}. But we will also need to know that there are not \textbf{too} many tangencies between the circles. It seems that having (upper) multiplicity control for the sine waves is a bit too weak to get that, and so we, instead, secure multiplicity control for the circles directly. Such control is provided by Lemma \ref{incidenceCircles}.} For this purpose, we define the circular multiplicity function
  \begin{displaymath}
    m^{\mu,S}_{\epsilon}(w) := \mu(\{z' : w \in S^{\epsilon}(z')\}).
  \end{displaymath}
  (Recall that $K \subset \mathbf{B}_{0}$ lies in the upper half-space, so every point $z = (x,r)$ with $z \in \spt \mu$ corresponds to an honest circle $S(x,r)$.) Then we apply Lemma \ref{incidenceCircles} at scale $C_{2}\epsilon$ for a suitable $C_{2} \geq C_{1} \geq 1$ (to be determined later), and with the constants 
  \begin{displaymath}
    A_{\epsilon} \geq A \geq C_{\eta,\mathbf{C},s}\epsilon^{-\eta} \geq C_{\eta,\mathbf{C},s}(C_{2}\epsilon)^{-\eta}
  \end{displaymath}
  and $\lambda_{\epsilon}/(CC_{2}) > 0$ (here $C \geq 1$ is a less relevant constant, just large enough so that \eqref{form65a} below holds). The conclusion is that there exists a set $G = G(C_{2}\epsilon,\lambda_{\epsilon}/(CC_{2})) \subset K$ with $\mu(K \setminus G) \leq A_{\epsilon}^{-s/3}$ such that
  \begin{equation}\label{form65a}
    |S^{C_{2}\epsilon}(z) \cap \{w : m^{\mu,S}_{C_{2}\epsilon}(w) \geq A_{\epsilon}^s[\lambda_{\epsilon}/(CC_{2})]^{-2s}(C_2\epsilon)^s\}| \leq \frac{\lambda_{\epsilon}}{CC_{2}}|S^{C_{2}\epsilon}(z)| \leq \frac{\lambda_{\epsilon}}{10}|S^{\epsilon}(z)|
  \end{equation}
  for $z \in G$. In particular, since $A_{\epsilon} \geq \log^{C}(1/\delta) A$, and $\mu(\bar{D}) \gtrapprox A^{-s/3}$, the estimate \eqref{form65a} holds for at least half of the points $z \in \bar{D}$ (assuming that $C$ was chosen large enough). We restrict attention to this half, so that \eqref{form99} and \eqref{form65a} hold simultaneously for all $z \in \bar{D}$. 

  What we want to infer from \eqref{form65a} is the following: Fix $z = (x,r) \in \bar{D}$ and a point $w = (w^{1},w^{2}) \in H^{\epsilon}(z)$, so that $w^{1} \in H_{1}^{\epsilon}(z)$. Then, consider the ray $\ell_{x,w^{1}}$ emanating from $x$ and passing through $x + r(\cos w_{1},\sin w_{1})$. Assume that the intersection $S^{\epsilon}(z) \cap \ell_{x,w^{1}}$ is contained in the set on the left hand side of \eqref{form65a}. Now, if this happened for all $w^{1} \in H_{1}^{\epsilon}(z)$, then the set on the left hand side of \eqref{form65a} would evidently have measure at least $|H_{1}^{\epsilon}(z)| \epsilon > (\lambda_{\epsilon}/10)|S^{\epsilon}(z)|$, which is ruled out by \eqref{form65a}. In fact, by the same argument, there exists a subset $\tilde{H}^{\epsilon}_{1}(z) \subset H^{\epsilon}_{1}(z)$ of length 
  \begin{equation}\label{form100}
    |\tilde{H}^{\epsilon}_{1}(z)| \geq \lambda_{\epsilon}
  \end{equation}
  such that the following two things hold:
  \begin{itemize}
    \item[(a)] For every $w^{1} \in \tilde{H}_{1}^{\epsilon}(z)$, there exists $w^{2} \in \R$ such that $w = (w^{1},w^{2}) \in H^{\epsilon}(z)$.
    \item[(b)] For every $w^{1} \in \tilde{H}_{1}^{\epsilon}(z)$, the intersection $S^{\epsilon}(z) \cap \ell_{x,w^{1}}$ contains a point $v = v(w^{1},z)$ with $m^{\mu,S}_{C_{2}\epsilon}(v) \leq A_{\epsilon}^s[\lambda_{\epsilon}/(CC_{2})]^{-2s}(C_2\epsilon)^s =: M_{\epsilon}$.
  \end{itemize}

  Since the definitions of $\lambda_{\epsilon}$ and $A_{\epsilon}$ (as in \eqref{form110}) are the same as in the proof of Lemma \ref{incidenceCircles}, we may repeat the computations from around \eqref{rhoDef} to conclude that $\epsilon$ is significantly smaller than $t$, and
  \begin{equation*}
    \rho := \floor{c \cdot \frac{\lambda_{\epsilon}}{\sqrt{\epsilon/t}}} \gtrapprox_{\mathbf{C},s} (C_{\eta,\mathbf{C},s}^{1/2}) \cdot \delta^{-\eta/2}.
  \end{equation*}
  In particular, $\rho \geq 1$. Thus, for $z \in W$ fixed, it takes, by \eqref{form100}, at least $\rho$ long intervals $I_{1},\ldots,I_{\rho}$ (of length $C\sqrt{\epsilon/t}$ as before) to cover the set $\tilde{H}_{1}^{\epsilon}(z)$. We may in particular choose $\rho$ points $w_{1}^{1},\ldots,w_{\rho}^{1} \in \tilde{H}^{1}_{\epsilon}(z)$, which are separated by at least $C\sqrt{\epsilon/t}$, and which by (a) from the definition of $\tilde{H}^{1}_{\epsilon}(z)$ satisfy
  \begin{equation}\label{form67a}
    m^{\mu}_{C_{1}\epsilon}((w^{1}_{j},w^{2}_{j})|K(z) \cap B) \geq \m_{\epsilon}, \qquad 1 \leq j \leq \rho,
  \end{equation}
  for certain choices of $w^{2}_{j} \in \R$ such that $w_{j} := (w^{1}_{j},w^{2}_{j}) \in \Gamma^{\epsilon}(z)$. Unwrapping the definition, we re-write \eqref{form67a} as
  \begin{equation*}
    \mu(\{z' \in B \cap K(z) : w_{j} \in \Gamma^{C_{1}\epsilon}(z) \cap \Gamma^{C_{1}\epsilon}(z')\}) \geq \m_{\epsilon}.
  \end{equation*}
  Now, fix $z \in W$ and $z' \in B \cap K(z)$ with $w_{j} \in \Gamma^{C_{1}\epsilon}(z) \cap \Gamma^{C_{1}\epsilon}(z')$. If we write $z = (x,r)$ and $z' = (x',r')$, then by Lemma \ref{sineCurvesToCircles}, the circles $S(x,r)$ and $S(x',r')$ are both $C$-tangent to an $(\epsilon,t)$-rectangle $R_{j}(z)$ with
  \begin{equation}\label{form74}
    R_{j}(z) \subset S^{C\epsilon}(x,r) \cap B(x + r(\cos w_{j}^{1},\sin w_{j}^{1}),C_{2}\sqrt{\epsilon/t}),
  \end{equation}
  where $C \geq 1$ is a constant depending only on $C_{1}$ (which was an absolute constant). As $z \in W$ is fixed when $j \in \{1,\ldots,\rho\}$ varies, the rectangles $R_{j}(z)$ are incomparable by \eqref{form74}, and the separation of the points $w_{j}^{1}$. So, every $z \in W$ gives rise to $\rho$ incomparable $(\epsilon,t)$-rectangles, all of which are $C$-tangent to $S(z)$, and have type $\geq \m_{\epsilon}$ with respect to the set $B$. This is nearly a perfect analogue of the conclusion we drew after \eqref{form68} in the proof of Lemma \ref{incidenceCircles}, but one crucial feature is missing: the rectangles $R_{j}(z)$ do not (yet) contain suitable analogues of the points $v_{j}(z)$, for which there is also an \textbf{upper} bound for multiplicity, compare with \eqref{form67}. To remedy this, we need (b) from the definition of $\tilde{H}_{1}^{\epsilon}$: namely, for $z = (x,r) \in W$ fixed and $1 \leq j \leq \rho$, we may pick $v_{j}(z) \in S^{\epsilon}(z) \cap \ell_{x,w_{j}^{1}}$ satisfying
  \begin{equation}\label{form104}
    m_{C_{2}\epsilon}^{\mu,S}(v_{j}(z)) \leq M_{\epsilon}.
  \end{equation}
  Note that $v_{j}(z)$ lies close to $R_{j}(z)$ by \eqref{form74}, and the definition of $\ell_{x,w_{j}^{1}}$. In fact, if $S(x',r')$ is any circle tangent to the $(\epsilon,t)$-rectangle $R_{j}(z)$, then $S(x',r')$ is tangent (with slightly different constants) to any rectangle comparable to $R_{j}(z)$, and in particular to an $(\epsilon,t)$-rectangle $R'$ with $v_{j}(z) \in R \subset S^{\epsilon}(z)$. If $C_{2} \geq 1$ was chosen large enough, then this implies that $v_{j}(z) \in S^{C_{2}\epsilon}(x',r')$. Combined with \eqref{form104}, this shows that
  \begin{displaymath}
    \mu(\{z' : S(z') \text{ is tangent to } R_{j}(z)\}) \leq \mu(\{z' : v_{j}(z) \in S^{C_{2}\epsilon}(z')\}) = m_{C_{2}\epsilon}^{\mu,S}(v_{j}(z)) \leq M_{\epsilon}, \quad z \in W.
  \end{displaymath}
  This is an exact analogue of \eqref{form105}. 

  After this, the proof runs exactly in the same manner as that of Lemma \ref{incidenceCircles}. First, one finds by pigeonholing a number $n_{\epsilon}$ with $\delta^{4} \lessapprox \n_{\epsilon} \leq M_{\epsilon}$ such that 
  \begin{equation*}
    \n_{\epsilon} \leq \mu(\{z' \in W : S(z') \text{ is tangent to } R_{j}(z)\}) \leq 2\n_{\epsilon}
  \end{equation*}
  for all $z \in W'$ with $\mu(W') \gtrapprox \mu(W)$, and for $\gtrapprox \rho$ values of $j$. This is the analogue of \eqref{form90}, and the proof is the same. These rectangles $R_{j}(z)$ are then again called the \emph{children} of $z \in W'$, and one observes that they have type $(\geq \n_{\epsilon},\geq \m_{\epsilon})$ with respect to the $t$-bipartite set $W \cup B$. The same arguments as in the proof of Lemma \ref{incidenceCircles} now give upper and lower bounds for the family of all rectangles $R_{j}(z)$, arising from $z \in W'$ and $1 \leq j \leq \rho$; comparing these bounds against each other produces a contradiction as before, and completes the proof of Lemma \ref{incidenceSineCurves}.
\end{proof}

\section{Proof of the main result}

We are now ready to prove the main result, Theorem \ref{main2}, which we recall here.

\begin{thm}\label{mainExc}
  Let $K \subset \R^{3}$ be an analytic set with $0 < \Hd K \leq 1$, and let $0 \leq t < \Hd K$. Then $\Hd \rho_{\theta}(K) \geq t$ for all $\theta \in [0,2\pi) \setminus E$, where
  \begin{displaymath}
    \Hd E \leq \frac{\Hd K + t}{2\Hd K} < 1.
  \end{displaymath}
\end{thm}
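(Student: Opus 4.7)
The approach is potential-theoretic. After scaling, translation, and countable decomposition, we may assume $K \subset B_{0}$; and after partitioning $[0,2\pi)$ into finitely many short intervals we may restrict attention to a single compact subinterval $J$ on which the lemmas of the previous sections apply. Fix $s \in (t, \Hd K)$ close enough to $\Hd K$ that $(s+t)/(2s)$ is close to $(\Hd K + t)/(2\Hd K)$, and take a Frostman probability measure $\mu$ on $K \cap B_{0}$ with $\mu(B(z,r)) \lesssim r^{s}$. Argue by contradiction: assume $\Hd(E \cap J) > u$ for some $u > (s+t)/(2s)$, and pick a Frostman probability measure $\nu$ on $E \cap J$ with $\nu(B(\theta, r)) \lesssim r^{u}$.

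The plan is to derive a contradiction by showing that the $\nu$-averaged Riesz $t$-energy of the projected measures,
\begin{equation*}
  \mathcal{E} := \int_{J} I_{t}(\rho_{\theta,*}\mu)\dd\nu(\theta) = \int\int \biggl( \int_{J}|\rho_{\theta}(z_{1}-z_{2})|^{-t}\dd\nu(\theta)\biggr)\dd\mu(z_{1})\dd\mu(z_{2}),
\end{equation*}
is finite; since $\Hd \rho_{\theta}(K) < t$ for every $\theta \in \spt\nu = E \cap J$ forces $I_{t}(\rho_{\theta,*}\mu) = +\infty$ pointwise, this is incompatible with $\nu$ being a probability measure. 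To bound the inner integral pointwise in $w = z_{1}-z_{2}$, expand by layer cake as $t\int_{0}^{\infty}\delta^{-t-1}\nu(E_{\delta}(w))\dd\delta$ and combine Lemma \ref{Edelta} with the $u$-Frostman property of $\nu$ to get $\nu(E_{\delta}(w)) \lesssim (\delta/\sqrt{(\Delta(w) + \delta)|w|})^{u}$. Splitting the $\delta$-integral at $\delta = \Delta(w)$, and noting that $u > (s+t)/(2s)$ together with $t < \Hd K \leq 1$ forces $u > t$ (and focusing on the harder regime $u/2 < t$; the complementary case is handled by integrating $|w|^{-u/2}$ directly against $\mu \times \mu$ using the Frostman condition on $\mu$), one obtains the pointwise estimate
\begin{equation*}
  \int_{J}|\rho_{\theta}(w)|^{-t}\dd\nu(\theta) \lesssim |w|^{-u/2}\Delta(w)^{u/2 - t}.
\end{equation*}

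The proof is thus reduced to the main estimate
\begin{equation*}
  \mathcal{J} := \int\int |z_{1}-z_{2}|^{-u/2}\Delta(z_{1}-z_{2})^{u/2-t}\dd\mu(z_{1})\dd\mu(z_{2}) < \infty,
\end{equation*}
and it is here that Lemma \ref{incidenceSineCurves} enters. Decomposing dyadically over scales $|z_{1}-z_{2}| \sim 2^{-j}$ and $\Delta(z_{1}-z_{2}) \sim 2^{-k}$ (with $0 \leq j \leq k$, since $\Delta \leq |w|$), finiteness of $\mathcal{J}$ reduces to bounding the $\mu \times \mu$-mass of the tangency shells $F_{j,k} = \{(z_{1},z_{2}) : |z_{1}-z_{2}| \sim 2^{-j},\ \Delta(z_{1}-z_{2}) \sim 2^{-k}\}$. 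Such a bound is obtained by applying Lemma \ref{incidenceSineCurves} at scale $\delta \sim 2^{-k}$ with $\lambda$ and $A$ calibrated so that the multiplicity threshold $A^{s}\lambda^{-2s}\delta^{s}$ matches the Frostman scale of $\mu$, and then extracting a pair-count bound from the multiplicity bound by a Chebyshev/Fubini-type argument (in the same spirit as the application of Lemma \ref{incidenceCircles} in the proof of Theorem \ref{circleUnion}). Summing geometrically in $j$ and $k$, the series converges precisely when $u > (s+t)/(2s)$, closing the contradiction. The main obstacle will be this final step: producing a sharp shell-mass bound on $F_{j,k}$, since Lemma \ref{incidenceSineCurves} controls multiplicities on typical sine waves rather than pair-counts of near-tangent pairs directly, and the calibration of $\lambda, A, \delta$ must be tight enough that the summability threshold comes out exactly at $u = (s+t)/(2s)$.
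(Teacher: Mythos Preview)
Your approach is genuinely different from the paper's, and the first half of it is clean: the reduction to $K\subset B_{0}$ and a short interval $J$, the energy set-up, and the pointwise estimate
\[
\int_{J}|\rho_{\theta}(w)|^{-t}\dd\nu(\theta)\;\lesssim\;|w|^{-u/2}\,\Delta(w)^{u/2-t}
\]
via Lemma~\ref{Edelta} and the $u$-Frostman bound on $\nu$ are all correct. The gap is exactly where you flag it, and it is not just a technical obstacle---it is fatal in the form you have written. Once you pass to
\[
\mathcal{J}=\int\!\!\int |z_{1}-z_{2}|^{-u/2}\,\Delta(z_{1}-z_{2})^{u/2-t}\,\dd\mu(z_{1})\dd\mu(z_{2}),
\]
the quantity depends only on $\mu$, $u$, and $t$; all information about $\nu$ (in particular, that $\spt\nu\subset E$) has been discarded. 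But $\mathcal{J}$ need not be finite: if $\mu$ is supported on a translate of one of the lines $\ell_{\theta_{0}}\subset\mathcal{C}$ (which can be arranged inside $B_{0}$), then $\Delta(z_{1}-z_{2})=0$ for every pair, so $\mathcal{J}=\infty$ whenever $u/2<t$. Since Lemma~\ref{incidenceSineCurves} is a statement about $\mu$ alone, it cannot possibly yield a bound $\mathcal{J}<\infty$ valid for all $s$-Frostman $\mu$. Your reference to ``the application of Lemma~\ref{incidenceCircles} in the proof of Theorem~\ref{circleUnion}'' is also off: that proof does not extract a pair-count bound from the multiplicity lemma---it runs the same covering argument as Theorem~\ref{mainint}.

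The paper avoids this by never passing to a $\nu$-free quantity. Instead it argues by coverings: for each $L\in\calL_{t}$ one has an efficient cover $\calI_{L}$ of $L\cap\bigcup_{z\in K}\Gamma(z)$ with $\sum|I|^{t}\le 1$. Pigeonholing produces a single scale $\delta$, a multiplicity level $2^{-j}\gtrapprox\delta^{t}$, and a $\sigma$-large family of lines $L$ on which many intervals $I\in\calI_{L}$ satisfy $\mu(\Gamma^{-1}\{I\})\approx 2^{-j}$. A Fubini swap then locates a single point $z_{0}$ in the good set $G$ of Lemma~\ref{incidenceSineCurves} whose wave $\Gamma(z_{0})$ meets $\gtrapprox\delta^{-\alpha}$ such intervals in $\delta$-separated vertical tubes. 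Each of these forces $m^{\mu}_{5\delta}\gtrapprox\delta^{t}$ on a $\delta^{2}$-portion of $\Gamma^{\delta}(z_{0})$, so the high-multiplicity region on $\Gamma^{\delta}(z_{0})$ has measure $\gtrapprox\delta^{2-\alpha}$, contradicting Lemma~\ref{incidenceSineCurves} with $\lambda=\delta^{1-\alpha+\eta}$ exactly when $\alpha>(s+t)/(2s)$. The covering information from $\calL_{t}$---which encodes that $\nu$ lives on the exceptional set---is used all the way to the end; that is what your energy reduction throws away.
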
 

Note that in Theorem \ref{mainExc}, we can assume without loss of generality that $K \subset \mathbf{B}_{0} $, where $\mathbf{B}_{0}$ is defined in Definition \ref{B0}.  Indeed, for any $\epsilon > 0$, we may find $z_{\epsilon} \in \R^{3}$ such that $\Hd [(K + z_{\epsilon}) \cap \mathbf{B}_{0}] \geq \Hd K - \epsilon$. Then we just observe that $\Hd \rho_{\theta}(K) = \Hd \rho_{\theta}(K + z_{\epsilon})$ for all $\theta \in [0,2\pi)$, by the linearity of $\rho_{\theta}$. With this reduction in mind (and recalling \eqref{slicesEq}), Theorem \ref{mainExc} follows immediately from the next result:

\begin{thm}\label{mainint}
  Let $K \subset \mathbf{B}_{0}$ be an analytic set with $\Hd K \leq 1$ and let $0 \leq t < \Hd K$. Let $\calL_{t}$ be the set of all vertical lines $L_{\theta} := \{(\theta,y) : y \in \R\} \subset \R^{2}$, $\theta \in [0,2\pi)$, such that
  \begin{displaymath}
    \mathcal{H}^{t} \left( L_{\theta} \cap \bigcup_{z \in K} \Gamma(z) \right) = 0.
  \end{displaymath}
  Then 
  \begin{equation}\label{form111}
    \Hd \calL_{t} \leq \frac{\Hd K + t}{2\Hd K},
  \end{equation}
  where $\Hd \calL_{t}$ is the Hausdorff dimension of $\{\theta \in [0,2\pi) : L_{\theta} \in \mathcal{L}_{t}\}$. 
\end{thm}

\begin{proof} It is sufficient to show that $\Hd \{\theta \in I : L_{\theta} \in \mathcal{L}_{t}\} \leq \tfrac{1}{2} (\Hd K + t)/\Hd K$ for every "short enough" sub-interval $I \subset [0,2\pi)$ separately. This observation will be used when we apply Lemma \ref{incidenceSineCurves} below: in the statement of the lemma, the set "$\Gamma^{\delta}(z)$" is defined relative to any compact interval $\tfrac{J}{2} \subset [0,2\pi)$, which is sufficiently short that Lemma \ref{Edelta} can be applied.  So, we let $J \subset [0,2\pi)$ be any compact interval such that Lemma \ref{incidenceSineCurves} applies with the definition $\Gamma^{\delta}(z) = \{(\theta,\theta') \in \tfrac{J}{2} \times \R : |\theta' - \rho_{\theta}(z)| \leq \delta\}$, $z \in \R^{3}$.

As a second reduction, we may assume that $K$ is compact: by a result of Davies \cite[Corollary 2]{MR0053184}, the analytic set $K \subset \R^{3}$ contains a compact subset of every dimension strictly smaller than $\Hd K$, and the bound \eqref{form111} is a continuous function of $\Hd K$.

  Fix $t < s < \Hd K$ and use Frostman's lemma to choose a probability measure $\mu$ with $\spt \mu \subset K$, such that $\mu(B(z,r)) \leq \mathbf{C}r^s$ for all balls $B(z,r) \subset \R^{3}$, and for some constant $\mathbf{C} \geq 1$. We make the counter assumption that
  \begin{displaymath}
    \Hd \{\theta \in \tfrac{J}{2} : L_{\theta} \in \mathcal{L}_{t}\} > \alpha > \frac{s + t}{2s},
  \end{displaymath}
  and we choose a Radon probability measure $\sigma$, supported on $\Theta_{t} := \{\theta \in \tfrac{J}{2} : L_{\theta} \in \mathcal{L}_{t}\}$, with $\sigma(B(\theta,r)) \lesssim r^{\alpha}$. The use of Frostman's lemma is legitimate here, since 
  \begin{displaymath} \Theta_{t} = \Big\{\theta \in \tfrac{J}{2} : \mathcal{H}^{t}(L_{\theta} \cap \bigcup_{z \in K} \Gamma(z)) = 0 \Big\} = \bigcap_{\epsilon > 0} \Big\{\theta \in \tfrac{J}{2} : \mathcal{H}^{t}_{\infty}(L_{\theta} \cap \bigcup_{z \in K} \Gamma(z)) < \epsilon\Big\} \end{displaymath}
  is a $G_{\delta}$-set, using the assumption that $K$ is compact (this implies that $L_{\theta} \cap \bigcup_{z \in K} \Gamma(z)$ is also compact for every $\theta \in \tfrac{J}{2}$).
  
By definition of $\calL_{t}$, for every $\theta \in \Theta_{t}$, hence $L_{\theta} \in \calL_{t}$, we may find a collection of arbitrarily short dyadic intervals $\calI_{\theta}$ on $L_{\theta}$, say shorter than $2^{-k_{0}}$, with the following properties:
  \begin{itemize}
    \item[(i)] $L_{\theta} \cap  \bigcup_{z \in K} \Gamma(z) \subset \bigcup_{I \in \calI_{\theta}} I$,
    \item[(ii)] $\sum_{I \in \calI_{\theta}} |I|^{t} \leq 1$.
  \end{itemize}
  The constant $k_{0} \in \N$ will eventually be chosen large in a manner depending only on $\alpha,s,t,\mathbf{C}$. If $\calI_{\theta}' \subset \calI_{\theta}$ is any sub-family, write $\Gamma^{-1}(\calI_{\theta}') \subset \R^{3}$ for all the points $z \in \R^{3}$ such that the point $\Gamma(z) \cap L_{\theta}$ is covered by the intervals in $\calI_{\theta}'$:
  \begin{equation}\label{gamma1}
    \Gamma^{-1}(\calI_{\theta}') := \Big\{z \in \R^{3} : \{\Gamma(z) \cap L_{\theta}\} \subset \bigcup_{I \in \calI_{\theta}'} I \Big\}.
  \end{equation}
  This is a convenient abuse of notation: for instance, now (i) simply states that $\Gamma^{-1}(\calI_{\theta}) \supset K$, and so $\mu(\Gamma^{-1}(\calI_{\theta})) = 1$. For $k \geq 0$, let $\calI_{\theta}^{k}$ be the sub-family of dyadic intervals in $\calI_{\theta}$ with side-length $2^{-k}$, so that $\calI_{\theta} = \bigcup_{k \geq k_{0}} \calI_{\theta}^{k}$. Consequently,
  \begin{displaymath}
    1 = \sigma(\Theta_{t}) = \int_{\Theta_{t}} \mu(\Gamma^{-1}(\calI_{\theta})) \dd\sigma(\theta) \leq \sum_{k \geq k_{0}} \int_{\Theta_{t}} \mu(\Gamma^{-1}(\calI_{\theta}^{k})) \dd\sigma(\theta).
  \end{displaymath}
  It follows that there exists $k \geq k_{0}$ such that
  \begin{equation}\label{form8}
    \int_{\Theta_{t}} \mu(\Gamma^{-1}(\calI_{\theta}^{k})) \dd\sigma(\theta) \gtrsim \frac{1}{k^{2}}.
  \end{equation}
  Write $\delta := 2^{-k}$, so that $k = \log(1/\delta)$. We infer from \eqref{form8} that there exists a subset $\Theta \subset \Theta_{t}$ with $\sigma(\Theta) \gtrsim \log^{-2}(1/\delta)$ such that $\mu(\Gamma^{-1}(\calI_{\theta}^{k})) \gtrsim \log^{-2}(1/\delta)$ for all $\theta \in \Theta$.

 Fix $\theta \in \Theta$. For $j \geq 0$, let $\calI_{\theta}^{k,j}$ consist of those intervals $I \in \calI_{\theta}^{k}$ such that $2^{-j - 1} < \mu(\Gamma^{-1}\{I\}) \leq 2^{-j}$. Then
  \begin{displaymath}
    \log^{-2}(1/\delta) \lesssim \mu(\Gamma^{-1}(\calI_{\theta}^{k})) \leq \sum_{j \geq 0} \mu(\Gamma^{-1}(\calI_{\theta}^{k,j})),
  \end{displaymath}
  so there exists $j = j_{\theta} \geq 0$ such that 
  \begin{equation}\label{form10}
    \mu(\Gamma^{-1}(\calI^{k,j}_{\theta})) \gtrsim \frac{1}{j^{2}\log^{2}(1/\delta)}.
  \end{equation}
  Using (ii), we can estimate
  \begin{displaymath}
    \frac{1}{j^{2}\log^{2}(1/\delta)} \lesssim \mu(\Gamma^{-1}(\calI_{\theta}^{k,j})) \leq \sum_{I \in \calI_{\theta}^{k,j}} \mu(\Gamma^{-1}\{I\}) \leq |\calI_{\theta}^{k}|2^{-j} \leq \delta^{-t}2^{-j},
  \end{displaymath}
  which gives 
  \begin{equation}\label{2j-estimate}
    j^{2}2^{-j} \gtrsim \delta^{t}/\log^{2}(1/\delta).
  \end{equation}
  In particular, this implies that $2^{j} \lesssim \delta^{-1}$, so $j \lesssim \log(1/\delta)$, and we can replace \eqref{form10} and \eqref{2j-estimate} by the slightly tidier estimates
  \begin{equation}\label{form9}
    \mu(\Gamma^{-1}(\calI_{\theta}^{k,j})) \gtrsim \frac{1}{\log^{4}(1/\delta)} \quad \text{and} \quad 2^{-j} \gtrsim \frac{\delta^{t}}{\log^{4}(1/\delta)}.
  \end{equation}
  Now, fix $\eta > 0$ so small that 
  \begin{equation}\label{def:eta}
    0 < \eta < \frac{2s\alpha - s - t}{3s}
  \end{equation}
  (note that the right hand side is positive by the relation between $\alpha,s,t$, and the choice of $\eta$ only depends on these parameters), and apply Lemma \ref{incidenceSineCurves} at scale $5\delta$ with this $\eta > 0$, 
  \begin{equation*}
    \lambda =\delta^{1 - \alpha + \eta}, \quad \text{and} \quad A=C_{\eta,\mathbf{C},s} \cdot \delta^{-\eta},
  \end{equation*}
  where $C_{\eta,\mathbf{C},s} = C_{\alpha,s,t,\mathbf{C}} \geq 1$ is the large constant specified in Lemma \ref{incidenceSineCurves}. The output is a subset $G=G(A,\delta,\lambda) \subset K$ with $\mu(K \setminus G) \leq (C_{\eta,\mathbf{C},s})^{-s/3} \cdot \delta^{\eta s/3}$, and such that 
  \begin{equation}\label{form3}
    |\Gamma^{5\delta}(z) \cap \{w:m^{\mu}_{5\delta}(w) \geq (C_{\eta,\mathbf{C},s})^{s} \delta^{s(2\alpha - 1 - 3\eta)}\}| \leq \lambda |\Gamma^{5\delta}(z)|, \qquad z \in G.
  \end{equation}
  Using the first estimate in \eqref{form9}, we obtain
  \begin{displaymath}
    \frac{1}{\log^{4}(1/\delta)} \lesssim \mu(\Gamma^{-1}(\calI_{\theta}^{k,j})) \leq \mu(\Gamma^{-1}(\mathcal{I}_{\theta}^{k,j}) \cap G) + \mu(K \setminus G), \qquad \theta \in \Theta,
  \end{displaymath}
  which combined with $\mu(K \setminus G) \leq  (C_{\eta,\mathbf{C},s})^{-s/3} \cdot \delta^{\eta s/3}$ gives
  \begin{displaymath}
    \mu(\Gamma^{-1}(\calI_{\theta}^{k,j}) \cap G) \gtrsim \frac{1}{\log^{4}(1/\delta)}, \qquad \theta \in \Theta,
  \end{displaymath}
  for small enough $\delta > 0$. Writing
  \begin{displaymath}
    I^{k,j}_{\theta} := \bigcup_{I \in \calI_{\theta}^{k,j}} I \subset L_{\theta}
  \end{displaymath}
  and recalling that $\sigma(\Theta) \gtrsim \log^{-2}(1/\delta)$, it follows that
  \begin{displaymath}
    \frac{1}{\log^{6}(1/\delta)} \lesssim \int_{\Theta} \mu(\Gamma^{-1}(\calI_{\theta}^{k,j}) \cap G) \dd\sigma(\theta) \leq \int_{G} \sigma(\{\theta \in \Theta : \{\Gamma(z) \cap L_{\theta}\} \subset I^{k,j}_{\theta}\}) \dd\mu(z),
  \end{displaymath}
  which implies the existence of $z_{0} \in G$ with
  \begin{equation}\label{form12}
    \sigma(\{\theta \in \Theta : \{\Gamma(z_{0}) \cap L_{\theta}\} \subset I_{\theta}^{k,j}\}) \gtrsim \frac{1}{\log^{6}(1/\delta)}.
  \end{equation}
  For $\theta \in \Theta$, let $I_{\theta}^{0}  \subset L_{\theta}$ be the unique dyadic $\delta$-interval containing the intersection point $\Gamma(z_{0}) \cap L_{\theta}$; in other words, the estimate \eqref{form12} then says that $I_{\theta}^{0} \in \calI_{\theta}^{k,j}$ for many parameters $\theta \in \Theta$. Let us make this more precise. Since $\sigma(B(x,r)) \lesssim r^{\alpha}$, the lower bound in \eqref{form12} implies that it takes $\gtrapprox \delta^{-\alpha}$ balls of radius $\delta$ to cover the set on the left hand side of \eqref{form12}. In other words, there exist at least $M \gtrapprox \delta^{-\alpha}$ disjoint intervals $I_{1},\ldots,I_{M} \subset \R$ of length $\delta$ such that, for each $1 \leq i \leq M$, the $\delta$-tube $T_{i} := I_{i} \times \R$ contains a segment $I^{0}_{\theta_{i}} \in \calI_{L}^{k,j}$; see Figure \ref{fig3} for illustration.

   \begin{figure}[t!]
  \begin{center}
    \begin{tikzpicture}[scale=0.7]
      \draw (0,7) -- (0,0) -- (1,0) -- (1,7);
      \node[align=center] at (0.5,-0.5) {$I_1$};
      \node[align=center] at (0.5,7) {$T_1$};
      \draw (0.3,3.5) -- (0.3,5);
      \node[align=center] at (1.8,5) {$I^0_{\theta_{1}}$};
      \draw [->] (1.3,5) arc (100:135:1.5);

      \draw (3,7) -- (3,0) -- (4,0) -- (4,7);
      \node[align=center] at (3.5,-0.5) {$I_2$};
      \node[align=center] at (3.5,7) {$T_2$};
      \draw (3.6,2.5) -- (3.6,4);
      \node[align=center] at (5.1,4) {$I^0_{\theta_{2}}$};
      \draw [->] (4.6,4) arc (100:135:1.5);

      \draw (7,7) -- (7,0) -- (8,0) -- (8,7);
      \node[align=center] at (7.5,-0.5) {$I_3$};
      \node[align=center] at (7.5,7) {$T_3$};
      \draw (7.7,4) -- (7.7,5.5);
      \node[align=center] at (6.2,5.5) {$I^0_{\theta_{3}}$};
      \draw [->] (6.7,5.5) arc (80:45:1.5);

      \draw (9,7) -- (9,0) -- (10,0) -- (10,7);
      \node[align=center] at (9.5,-0.5) {$I_4$};
      \node[align=center] at (9.5,7) {$T_4$};
      \draw (9.4,1.5) -- (9.4,3);
      \node[align=center] at (10.9,3) {$I^0_{\theta_{4}}$};
      \draw [->] (10.4,3) arc (100:135:1.5);

      \node[align=center] at (12.5,3.5) {$\cdots$};

      \draw (15,7) -- (15,0) -- (16,0) -- (16,7);
      \node[align=center] at (15.5,-0.5) {$I_M$};
      \node[align=center] at (15.5,7) {$T_M$};
      \draw (15.5,4) -- (15.5,5.5);
      \node[align=center] at (13.9,5.5) {$I^0_{\theta_{M}}$};
      \draw [->] (14.5,5.5) arc (80:45:1.5);
    \end{tikzpicture}
    \caption{An illustration for the proof of Theorem \ref{mainint}.}
    \label{fig3}
  \end{center}
  \end{figure}
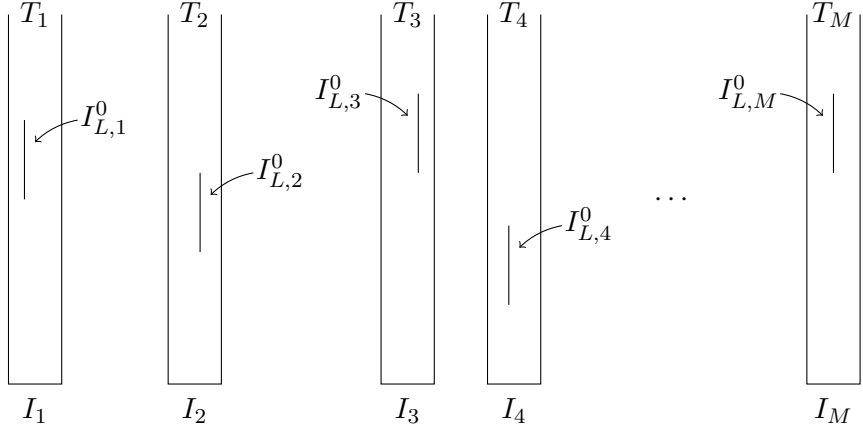

  Finally, recall that
  \begin{displaymath}
    m^{\mu}_{5\delta}(w) := \mu(\{z' \in \R^{3} : w \in \Gamma^{5\delta}(z')\}).
  \end{displaymath}
  A basic observation is the following: if $\theta \in \Theta \subset \tfrac{J}{2}$ and $I \subset L_{\theta}$ is a vertical segment of length $\delta$ (in particular $I_{\theta_{i}}^{0}$ for some $i$), and 
  \begin{displaymath} w \in I(\delta) := \{w' \in \R^{2} : \dist(w',I) \leq \delta\}, \end{displaymath}
  then $w \in \Gamma^{5\delta}(z)$ for all $z \in \Gamma^{-1}\{I\}$. Indeed, if $z \in \Gamma^{-1}\{I\}$, then $\{ \Gamma(z) \cap L_{\theta}\} \subset I$. Moreover, $\{ \Gamma(z) \cap L_{\theta}\} = (\theta, \rho_\theta(z))$ for some $\theta \in \tfrac{J}{2}$. Thus, writing $w=(w_{1},w_{2}) \in I(\delta)$, we have $|w_{1} - \theta| \leq \delta$ and $|w_{2} - \rho_{\theta}(z)| \leq 2\delta$, and hence
  \begin{align*}
    |w_{2} -\rho_{w_{1}}(z)| \le |w_{2} - \rho_{\theta}(z)| + |\rho_{\theta}(z) - \rho_{w_{1}}(z)| \leq 5\delta.
  \end{align*}
  As a consequence,
  \begin{displaymath}
    m^{\mu}_{5\delta}(w) \geq \mu(\Gamma^{-1}\{I\}), \qquad w \in I(\delta),
  \end{displaymath}
  and in particular
  \begin{equation}\label{form11}
    m_{i} := |T_{i} \cap \Gamma^{\delta}(z_{0}) \cap \{w : m^{\mu}_{5\delta}(w) \geq \mu(\Gamma^{-1}\{I_{\theta_{i}}^{0}\})\}| \geq |T_{i} \cap \Gamma^{\delta}(z_{0}) \cap I_{\theta_{i}}^{0}(\delta)| \sim \delta^{2}.
  \end{equation}
  Next, recall that
  \begin{displaymath}
    \mu(\Gamma^{-1}\{I_{\theta_{i}}^{0}\}) \sim 2^{-j} \gtrsim \frac{\delta^{t}}{\log^{4}(1/\delta)}, \qquad 1 \leq i \leq M
  \end{displaymath}
  by the second estimate in \eqref{form9}, since $I_{\theta_{i}}^{0} \in \calI_{\theta}^{k,j}$. If $\delta > 0$ is sufficiently small, depending on $\alpha,s,t,\mathbf{C}$ (this can be arranged by choosing $k_{0} \in \N$ large enough to begin with, and recalling that $\delta = 2^{-k} \leq 2^{k_{0}}$) the right hand side exceeds $C_{\eta,\mathbf{C},s}^{s} \delta^{s(2\alpha - 1 - 3\eta)}$, by the choice of $\eta$, recall \eqref{def:eta}. By \eqref{form11} and the disjointness of the vertical tubes $T_{i}$, this means that
  \begin{displaymath}
    |\Gamma^{\delta}(z_{0}) \cap \{w : m_{5\delta}^{\mu}(w) \geq C^{s}_{\eta,\mathbf{C},s} \delta^{s(2\alpha - 1 - 3\eta)}\}| \geq \sum_{i = 1}^{M} m_{i} \gtrapprox \delta^{2 - \alpha} \sim \delta^{-\eta} \lambda |\Gamma^{5\delta}(z_{0})|.
  \end{displaymath} 
  Since $\eta > 0$ and $z_{0} \in G$, this contradicts \eqref{form3} for sufficiently small $\delta > 0$. The proof is complete.
\end{proof}

With the same argument we can also prove the following lemma about circles, which will then imply Theorem \ref{circleUnion}.

\begin{lemma}\label{circleSlices}
  Let $K \subset \mathbf{B}_{0}$ be an analytic set and let $\calL$ be a set of vertical lines $L_{\theta} = \{\theta\} \times \R$ with $-\tfrac{1}{4} \le \theta \le \tfrac{1}{4}$ such that
  \begin{displaymath}
    \Hd \left( L_{\theta} \cap \bigcup_{z \in K} S(z) \right) < \min \{ \Hd K,1\}.
  \end{displaymath}
  Then $\{\theta \in [-\tfrac{1}{4},\tfrac{1}{4}] : L_{\theta} \in \mathcal{L}\} = 0$. 
\end{lemma}

\begin{proof}
  We may assume that $0 < \Hd K \leq 1$. Fix $0 < t < s < \Hd K$, and pick a probability measure $\mu$ with $\spt \mu \subset K$ and $\mu(B(z,r)) \lesssim r^{s}$. The previous proof can be used to show that $\Hd \calL_{t} \leq (\Hd K + t)/(2\Hd K) < 1$, where $\calL_{t} \subset \calL$ is the collection of those lines $L_{\theta}$ with 
  \begin{displaymath}
    \calH^{t}\left(L_{\theta} \cap \bigcup_{z \in K} S_{+}(z) \right) = 0,
  \end{displaymath}
  and $S_{+}(z)$ is the upper half of the circle $S(z)$. Lemma \ref{circleSlices} is evidently a corollary of this statement, so we only need to indicate the proof of that statement. First note that since we consider only those vertical lines $L_{\theta}$ with $-1/4\le \theta \le 1/4$, they intersect every half-circle $S_{+}(z)$ with $z \in K$ exactly once. This is due to the fact that $K \subset \mathbf{B}_{0}$, thus the centre of any circle $S(z)$ lies in $B(0,\tfrac{1}{2})$, and the radius is at least $1/2$.

  In analogy with the proof of Theorem \ref{mainint}, we can define $S^{-1}_+(\calI'_L)$ for any family of intervals $\calI'_L$ as was done in \eqref{gamma1} for $\Gamma^{-1}(\calI'_L)$. Instead of Lemma \ref{incidenceSineCurves}, we now use its corresponding version for circles, Lemma \ref{incidenceCircles}. As we are using half circles, we need to modify the multiplicity function as well, so instead of $m^\mu_\delta$, which was defined for circles in \eqref{multcircles}, we define it for half circles as
  \begin{displaymath}
    m^\mu_{\delta,+}(w)=\{ z' \in \R^3: w \in S^\delta_+(z')\},
  \end{displaymath}
  where $S^\delta_+(z)$ is the $\delta$ neighbourhood of $S_+(z)$. Since $m^\mu_{\delta,+}(w) \le m^\mu_\delta(w)$ for every $w \in \R^2$, it follows that the conclusion of Lemma \ref{incidenceCircles} holds still true when $m^\mu_\delta$ is replaced by $m^\mu_{\delta,+}$. In particular, with the same parameters $A,\alpha,\eta,\mathbf{C},\lambda,s,t$ as in \eqref{form3}, we can find a subset $G \subset K$ with $\mu(K \setminus G) \le C_{\eta,\mathbf{C},s}^{-s/3}\delta^{\eta s/3}$ such that for every $z \in G$,
  \begin{displaymath}
    |S^{5\delta}_+(z) \cap \{w: m^\mu_{5\delta,+}(w) \ge C_{\eta}^{s} \delta^{s(2\alpha - 1 - 3\eta)} \}| \le \lambda |S^{5\delta}_+(z)|.
  \end{displaymath}
  From this point on, the proof is exactly the same as that of Theorem \ref{mainint}. Note that the "basic observation" between \eqref{form12} and \eqref{form11} is still valid: if $L \in \calL$ and $I \subset L$ is a vertical segment of length $\delta$ and $w \in I(\delta)$, then $w \in S_+^{5\delta}(z)$ for every $z \in S^{-1}_+\{I\}$. Indeed, for every $z \in S^{-1}_+\{I\}$ we have $ \{S_+(z) \cap L\} \subset I$, that is $|w- \{S_+(z) \cap L\}| \le 5\delta$, which implies $w \in S^{5\delta}_+(z)$.
\end{proof}

Lemma \ref{circleSlices} implies Theorem \ref{circleUnion}, which we restate here. Recall that $\Hd \calS := \Hd \{z \in \R^{3} : S(z) \in \calS\}$ and $\cup\calS = \bigcup_{S \in \calS} S$.

\begin{thm}
  If $\calS$ is an analytic family of circles, then $\Hd \cup \calS = \min\{\Hd \calS+1,2\}$.
\end{thm}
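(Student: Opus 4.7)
The plan is to split the argument into an upper bound and a lower bound. The upper bound $\Hd \cup \calS \leq \min\{\Hd \calS + 1, 2\}$ is standard. Writing $K := \{z \in \R^3 : S(z) \in \calS\}$, so that $\Hd K = \Hd \calS$, the map $((x,r),\phi) \mapsto x + r(\cos \phi, \sin \phi)$ is locally Lipschitz from $K \times [0, 2\pi)$ to $\R^2$ and surjects onto $\cup \calS$. Exhausting $K$ by compact subsets (on each of which this map is genuinely Lipschitz, with constant controlled by the diameter) gives $\Hd \cup \calS \leq \Hd K + 1$; the trivial bound $\Hd \cup \calS \leq 2$ then delivers the full upper bound.

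For the lower bound, I would first reduce to the case $K \subset B_0$. Any point $(x, r) \in \R^2 \times (0, \infty)$ can be sent into $B_0$ by an affine map of the form $(y, \rho) \mapsto (2^{-k}(y - y_0), 2^{-k}\rho)$ for suitable $k \in \Z$ and $y_0 \in \R^2$; on the level of the associated planar circles this corresponds to translating and rescaling $\R^2$, which preserves the Hausdorff dimension of both $K$ and of $\cup \calS$. Covering $\R^2 \times (0,\infty)$ by countably many such "unit regions" and using the countable stability of Hausdorff dimension reduces the problem to $K \subset B_0$.

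With this reduction in place, Lemma \ref{circleSlices} asserts that for Lebesgue almost every $\theta \in [-1/4, 1/4]$, the vertical line $L_{\theta} = \{\theta\} \times \R$ satisfies
\begin{displaymath}
  \Hd\Big( L_{\theta} \cap \bigcup_{z \in K} S(z) \Big) \geq \min\{\Hd K, 1\}.
\end{displaymath}
To conclude, I would invoke the standard slicing lower bound for Hausdorff dimension: if $U \subset \R^2$ is Borel and the set $\{\theta : \Hd(L_{\theta} \cap U) \geq \tau\}$ has positive $\calH^1$-measure, then $\Hd U \geq \tau + 1$ (a classical consequence of Frostman's lemma and Fubini applied to a suitable energy integral). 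Applying this to $U = \cup \calS$ with $\tau = \min\{\Hd K, 1\}$ yields
\begin{displaymath}
  \Hd \cup \calS \geq \min\{\Hd K, 1\} + 1 = \min\{\Hd \calS + 1, 2\},
\end{displaymath}
matching the upper bound.

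The genuine obstacle in this argument is the slicing statement about vertical intersections of the union $\bigcup_{z \in K} S(z)$, namely Lemma \ref{circleSlices}, which itself relies on the incidence/tangency machinery developed earlier in the paper. Once that lemma is at our disposal, the passage from "dimension of slices" to "dimension of the union" via the slicing inequality is routine, and the initial reductions (exhaustion by compacts, localization to $B_0$) are entirely standard.
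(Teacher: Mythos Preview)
Your proposal is correct and follows essentially the same route as the paper's proof: reduce to Lemma \ref{circleSlices} for the slice dimensions, then invoke the standard slicing lower bound (the paper cites \cite[Theorem 5.8]{Fa}) to pass from slices to the union, with the upper bound handled by a covering/Lipschitz argument. If anything, your write-up is slightly more careful, since you explicitly carry out the localisation to $K \subset B_{0}$ required by Lemma \ref{circleSlices}, which the paper's proof leaves implicit.
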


\begin{proof}
  Fix $0 \leq t < \min\{\Hd \calS,1\}$. By Lemma \ref{circleSlices}, for almost every $\theta \in [-\tfrac{1}{4},\tfrac{1}{4}]$, the vertical line $L_{\theta} = \{\theta\} \times \R$ satisfies
  \begin{displaymath}
    \Hd [\cup \calS \cap L_{\theta}] \geq t.
  \end{displaymath}
  Hence, by \cite[Theorem 5.8]{Fa}, we have $\Hd \cup \calS \geq t + 1$, and the lower bound of the theorem now follows by letting $t \uparrow \min\{\Hd \calS,1\}$. The upper bound follows by a standard covering argument, and we omit the details.
\end{proof}

\bibliographystyle{plain}
\bibliography{references}

\end{document}